\newtheorem{theorem}{Theorem}
\newtheorem{proposition}[theorem]{Proposition}
\newtheorem{lemma}[theorem]{Lemma}
\newtheorem{corollary}[theorem]{Corollary}
\numberwithin{theorem}{section}
\newcommand{\T}{\mathbb{T}}
\newcommand{\R}{\mathbb{R}}
\newcommand{\Z}{\mathbb{Z}}
\newcommand{\C}{\mathbb{C}}
\newcommand{\N}{\mathbb{N}}
\newcommand{\K}{\mathcal{K}}
\renewcommand{\L}{\mathcal{L}}
\newcommand{\V}{\mathcal{V}}
\newcommand{\U}{\mathcal{U}}
\newcommand{\bV}{{\bar{\mathcal{V}}}}
\newcommand{\bU}{{\bar{\mathcal{U}}}}
\newcommand{\bc}{{\bar {c}}}
\newcommand{\M}{\mathcal{M}}
\newcommand{\D}{\mathcal{D}}
\newcommand{\Hd}{\mathcal{H}}
\newcommand{\Ad}{\mathcal{A}}
\newcommand{\F}{\mathcal{F}}
\renewcommand{\P}{\mathcal{P}}
\newcommand{\md}{h}
\newcommand{\dd}{\mathrm{d}}
\newcommand{\half}{\tfrac{1}{2}}
\DeclareMathOperator{\Leb}{Leb}
\DeclareMathOperator{\esssup}{ess-sup}
\DeclareMathOperator{\spn}{span}
\DeclareMathOperator{\sign}{sign}
\DeclareMathOperator{\sech}{sech}
\DeclareMathOperator{\im}{im}
\def\XXint#1#2#3{{\setbox0=\hbox{$#1{#2#3}{\int}$}
		\vcenter{\hbox{$#2#3$}}\kern-.5\wd0}}
\newcommand{\rev}[1]{#1}
\author{Caroline Wormell}
\title{Orthogonal polynomial approximation and Extended Dynamic Mode Decomposition in chaos}
\begin{document}
	\maketitle

	\begin{abstract}
Extended Dynamic Mode Decomposition (EDMD) is a data-driven tool for forecasting and model reduction of dynamics, which has been extensively taken up in the physical sciences. While the method is conceptually simple, in deterministic chaos it is unclear what its properties are or even what it converges to. In particular, it is not clear how EDMD's least-squares approximation treats the classes of differentiable functions on which chaotic systems act.

We develop for the first time a general, rigorous theory of EDMD on the simplest examples of chaotic maps: analytic expanding maps of the circle. To do this, we prove a new, basic approximation result in the theory of orthogonal polynomials on the unit circle (OPUC) and apply methods from transfer operator theory. We show that in the infinite-data limit, the least-squares projection error is exponentially small for trigonometric polynomial observable dictionaries. As a result, we show that forecasts and Koopman spectral data produced using EDMD in this setting converge to the physically meaningful limits, exponentially fast with respect to the size of the dictionary.
This demonstrates that with only a relatively small polynomial dictionary, EDMD can be very effective, even when the sampling measure is not uniform. Furthermore, our OPUC result suggests that data-based least-squares projection may be a very effective approximation strategy more generally.
	\end{abstract}

\section{Introduction}
%Often trying to approximate/compress operators on a sample from data. One example is linear operators encoding action of dynamics -> EDMD
%
%Natural choice is least squares approximation
%Usually with respect to data empirical measure = asymptotically from measure $\mu$ 
%
%However, in many cases establishing convergence of these operators to some infinite-dimensional limit requires establising the stability of the desired operator under discretisation (i.e. the least-squares projection). Often

% MOTIVATION FOR DMD/FORECASTING 
Nonlinear systems with complex dynamics are found across the physical and human sciences, counting among them climate systems, fluid flows and economic systems \cite{Fuchs14,Brock18}. Unlike linear systems, they rarely admit useful analytic solutions or have much obvious mathematical structure that can be used to make sense of them. Furthermore, the information we may have on these systems can often be very limited: for example, we may only have access to some empirical observations of its evolution. General methods of studying nonlinear systems in terms of well-understood mathematical objects, that can be performed using empirical data, are therefore an important tool in scientific endeavour.

% IDEA OF DMD
One approach to do this is to use a surrogate linear system to study the nonlinear system. For example, one can study functions (``observables'') on the phase space, and construct a linear {\it Koopman operator} which maps observables to their forward expectations under the flow \cite{Budivsic12}. Computationally, only a finite-dimensional vector space of observables is considered: the Koopman operator may not preserve this space of observables, but can be projected onto the finite-dimensional space, with good results if the observables are well-chosen. This projection can be done by least-squares on the empirical data, making it widely applicable. The computational representation of the Koopman operator can then be studied in terms of its spectrum and eignefunctions, which describe dynamical properties such mixing rates, almost-invariant sets in phase space \cite{Froyland14, Colbrook21, Colbrook23}.

Many different numerical methods, usually described as Dynamic Mode Decomposition (DMD) variants, employ this approach: for example, the classic DMD uses linear functions of delay coordinates as its observables. The aim of Extended Dynamic Mode Decomposition (EDMD), however, is to choose a large ``dictionary'' of observables that can be effectively used to approximate any function on the phase space \cite{Williams15}: commonly, polynomials rather than simply linear functions are used. EDMD and its variants have recently been used to great success in a broad variety of settings, including in airflow, molecular dynamics, industrial processes, and decision networks \cite{Mauroy20, Colbrook22, Colbrook23, Son22}. 

% SPECTRAL THEORY HITHERTO, IN DMD AND IN TRANSFER OPERATOR THEORY
Given its applicability, there has been a lot of interest in the mathematical theory behind EDMD, and in particular the interpretation of its spectral data. This theory has largely been explored by considering the Koopman operator $\mathcal{K}\varphi = \varphi \circ f$ of the dynamics $f$ as acting on $L^2(\mu)$, where $\mu$ is the (often dynamically invariant) sampling measure of the empirical data points: the isometric nature of the Koopman operator on this space is exploited to obtain various spectral convergence results regarding the $L^2(\mu)$ spectrum, which lies on the unit circle for invertible dynamical systems, and in some non-invertible settings covers the unit \rev{disc} \cite{Keller89,Slipantschuk20}. However, the actual spectra of Koopman matrices obtained using EDMD bear much closer resemblance to the spectra of quasi-compact Markov operators \cite{Hennion01}, whose spectrum is mostly located inside the unit circle (see Figure~\ref{fig:circlemap} for an example).

The explanation for this can be found in the fact that the Koopman operator is the dual of the transfer operator \cite{Slipantschuk20,Bandtlow23}, a typically quasi-compact Markov operator that tracks the movement of probability densities under the dynamics, up to some reweighting. Transfer operators have been heavily studied in dynamical systems, and have a strong and well-developed functional-analytic theory, including numerical approximation of transfer operators \cite{Keller99, Dellnitz02, Wormell19, Bandtlow20}. Nevertheless, with some exceptions \cite{Froyland14, Slipantschuk20,Bandtlow23}), cross-pollination between the separate transfer operator and Koopman operator communities has been regrettably sparse.

Among the transfer operator discretisations, Extended DMD bears close resemblance to Fourier methods for transfer operators \cite{Wormell19, Crimmins20}, which make a standard orthogonal projection onto trigonometric polynomals with respect to Lebesgue measure. %(The same ideas extend to regular polynomials on the interval $[-1,1]$  with respect to the Jacobi weight $\tfrac{\dd x}{\sqrt{1-x^2}}$.) 
Proving the convergence of discretisations with respect to these projections can be done by using the orthogonality of derivatives of the basis functions, a property which has been used to prove convergence of EDMD in the instance when the sampling measure $\mu$ is uniformly distributed (i.e. exactly Lebesgue measure) \cite{Slipantschuk20}. It is very rare, however, for a sampling measure to have this property: in fact, only a few, famous, families of measures do \cite{Krall36}. For more general sampling measures $\mu$, new approximation theory must be developed to carry over transfer operator results to DMD variants: this is the advance of this paper.

% Introduce transfer operators (note that they are used in dynamical systems over Koopman operators for largely historical reasons, although conceptually they track different things)
% transfer operators have a lot of theory. spectra are meaningful. let's study

% MODEL
% Why circle maps? Why polynomials?
\subsection{Polynomial approximation result}
% PROOF TECHNIQUES.
% Used duality transfer operator / Koopman to hook into existing f. analysis theory (very well developed for circle maps). Slip, Bandt etc show that things work well when the measure is uniform (Lebesgue) because least squares projection plays nicely in Sobolev spaces, Hardy spaces, etc.
% But need to show that for other measures the projection onto observable functions used in EDMD behaves nicely.
%The overarching proof idea parallels that sketched in \cite{Slipantschuk20}. The first component is a study the transfer operator acting on a Hilbert space $\Hd^2_t$ of analytic functions: we have improved previous results by showing that the norm of the transfer operator in $\Hd^2_t$ is independent of the analyticity parameter $t$. 
%The overarching strategy to prove Theorem~\ref{t:SimpleSpectral} parallels that sketched in \cite{Slipantschuk20}, i.e. we study by duality a transfer operator acting on Hilbert spaces $\{\Hd^2_t\}_{t \in (0,\zeta]}$ of analytic functions, and consider the effect of the least squares projection onto the finite observable space \eqref{eq:ObsDict} in these Hilbert spaces.
%(here we have improved previous results by showing that the norm of the transfer operator in $\Hd^2_t$ is independent of the analyticity parameter $t$)

We do this by proving a basic and general result in orthogonal polynomial approximation (Theorem~\ref{t:GalerkinGood}), which to the best of our knowledge is new. We consider the operator $\mathcal{P}_K$  which accepts functions on the periodic interval $\T/2\pi\Z$ and outputs their $L^2(\mu)$-orthogonal projection onto trigonometric polynomials of degree $\leq K$, i.e. into the space
\[ \spn \{1, e^{-ix}, e^{ix}, e^{-2ix}, e^{2ix}, \ldots, e^{-i(K-1)x}, e^{i(K-1)x} \}. \]
In the special case where $\mu$ is uniform measure, $\P_K$ is just the Dirichlet kernel operator $\D_K$ (which truncates Fourier modes in the standard way):
\[ (\D_K \psi)(x) = \sum_{k=1-K}^{K-1} \frac{1}{2\pi} \int_0^{2\pi} e^{ik(x-y)} \psi(y)\,\dd y  \]

Our result is that, under some fairly weak stipulations on the density of $\mu$, the $L^2(\mu)$ projection $\P_K$ is about as good an approximation as the Dirichlet kernel (i.e. truncating Fourier modes), where the metric for ``good approximation'' is the operator error between Sobolev-style Hilbert spaces.%hat treats the least squares projection $\P_K$ in these Hilbert spaces in a much broader setting than other quantitative work \cite{Slipantschuk20,Bandtlow23}: in particular, we can study projections with respect to non-uniform sampling measures $\mu$, which is typical from time series data. 

To prove this result we bring in ideas from the theory of orthogonal polynomials on the unit circle (OPUC) \cite{Simon05}, which can be used to characterise bases of trigonometric polynomials in which the least-squares projection $\mathcal{P}_K$ is diagonal. The ideas in this body of theory allow one to relate this basis quite effectively to the usual monomial basis. 

Aiming to be as general as possible, we study the Hilbert spaces $W^\sigma := \mathcal{F}^{-1}[\sigma \ell^2(\Z)]$ weighted by so-called {\it Beurling weights}: even functions $\sigma:\Z \to \R^+$ that are non-decreasing on the natural numbers, and obey $\sigma(j)\sigma(k) \leq \sigma(|j|+|k|)$. Among these spaces include the usual integer and fractional Sobolev Hilbert spaces (up to norm equivalence), as well the Hardy-Hilbert spaces $\Hd^2_t$ we used to study EDMD (see Section~\ref{ss:MainResult}); when $\sigma^{-1}$ is square-summable, our spaces $W^\sigma$ are also examples of reproducing kernel Hilbert spaces \cite{Aronszajn50}.
\begin{theorem}\label{t:GalerkinGood}
	Suppose that $\sigma, \tau: \Z \to \R_+$ are Beurling weights with $\tau/\sigma$ decreasing on $\N$.
	
	Suppose furthermore that $\mu = \md \,\dd x$ is a positive measure on $\T$ with $M^{-1} \leq \md(x) \leq M$ on $\T$ and $\|\sigma \mathcal{F}[(\log \md)']\|_{\ell^q} \leq A$ for some $q<\infty$.
	
	Then there is a constant $C_{\P}$ increasing in and dependent only on $q, A, M$ such that
	\[ \|I - \mathcal{P}_K \|_{W^\sigma \to W^\tau} \leq C_{\P} \|I - \D_K\|_{W^\sigma \to W^\tau} = C_{\P} \frac{\tau(K)}{\sigma(K)}. \]
	where $\mathcal{P}_K$ is the $L^2(\mu)$-orthogonal projection onto trigonometric polynomials of degree less than $K$, and $\mathcal{D}_K$ is the Dirichlet kernel.
	%	Suppose $\mu = \md \,\dd x$ is a positive measure on $\T$ with an analytic extension to $\T_\zeta := \T + (-\zeta,\zeta)$ and  $(\log \md)' \in L^p(\partial\T_\zeta)$ for some $p>1$.
	%	
	%	Then there is a constant $C_\P$ depending only on $\zeta$, $M :\geq \|\log \mu\|_{L^\infty(\T_\zeta)}$, $p$, $A :\geq \|(\log \md)'\|_{L^p(\partial \T_\zeta)}$ such that for all $K\in\N$,
	%	\[ \| I - \P_K \|_{\Hd^2_t \to \Hd^2_s}  \leq C_{\P} \|I - \D_K\|_{\Hd^2_t \to \Hd^2_s}  \leq 2C_{\P}e^{-K(t-s)}. \]
	%	where $\mathcal{P}_n$ is the orthogonal projection onto trigonometric polynomials of degree $<n$ in $L^2(\mu)$, and $\mathcal{D}_n$ is the Dirichlet kernel.
\end{theorem}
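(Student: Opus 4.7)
The plan is to prove Theorem~\ref{t:GalerkinGood} by combining three standard ingredients: the quasi-optimality of $\P_K$ in $L^2$, the $L^2$-error bound for $\D_K$ on $W^\sigma$, and the inverse inequality on the finite-dimensional space of trigonometric polynomials of degree $<K$. Since $M^{-1}\leq \md\leq M$, the norms on $L^2(\mu)$ and $L^2(\dd x)$ are equivalent within a factor $\sqrt{M}$, which controls the deviation of $\P_K$ from the Fourier truncation $\D_K$ in $L^2$; the inverse inequality then transfers this bound to $W^\tau$ at the expected cost $\tau(K)$.

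Concretely, I would proceed in four short steps. \emph{(i)} Quasi-optimality in $L^2(\mu)$: since $\D_K f$ lies in the range of $\P_K$, $\|f-\P_K f\|_{L^2(\mu)}\leq \|f-\D_K f\|_{L^2(\mu)}$, and the $L^\infty$-bounds on $\md$ give $\|f-\P_K f\|_{L^2}\leq M\|f-\D_K f\|_{L^2}$. \emph{(ii)} Standard Sobolev truncation: monotonicity of $\sigma$ on $\N$ yields $\|f-\D_K f\|_{L^2}\leq \sigma(K)^{-1}\|f\|_{W^\sigma}$; together with \emph{(i)} and the triangle inequality this gives $\|\P_K f-\D_K f\|_{L^2}\leq (M+1)\sigma(K)^{-1}\|f\|_{W^\sigma}$. \emph{(iii)} Inverse inequality: $\P_K f-\D_K f$ lies in the span of $\{e^{ikx}\}_{|k|<K}$ and $\tau$ is non-decreasing on $\N$, so $\|\P_K f-\D_K f\|_{W^\tau}\leq \tau(K-1)\|\P_K f-\D_K f\|_{L^2}\leq (M+1)\tau(K)\sigma(K)^{-1}\|f\|_{W^\sigma}$. \emph{(iv)} Adding the direct estimate $\|f-\D_K f\|_{W^\tau}\leq \tau(K)\sigma(K)^{-1}\|f\|_{W^\sigma}$, valid since $\tau/\sigma$ is non-increasing on $\N$, the triangle inequality yields Theorem~\ref{t:GalerkinGood} with the explicit constant $C_{\P}=M+2$.

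Notably, this argument does not exploit the smoothness hypothesis $\|\sigma\,\F[(\log \md)']\|_{\ell^q}\leq A$ at all, so the ``main obstacle'' for this simple proof is merely the bookkeeping above. The OPUC-based approach signalled in the introduction presumably targets a finer constant that becomes small as $\mu$ approaches the uniform measure (rather than the pessimistic $M+2$), by going through the Szeg\H{o} factorisation $\md=|D|^2$ and analysing $\P_K$ in a CMV-type orthonormal basis of $L^2(\mu)$. The real technical challenge of such a sophisticated proof would be to translate $\ell^q$-control on the Fourier coefficients of $(\log \md)'$ into sharp decay bounds on the Verblunsky coefficients and on the deviation of the reversed Szeg\H{o} polynomials $\varphi_n^*$ from the outer factor $D$, which is exactly where the $q$- and $A$-dependence of the theorem's constant would enter naturally.
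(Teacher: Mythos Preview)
Your argument is correct. The chain of inequalities in steps (i)--(iv) is sound: quasi-optimality of $\P_K$ in $L^2(\mu)$ together with the two-sided bound $M^{-1}\le \md\le M$ gives $\|f-\P_K f\|_{L^2}\le M\|f-\D_K f\|_{L^2}$; the Bernstein-type inverse estimate on trigonometric polynomials of degree $<K$ (using only that $\tau$ is non-decreasing) and the direct estimate for $\D_K$ (using that $\tau/\sigma$ is non-increasing) then combine via the triangle inequality to yield the bound with $C_\P=M+2$. The normalisation factors between $L^2(\dd x)$ and $W^1$ cancel, and the submultiplicativity of the Beurling weights is never invoked.

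This is a genuinely different and more elementary route than the paper's. The paper factorises $\P_K=\V^\dag\D_K\,\U$ via the Cholesky/OPUC decomposition of the multiplication operator $\M$, and the bulk of the work (Section~4, culminating in Theorem~\ref{t:CholeskyBound}) is devoted to proving that $\U$ and $\V^\dag$ are bounded on $W^\sigma$ and $W^\tau$ respectively; this is precisely where the smoothness hypothesis $\|\sigma\,\F[(\log\md)']\|_{\ell^q}\le A$ is consumed. Your proof bypasses all of this and, as you observe, needs only the $L^\infty$ bounds on $\md$. What the paper's approach buys is twofold: first, the boundedness of the Cholesky factors on $W^\sigma$ (Theorem~\ref{t:CholeskyBound}) is a stronger structural statement of independent interest, saying in effect that the $\mu$-orthogonal polynomial basis is uniformly equivalent to the Fourier basis in these weighted spaces; second, the resulting constant $C_\P=C_\triangle^2$ tends to $1$ as $\mu$ approaches Lebesgue measure (since then $\U,\V\to I$), whereas your constant $M+2$ only tends to $3$. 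For the theorem as stated, however, your argument suffices and is considerably shorter.
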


This theorem can be expected to generalise to higher dimensions, retaining for analytic sample measures an $\mathcal{O}(e^{-cK})$ convergence rate for a dictionary of degree $< K$ trigonometric polynomials.
%Our method of proof generalises readily to general Beurling weights in Fourier space \cite{Geronimo06}, which would also cover Sobolev spaces $W^{s,2}$. We will include this result in the next version of this preprint.

Our result suggests that if $\mu$ is smooth then approximation of smooth functions by polynomials in $L^2(\mu)$ is very powerful. As a result, in EDMD and beyond, we should be eager to make use of this kind of polynomial approximation where it arises, for example in least squares approximation from data.

\subsection{The dynamics picture}
The goal of this paper is to obtain some a priori knowledge about how EDMD performs on chaotic systems. However, as a deterministic chaotic system becomes more structurally complex, its dynamical systems theory quickly becomes very technically involved \cite{Blumenthal17}. For this reason, we choose as an initial example uniformly expanding maps of the torus $\T := \R / 2\pi \Z$: that is, maps $f: \T \to \T$ such that for all $\theta \in \T$, $\inf |f'(\theta)|\geq \gamma > 1$. An example of such a map is given in the top left of Figure~\ref{fig:circlemap}. These are common first examples for understanding phenomena in chaotic dynamics \cite{Baladi00,Wormell19}. We note that they are sometimes studied as self-maps of the complex unit circle in coordinates $z = e^{i\theta}$. Some additional smoothness is required to obtain good results: for simplicity, we will assume that $f$ is analytic. We will also assume that $\mu$ has analytic density, a reasonable assumption as the physical invariant measures of such maps have analytic densities: nevertheless our fundamental polynomial approximation result (Theorem~\ref{t:GalerkinGood}) applies to any regularity.

% AN ILLUSTRATIVE SPECTRAL APPROXIMATION THEOREM
The key classical dynamical systems result about these maps is this. Given observable functions in $C^\infty(\T)$ and $\psi \in L^1(\T)$ \rev{(or even in some larger space of hyper-distributions)}, we can expand their lag correlations as a sum of exponentially decaying functions:
\begin{equation} \int_\T \varphi\, \psi\circ f^n\, \dd \mu \sim \sum_{j=0}^\infty \sum_{m=1}^{M_j}   \alpha_{j}^{(m)}(\varphi) \beta_{j}^{(m)} (\psi)\, n^m \lambda_j^n \label{eq:RuellePollicott} \end{equation}
where $\alpha_{j}^{(m)} (\varphi) := \int a_{j}^{(m)} \varphi\,\dd\mu$ for some function $a_{j}^{(m)} \in \Hd^2_t$, and $\beta_{j}^{(m)} : \Hd^2_t \to \R$ is a bounded functional that can be written formally as $\beta_{j}^{(m)}(\psi) = \int \psi b_j^{(m)},\dd x$, for some generalised hyperdistribution $b_j^{(m)}$ in the space $\Hd^2_{-t}$ (defined in Section~\ref{s:Results}). The multiplicities $M_j$ are generically $1$, in which case we drop the superscripts. %so in this . Suppose for simplicity that all eigenvalues are simple (the general result is given in Corollary~\ref{c:Spectrum}). In this case,
%\begin{equation} \int_\T A\, B\circ f^n\, \dd \mu \sim \sum_{j=0}^\infty \alpha_j(B) \beta_j(A) \lambda_j^n \label{eq:RuellePollicott2} \end{equation}
%where $\alpha_j = \left(\int B a_j\,\dd\mu\right)$.

The complex $\lambda_j$, which have modulus no greater than one, are known in the dynamical systems literature as Ruelle-Pollicott resonances: they are a kind of canonical, function-space independent eigenvalue of a transfer operator of the system \cite{Baladi18}. In particular, they are canonical eigenvalues of a transfer operator that tracks movement of probability density weighted with respect to sampling measure $\mu$: when $\mu$ is the physical measure (i.e. long-term ergodic distribution) of the system, this is the Perron-Frobenius operator. 

These resonances and associated linear operators determine many important properties of the dynamical system. For example, for generalised eigenfunctions $a_{j}^{(m)}$ of an eigenfunction $\lambda_j$ close to $1$, the sets $\{x: a_j(x)>0\}$ denote almost-invariant sets with respect to the dynamics, as equivalently do regions where the generalised eigendistributions $\beta_j$ are positive on average \cite{Froyland03,Froyland14}. 

\rev{Mixing results such as \eqref{eq:RuellePollicott} are proven by showing appropriate compactness properties on a sensible Banach space for the Koopman operator, or more commonly its adjoint in $L^2(\dd x)$, the transfer operator. This approach also governs numerical study \cite{Keller99}. We will define in Section~\ref{s:Results} a certain family of Hilbert spaces of analytic functions  $\{\Hd^2_t\}_{t>0}$, with $\Hd^2_s \subset \Hd^2_t \subset C^\infty(\T,\C)$ for $s>t>0$. For an analytic circle map the transfer operator is bounded and compact on $\Hd^2_t$ for small enough $t$ \cite{Bandtlow17}. Let $\Hd^2_{-t} \supset L^2(\T,\C)$ be spaces of generalised distributions that we identify with the dual space of $\Hd^2_t$. The Koopman operator is therefore bounded and compact on $\Hd^2_{-t}$.}
 
\begin{figure}[t]
	\centering
	\includegraphics[width=\linewidth]{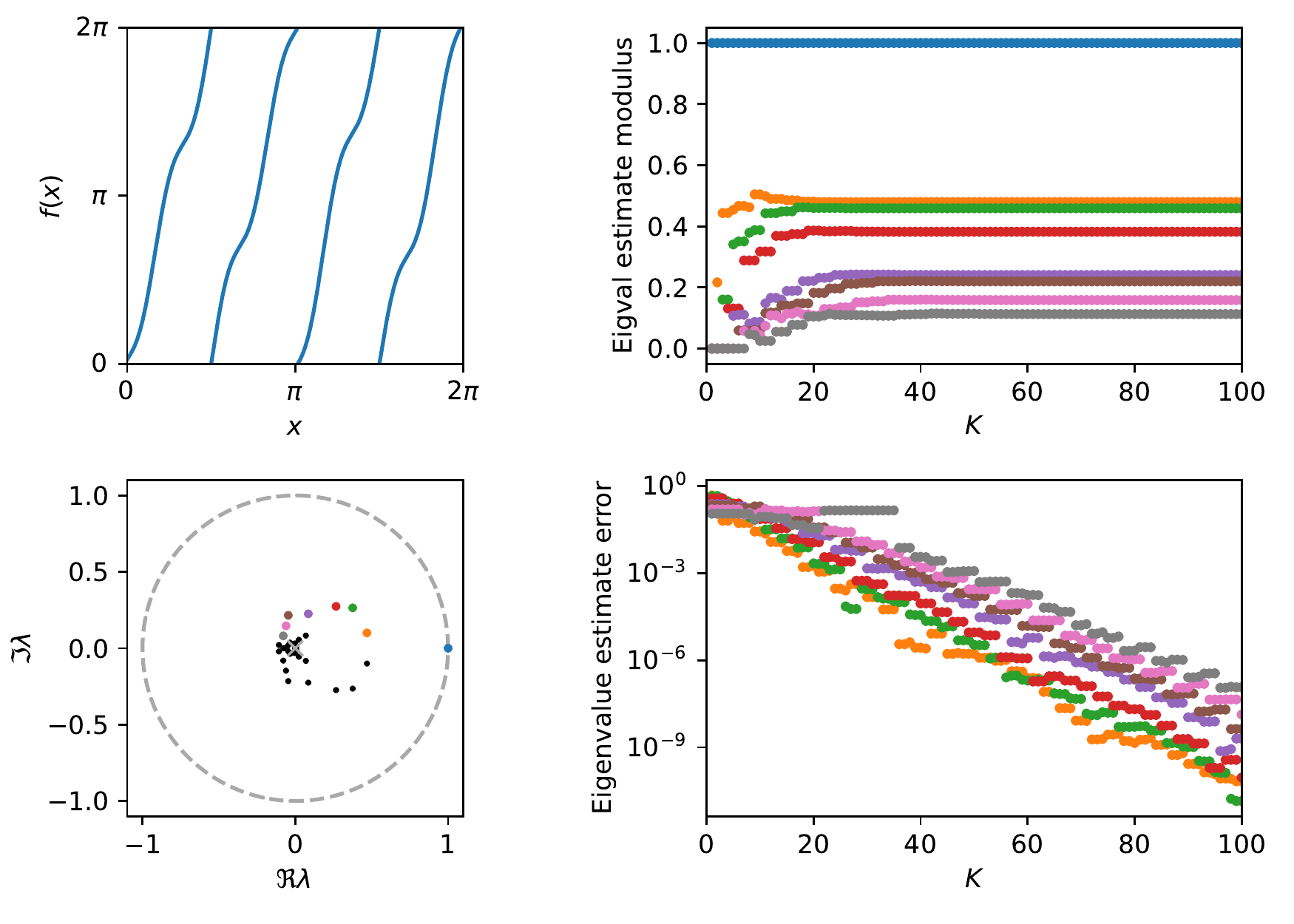}
	\caption{Top left: graph of the circle map $f(x) =  4x - 0.4\sin 6x+0.08\cos 3x \mod 2\pi$. Bottom left: Ruelle-Pollicott resonances (i.e. Perron-Frobenius operator eigenvalues) of $f$ with certain eigenvalues marked in colour. Top right: modulus of EDMD eigenvalues obtained with $\mu$ the physical measure of $f$, with colours corresponding to eigenvalues in the bottom left. Bottom right: exponential convergence with respect to dictionary size of the errors between EDMD eigenvalues and Ruelle-Pollicott responances, with corresponding colours. Ruelle-Pollicott resonances estimated using a Fourier transfer operator discretisation \cite{Wormell19}.}
	\label{fig:circlemap}
\end{figure}

\subsection{Main result on EDMD}\label{ss:MainResult}
The EDMD algorithm makes a best operator approximation of the Koopman operator $\K$ within the span of the observable dictionary $\{\psi_k \}_{|k|< K}$. This best approximation done with respect to the metric of $L^2(\mu_N)$, where $\mu_N$ is the empirical measure of the data points $\{x_n\}_{n=1,\ldots,N}$. This least-squares approximation of $\K$ can be computed in the $\psi_k$ basis as the Koopman matrix
$$ (\Psi_{(0)}^* \Psi_{(0)})^{-1} (\Psi_{(0)}^* \Psi_{(1)}) $$
where $(\Psi_{(0)})_{nk} = \psi_k(x_n)$ and $(\Psi_{(1)})_{nk} = \K\psi_k(x_n) = \psi_k(f(x_n))$. The key benefit of EDMD over other algorithms is that these matrices can be filled directly from observations of the system. 

As the number of points $N$ goes to infinity, this operator converges to a (still finite-dimensional) continuum limit operator $\K_K \rev{:= \P_K\K |_{\im \P_K}}$, which is the least-squares best approximation in $L^2(\mu)$ of $\K$ within the dictionary of observables $\im \P_K= \spn\{\psi_k\}_{|k|< K}$ \cite{Klus16}. \rev{We will extend this operator naturally to act on $\Hd^2_{-t}$ as $\K_K= \P_K \K \P_K$.} As a result of this convergence, the Koopman matrix's spectral data also converge to those of $\K_K$ (see Figure~\ref{fig:dataeig}).

The main result of this paper is as follows. Let $\lambda_{j,K}$ be the eigenvalues of the EDMD estimate of the Koopman operator $\K_K$ obtained using the function dictionary 
\begin{equation}  \{ e^{-i(K-1)x}, e^{-i(K-2)x}, \ldots, e^{-ix}, 1, e^{ix}, \ldots, e^{i(K-1)x}\},\label{eq:ObsDict}\end{equation}
in the limit of infinite data points. Each of these eigenvalues, which will be simple for $K$ large enough if the corresponding $\lambda_j$ is simple, have \rev{left eigenvectors $a_{j,K}$ and right eigenvectors $b_{j,K}$ that are polynomials}, which we can consider in the observable basis.

\begin{theorem}\label{t:SimpleSpectral}
	Suppose $f$ is analytic and $\mu$ has analytic density. Then for some $c, h, \zeta >0$ and $\kappa \in (0,1)$ depending only on $f, \mu$, the following properties hold:
	
	\begin{itemize}
		\item The Hausdorff distance between the spectrum of the Koopman matrix $\K_K$ and the Ruelle-Pollicott responances $\{\lambda_j\}_{j\in \N}$ is $\mathcal{O}(e^{-c\sqrt{K}})$ as $K \to \infty$.
		\item 	Let $\lambda_j$ be any resonance with multiplicity $1$ (see Theorem~\ref{t:Deterministic} for a more general result). Then there are \rev{left (resp. right) eigenvectors $a_{j,K}$ (resp. $b_{j,K}$) of $\K_K$} such that for $t \in [0,\zeta]$,
		$$ |\lambda_{j,K} - \lambda_j|, \|a_{j,K} - a_j\|_{\Hd^2_{\kappa t}}, \|b_{j,K} - b_j\|_{\Hd^2_{-t}} = \mathcal{O}(e^{-h t K}).$$
	\end{itemize} 
\end{theorem}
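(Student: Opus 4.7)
The plan is to combine the Galerkin-style approximation bound of Theorem~\ref{t:GalerkinGood} with classical analytic perturbation theory for the compact Koopman operator on $\Hd^2_{-t}$. Each space $\Hd^2_t$ (for $t>0$) fits the setting of Theorem~\ref{t:GalerkinGood} with Beurling weight $\sigma_t(k)=e^{t|k|}$, and the analytic density $h$ of $\mu$ trivially satisfies the hypotheses there. Applying the theorem with $\sigma=\sigma_s$, $\tau=\sigma_t$ for $s>t>0$ immediately yields $\|I-\P_K\|_{\Hd^2_s\to\Hd^2_t}=\mathcal{O}(e^{-(s-t)K})$, and a duality argument (invoking Theorem~\ref{t:GalerkinGood} for the measure with density proportional to $h^{-1}$) gives the analogous bound $\|I-\P_K\|_{\Hd^2_{-s}\to\Hd^2_{-t}}=\mathcal{O}(e^{-(t-s)K})$ on the negative-index scale.

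Writing $\K-\K_K=(I-\P_K)\K+\P_K\K(I-\P_K)$ and exploiting the smoothing property of the analytic Koopman operator---for some $\kappa\in(0,1)$ small enough, $\K:\Hd^2_{-t}\to\Hd^2_{-\kappa t}$ is bounded, which is the very mechanism underlying its compactness on $\Hd^2_{-t}$---I would deduce $\|\K-\K_K\|_{\Hd^2_{-t}\to\Hd^2_{-t}}=\mathcal{O}(e^{-htK})$ for some $h>0$. For a simple isolated resonance $\lambda_j$ I would then pick a small positively-oriented contour $\gamma_j$ enclosing only $\lambda_j$; the second resolvent identity shows that $(zI-\K_K)^{-1}$ is uniformly $\mathcal{O}(e^{-htK})$-close to $(zI-\K)^{-1}$ on $\gamma_j$, so the associated Dunford spectral projections are exponentially close in norm, giving the bounds for $\lambda_{j,K}$ and the right eigenvector $b_{j,K}$. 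The stronger left-eigenvector convergence in the smaller space $\Hd^2_{\kappa t}$ is obtained by running the same perturbation argument on the dual side: left eigenvectors of $\K_K$ are right eigenvectors of a transfer-type compression $\K_K^*$ on $\Hd^2_{\kappa t}$, and the corresponding smoothing of the transfer operator $\L$ upgrades the regularity of $a_{j,K}$.

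The Hausdorff statement for the entire spectrum is the delicate point, because the resonances accumulate at $0$ where no uniform bound on $(zI-\K)^{-1}$ holds, so Step~3's approach cannot be made uniform in $j$. Here I would switch to a Fredholm-determinant argument: since $\K$ has exponentially decaying singular values on $\Hd^2_{-t}$ (a standard fact for transfer operators of analytic expanding maps), one can upgrade the previous norm estimate to a trace-class bound $\|\K-\K_K\|_1=\mathcal{O}(e^{-h'tK})$, so that the entire functions $z\mapsto\det(I-z\K)$ and $z\mapsto\det(I-z\K_K)$ are exponentially close on compact subsets of $\C$. A Jensen/Rouch\'e-type argument then converts determinant closeness into Hausdorff closeness of the zero sets. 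The $\sqrt{K}$ exponent arises from balancing: in a ball of radius $r$ around $0$ the resonances have density of order $\log(1/r)$, and an $\mathcal{O}(e^{-cK})$ determinant error can only localise each zero up to $e^{-cK/\log(1/r)}$; equating $r$ with this displacement gives $\log(1/r)\sim\sqrt{cK}$, i.e.\ $r=\mathcal{O}(e^{-c'\sqrt{K}})$.

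The main obstacle is thus this last step: bootstrapping from per-eigenvalue perturbation to a uniform Hausdorff estimate despite the accumulation of resonances at $0$. Operator-norm perturbation alone does not suffice; one needs a trace-ideal bound on $\K-\K_K$ of the same exponential quality as the norm bound, plus a careful Jensen-type zero-counting argument for the Fredholm determinants. The preceding steps, while requiring bookkeeping of constants and spaces, are essentially mechanical once Theorem~\ref{t:GalerkinGood} and the classical compactness theory of $\K$ on $\Hd^2_{-t}$ are in hand.
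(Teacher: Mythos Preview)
Your overall strategy matches the paper's: combine Theorem~\ref{t:GalerkinGood} with the smoothing property $\K:\Hd^2_{-t}\to\Hd^2_{-\kappa t}$ to get exponential operator-norm convergence, then apply analytic perturbation theory for the simple eigenvalues. Two points where you diverge are worth flagging.

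First, the duality step. Applying Theorem~\ref{t:GalerkinGood} to the measure $h^{-1}\,\dd x$ would bound a \emph{different} projection, not $\P_K$. The paper instead observes that $\P_K$ is $L^2(\mu)$-self-adjoint, so its Lebesgue adjoint is $h\,\P_K\,h^{-1}$; the negative-index bound then follows from the positive-index bound on $\P_K$ together with multiplier bounds on $h,h^{-1}$. Relatedly, rather than bounding $\K-\K_K$ directly via your splitting (which works but forces $t\le\kappa\zeta$ to close the loop), the paper uses the cleaner observation that $\K_K=\P_K\K\P_K$ shares its spectrum and right eigenvectors with $\P_K\K$, and its left eigenvectors with $\K\P_K$; one then only needs the one-sided bounds $\|\K-\P_K\K\|_{\Hd^2_{-t}}$ and $\|\K-\K\P_K\|_{\Hd^2_{-\kappa t}}$, each requiring a single smoothing step.

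Second, the Hausdorff bound. The paper does not go through Fredholm determinants or a Jensen argument. Instead it notes that the finite-rank operators $\{\P_L\K\}_{L\le K}$ approximate both $\K$ and $\K_K$ with error $\mathcal{O}(e^{-cL})$, so the approximation numbers of both operators decay at the same exponential rate; a black-box result of Bandtlow (\cite[Corollary~5.3]{Bandtlow15}) then converts matching approximation-number profiles plus an $\mathcal{O}(e^{-cK})$ norm gap directly into the $\mathcal{O}(e^{-c\sqrt{K}})$ Hausdorff bound. Your determinant route would also succeed and produces the same $\sqrt{K}$ exponent for essentially the same reason (the underlying entire functions have order~$2$), but the approximation-number route is shorter and avoids having to upgrade the norm estimate to a trace-class one.
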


An illustration of this theorem is given in Figure~\ref{fig:circlemap}, with eigenvalue estimates converging exponentially with the dictionary size $K$ as predicted: note that the leading constants generally increase as the modulus of the eigenvalue decreases.

The consequence of Theorem~\ref{t:SimpleSpectral} is that, in this setting, EDMD gives very accurate estimates about the system's rates of mixing, encoded through the Ruelle-Pollicott resonances. It also means that the so-called {\it DMD modes} (or left Koopman modes) $a_{j,K}$ and {\it (right) Koopman modes} $b_{j,K}$ obtained respectively as left and right eigenvectors of the Koopman operator approximation closely approximate certain limiting objects that give information about the long-term dynamical properties of the system. For $|\lambda_j|$ large, the limiting DMD mode $a_j$ and right Koopman mode $b_j$ encode structures that persist under the dynamics on $\mathcal{O}(\log |\lambda_j|)$ timescales, including almost-invariant sets \cite{Froyland14, Dellnitz02}; for smaller eigenvalues they give information about finer mixing properties. Notwithstanding that the maps we consider are extremely structurally simple and have transfer operators with particularly nice approximation properties, this may go some way to explaining the success of DMD with small dictionaries.

It is worth noting however that the left Koopman modes $b_{j,K}$ converge in the space of hyper-distributions $\Hd^2_{-t}$ (dual to $\Hd^2_{t}$) to the generalised distribution $b_j$, which does not make sense as a function in the proper sense of the word, but rather as something you can integrate sufficiently smooth functions against. In particular, the $b_{j,K}$ do not converge pointwise or in $L^2$. This situation is illustrated in Figure~\ref{fig:eigmodes}: for low orders $K$ the right Koopman modes $b_{j,K}$ do in fact appear like ``nice'' enough functions that delimit almost-invariant sets, but as the resolution of the approximation (i.e. $K$) increases they become increasingly irregular: the consistency of the envelope of the functions in Figure~\ref{fig:eigmodes} nevertheless suggests that they under some smoothing they may consistently return functions delimiting almost-invariant sets.

\rev{It is also worth noting that Koopman operators of chaotic systems are typically highly non-normal in spaces of differentiable functions, and are not guaranteed to be diagonalisable. In our setting nevertheless, because the Koopman operator is are compact, all eigenvalues except $0$ admit a Jordan decomposition). Some explicit, uniform bounds for the error in eigenvalues are given in \cite{Bandtlow15}. 
}

The overarching strategy to prove Theorem~\ref{t:SimpleSpectral} parallels that sketched in \cite{Slipantschuk20}, i.e. we study by duality a transfer operator acting on Hilbert spaces $\{\Hd^2_t\}_{t \in (0,\zeta]}$ of analytic functions, and consider the effect of the least squares projection onto the finite observable space \eqref{eq:ObsDict} in these Hilbert spaces: Theorem~\ref{t:GalerkinGood} allows us to bound the error of $\P_K$ from our Hardy spaces $\Hd^2_t \to \Hd^2_s$ as $\mathcal{O}(e^{-(t-s)})$ for small enough $t,s$ for analytic measure densities $\md$, which is enough to obtain this result.
\\

\begin{figure}[]
	\centering
	\includegraphics[width=0.7142\linewidth]{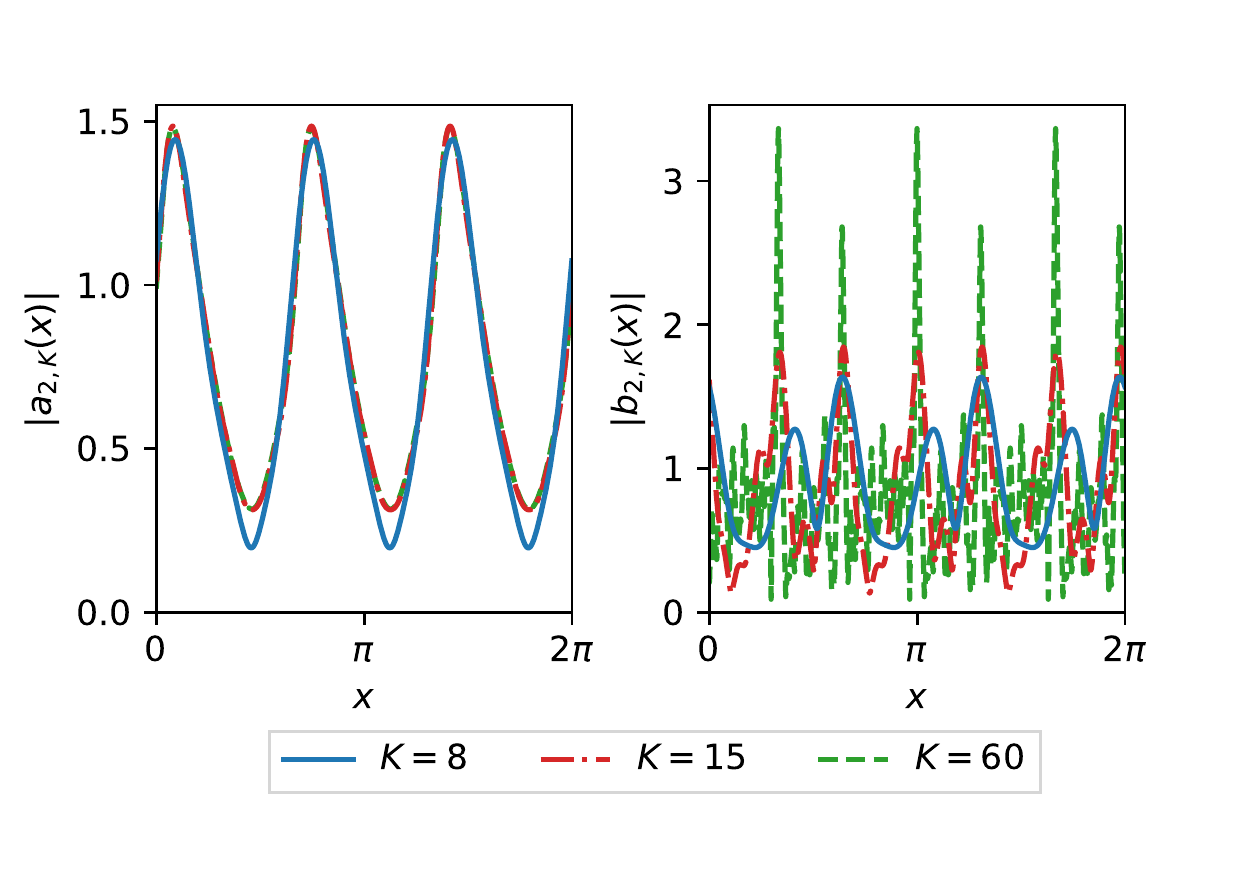}
	\caption{For the circle map given in Figure~\ref{fig:circlemap}, the absolute value of the left (left plot) and right (right plot) eigenmodes of the Koopman operator approximation $\K_K$ corresponding to the second-largest eigenvalue. The different modes of convergence (respectively in a space of analytic functions, and weakly, as a hyperdistribution) are shown.}
	\label{fig:eigmodes}
\end{figure}

We have been careful to make our results as tight as possible, but there are places where they could be meaningfully improved. In particular, in proving the least-squares approximation result, we bounded the operator norms of certain operators in Proposition~\ref{p:TildeOperators} via the trace norm, which would scale poorly into higher dimensions, and even in one dimension requires $\mu$ to be at least half an order more differentiable than the functions it is approximating. 

This would make it difficult to apply our methods to study EDMD with respect to less regular densities---which naturally occur as physical meaures of chaotic systems and therefore as the sampling measure for time series. A salutary example is for the logistic map $f(x) =ax(1-x)$, which is chaotic for a positive measure set of parameters $a \in [3.5,4]$, and which models the non-hyperbolic dynamics which are standard in fluid flow. Chaotic logistic maps typically have a physical measure whose density contains inverse square root singularities, and therefore lacks the level of differentiability required for our results \cite{Ruelle09}. Further work, perhaps building on OPUC theory in \cite{Geronimo06}, might show a path through, and establish how well DMD works with the kinds of irregular sampling measures typical of most chaotic dynamics.

In this work we have of course put aside an important, indeed limiting, component: the effect of the data discretisation. We know that this converges weakly as the empirical measure of the data $\{x_n\}$ converges weakly to the limiting measure $\dd\mu(x)$ \cite{Klus16}: there are many results showing this, including for time series data $x_{n+1} = f(x_n)$, where we know the rate of convergence is the central limit theorem rate $\mathcal{O}(N^{-1/2})$ \cite{Chernov95}. This is borne out in simulations: see Figure~\ref{fig:eigmodes}. Nevertheless, this rate of convergence may also depend on the size of the dictionary (i.e. on $K$), a key direction for future work.

\subsection{Paper plan}

This paper is structured as follows. In Section~\ref{s:Results} we make a precise statement of our assumptions on the system and the corresponding theorems. In Section~\ref{s:Cholesky} we relate the $L^2(\mu)$-orthogonal trigonometric polynomials to a Cholesky decomposition of multiplication by $\mu$ in a Fourier basis, and in Section~\ref{s:Ortho} quantitatively characterise this decomposition to prove Theorem~\ref{t:GalerkinGood}. Finally in Section~\ref{s:Transfer} we combine this result with transfer operator results to obtain our theorems on EDMD.

\begin{figure}
	\centering
	\includegraphics[width=0.5714\linewidth]{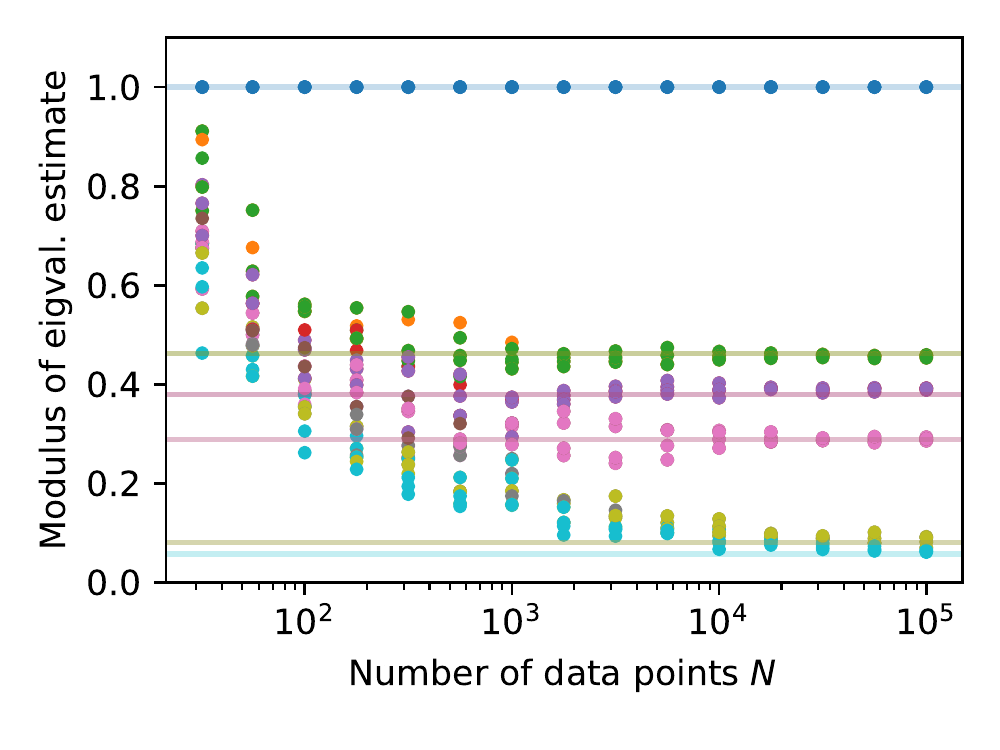}
	\caption{Convergence of the EDMD operator spectrum for fixed approximation size $K=8$ as the number of data points goes to infinity. In pale lines, the corresponding spectrum.}
	\label{fig:dataeig}
\end{figure}

\section{Detailed setup and theorems}\label{s:Results}
In this section we will state the precise condition on the map $f$ and sampling density $\mu$ that we require, and state our Koopman operator approximation theorems in generality.

We will base our function spaces on extensions of the standard $L^p$ spaces on the torus $\T = \R/2\pi\Z$. Recall the $L^p$ norms are defined as
\[ \| \varphi \|_{L^p(\T)} = \begin{cases} \left( \int_0^{2\pi} |\varphi(x)|^p \,\dd x \right)^{1/p},& p \in [1,\infty) \\ \esssup_{x\in \T} | \varphi(x) |, & p = \infty. \end{cases} \]

However, we will need to construct stronger function spaces to obtain effective results. In particular, we will look at on certain open complex strips in phase space around $\T$, which we will parametrise by their half-thickness $\zeta$:
	\[ \T_\zeta := \{z \in \C/2\pi\Z: |\Im z| < \zeta\}\]
On these sets we will define so-called Hardy spaces $\Hd^p_\zeta$ for $\zeta>0$: the space $\Hd^p_\zeta$ is the set of holomorphic functions $\varphi: \T_\zeta \to \C$ such that 
\[ \| \varphi \|_{\Hd^p_\zeta} := \begin{cases}
\sup_{u \in [0,\zeta)} \left(\frac{1}{4\pi} \int_\T |\varphi(\theta+i u)|^p + |\varphi(\theta - iu)|^p\,\dd\theta\right)^{1/p},& p \in [1,\infty)\\
\sup_{z \in \T_\zeta} |\varphi\rev{(z)}|,& p = \infty \end{cases} \]
is finite (in which the supremum is attained in the limit as $u \to \zeta$, and $\varphi$ is continuous onto $\partial \T_\zeta$ almost everywhere). The $L^p(\T)$ norms emerge as a limiting case as $\zeta \to 0$. (For concision, we will define $\Hd^p_0:=L^p(\T)$).

The spaces $\Hd^2_\zeta$ are Hilbert spaces, and we can characterise these norms in Fourier space. Let $\F: L^2(\T) \to \ell_2(\Z)$ be the Fourier series operator:
\[ (\F\varphi)(k) = \frac{1}{2\pi} \int_\T e^{-ikx} \varphi(x)\,\dd x \]
and for $\sigma$ a Beurling weight let us define the Hilbert spaces $W^\sigma(\T) = \F^{-1} [\sigma \ell^2(\Z)]$ with
$$ \|\varphi\|_{W^\sigma} = \| \sigma \F \varphi \|_{\ell^2(\Z)}. $$

It is a classic result that the standard $L^2(\T)$ is such a space with $\sigma(n) \equiv 1$; Sobolev spaces $H^r$ are isometric to those with $\sigma(n) = (1+n^2)^{r/2}$. Similarly, if we define the Beurling weight
\[\sigma_\zeta(k) := \sqrt{\cosh 2k\zeta},\] 
then our Hardy space \rev{has the construction\footnote{\rev{This works as follows. The complex exponential functions are orthogonal in both $\Hd^2_\zeta(\T)$ and $W^{\sigma_\zeta}$ with $\langle e_j, e_k \rangle = \delta_{jk} \cosh 2k\zeta = \sigma_\zeta(k)^2$, so we have that $W^{\sigma_\zeta}$ (which is the span of the complex exponentials) is a closed subspace of $\Hd^2_\zeta(\T)$. But this means that any element of $\Hd^2_\zeta$ orthogonal to $W^{\sigma_\zeta}$ must have all zero Fourier coefficients, so must be identically zero: hence they are the same.}}} $\Hd^2_\zeta(\T) = W^{\sigma_\zeta}$.

We can also define spaces that act as the dual of these smooth function spaces. For a Beurling weight $\sigma$ let $W^{\sigma^{-1}}$ be the completion of $L^2(\T)$ with respect to the following norm:
\[ \| \varphi \|_{W^{\sigma^{-1}}} = \sup_{\|\psi\|_{W^{\sigma}} = 1} \frac{1}{2\pi} \int_\T \bar\varphi\, \psi\,\dd\theta. \] 
This Hilbert space $W^{\sigma^{-1}}$ is therefore isometric to $\sigma^{-1} \ell^2(\mathbb Z)$ under the Fourier series operator $\F$. We can of course therefore define $\Hd^2_{-\zeta} = W^{\sigma_\zeta^{-1}}$. \rev{It is isometric to the dual of $\Hd^2_\zeta$ by the map taking distribution $\varphi$ to $\psi \mapsto \tfrac{1}{2\pi} \int_\T \varphi\, \psi\,\dd x$. }

We will use the notation $\mathcal{A}^\dag$ to stand for the adjoint operator of $\mathcal{A}$ in $L^2(\T,\mathrm{\Leb}/2\pi)$, and extend this to functions: $\phi^\dag\psi := \int_\T\bar{\phi}\, \psi\,\dd x$. 
\\

Our basic assumptions for our results on EDMD will be that the map $f$ is uniformly expanding and real-analytic and the sampling measure $\mu$ has positive density $\md(x)$ with respect to Lebesgue. Note that if $\mu$ is the absolutely continuous invariant measure of $f$, then the first part implies the second.

To better understand rates of convergence, we will make the following quantitative assumptions on $f$ for some $\zeta > 0$:
	\begin{itemize}
		\item The {\rev periodic lift} $\hat f: \R \to \R$ of $f$ has an inverse $v: \R\to\R$ that extends analytically to $\R_\zeta := \R + i[-\zeta,\zeta]$. {\rev (Supposing $f$ is $w$-to-one, $\hat f$ will map $[0,2\pi]$ onto an interval of length $2\pi w$, and its inverse $v$ will map $[0,2\pi w]$ onto an interval of length $2\pi$.)}
		\item For $z \in \R_\zeta$, $v$ is a uniform contraction with $|v'(z)| \leq \kappa < 1$.
		\item $\| \log (v') \|_{C^\alpha(\T_\zeta)} \leq D$ for some constants $\alpha > 0$ and $D \leq \tfrac{\pi}{3} \zeta^{-\alpha}$.
	\end{itemize}
The last bound can always be achieved by making $\zeta$ smaller.

When studying the convergence of EDMD, we will also make the following quantitative assumptions on the measure density $\md$:
	\begin{itemize}
		\item $\md$ extends analytically to $\T_\zeta$.
		\item For $z \in \T_\zeta$, $M^{-1} \leq |\md(z)| \leq M$ for some constant $M$.
		\item $\|\sigma \F[(\log \md)']\|_{\ell^q} \leq A$ for some constants $A$ and $q <\infty$. (Note this occurs if $(\log \md)' \in L^p(\partial \T_\zeta)$ where $1/p + 1/q = 1$.)%\footnote{Application of Kato-Ponce inequalities to bound the exponential in the proof of Proposition~\ref{p:EtaBounds}\ref{res:ThetaSymbol} could improve this to requiring $\log \md \in W^{1/2,2} \cap L^\infty$. Our result is almost certainly suboptimal in any case, because in Proposition~\ref{p:TildeOperators} we prove that certain operators are bounded essentially by computing a Frobenius norm, which is suboptimal: probably what is required is $\sigma \F[(\log \md)'] \in \ell_1$.}
	\end{itemize}

The key to our results on spectral convergence is the following strong operator convergence result:
\begin{theorem}\label{t:Deterministic}
	For all $t \in (0,\zeta]$ the Koopman operator $\K$ extends to an operator $\Hd^2_{-t} \circlearrowleft$. On this space it is compact and there exists a constant $C$ depending only on $M,A,D,\zeta$ such that
	\[ \|\P_K\K - \K \|_{\Hd^2_{-t}}, \|\K\P_K - \K \|_{\Hd^2_{-\kappa t}}  \leq %\frac{C}{\min\{1,\log t^{-1}\}} 
	C e^{-(1-\kappa) t K}. \]
\end{theorem}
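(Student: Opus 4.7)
The plan is to reduce both bounds to Theorem~\ref{t:GalerkinGood}, using the Banach-space duality $(\Hd^2_t)^*\cong\Hd^2_{-t}$ and the analyticity-improving property of the transfer operator $\L=\K^\dag$. I would first verify that, under the expansion hypothesis $|v'|\leq\kappa$ on $\R_\zeta$ (controlled in $C^\alpha$ by $D$), each inverse branch contracts $\T_t$ into $\T_{\kappa t}$, so $\L:\Hd^2_{\kappa t}\to\Hd^2_t$ is bounded for every $t\in(0,\zeta]$. In particular $\L$ is compact on $\Hd^2_t$, factoring as the trace-class inclusion $\Hd^2_t\hookrightarrow\Hd^2_{\kappa t}$ followed by the bounded $\L:\Hd^2_{\kappa t}\to\Hd^2_t$; hence by Schauder's theorem $\K$ extends to a compact operator on $\Hd^2_{-t}$.

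The key algebraic ingredient is then the identity
\[ \P_K^\dag = M_h\,\P_K\,M_{h^{-1}}, \]
where $M_h$ denotes multiplication by the density $h=\md$. Indeed, $\P_K\varphi$ is the unique trigonometric polynomial of degree $<K$ with $\D_K(h\P_K\varphi)=\D_K(h\varphi)$, and a direct computation of $\langle\P_K^\dag(hg),\psi\rangle_{L^2}=\langle g,\P_K\psi\rangle_{L^2(\mu)}$ gives $\P_K^\dag(hg)=h\P_K g$. Consequently $I-\P_K^\dag=M_h(I-\P_K)M_{h^{-1}}$, and since the hypotheses on $\md$ imply $h,h^{-1}\in\Hd^\infty_\zeta$, the $M_{h^{\pm1}}$ are uniformly bounded multipliers on each $\Hd^2_s$, $s\leq\zeta$. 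Applying Theorem~\ref{t:GalerkinGood} with Beurling weights $\sigma_t$ and $\sigma_{\kappa t}$ (the ratio $\sigma_{\kappa t}/\sigma_t$ is decreasing on $\N$ since $\kappa<1$) then yields
\[ \|I-\P_K^\dag\|_{\Hd^2_t\to\Hd^2_{\kappa t}} \;\lesssim\; \|I-\P_K\|_{\Hd^2_t\to\Hd^2_{\kappa t}} \;\lesssim\; \frac{\sigma_{\kappa t}(K)}{\sigma_t(K)} \;\lesssim\; e^{-(1-\kappa)tK}, \]
and by Banach duality the same rate for $\|I-\P_K\|_{\Hd^2_{-\kappa t}\to\Hd^2_{-t}}$.

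The two bounds then follow by composition. For the first, dualising gives $\|(\P_K-I)\K\|_{\Hd^2_{-t}}=\|\L(\P_K^\dag-I)\|_{\Hd^2_t}$, which I factor as $\Hd^2_t\xrightarrow{\P_K^\dag-I}\Hd^2_{\kappa t}\xrightarrow{\L}\Hd^2_t$, multiplying the projection estimate above by $\|\L\|_{\Hd^2_{\kappa t}\to\Hd^2_t}$. For the second, I factor directly as $\Hd^2_{-\kappa t}\xrightarrow{\P_K-I}\Hd^2_{-t}\xrightarrow{\K}\Hd^2_{-\kappa t}$, with $\K$ bounded as the Banach adjoint of the smoothing $\L:\Hd^2_{\kappa t}\to\Hd^2_t$. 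The main obstacle I expect is tracking constants uniformly in $t$ so that $C$ depends only on $M,A,D,\zeta$: the operator norm of $\L:\Hd^2_{\kappa t}\to\Hd^2_t$ must be controlled via $\|\log v'\|_{C^\alpha(\T_\zeta)}\leq D$, and the multiplier norms of $M_{h^{\pm1}}$ via $M^{-1}\leq|\md|\leq M$ on $\T_\zeta$; these are standard Hardy-space estimates once the framework above is set up.
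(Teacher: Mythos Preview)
Your approach is essentially the same as the paper's: both rely on the conjugation $\P_K^\dag = M_h\P_K M_{h^{-1}}$ to transfer the Galerkin estimate of Theorem~\ref{t:GalerkinGood} to the dual scale, and both factor through the smoothing $\L:\Hd^2_{u}\to\Hd^2_t$ (dually $\K:\Hd^2_{-t}\to\Hd^2_{-u}$). The only organisational difference is that the paper works in the negative-index spaces throughout and dualises just $I-\P_K$, whereas you dualise the whole operator $(\P_K-I)\K$ to $\L(\P_K^\dag-I)$ and work in positive-index spaces; these are equivalent.

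One technical wrinkle: you invoke $\L:\Hd^2_{\kappa t}\to\Hd^2_t$ bounded at the \emph{endpoint} $u=\kappa t$. The paper's transfer-operator bound (Theorem~\ref{t:GoodL2Bound}) is stated only for $u>\kappa t$, because $v(\partial\T_t)$ may meet $\partial\T_{\kappa t}$ and the $\Hd^2$ estimate there is delicate. The paper therefore inserts an intermediate $u\in(\kappa t,t)$, bounds $\|I-\P_K\|_{\Hd^2_t\to\Hd^2_u}\lesssim e^{-(t-u)K}$ and $\|\L\|_{\Hd^2_u\to\Hd^2_t}\leq C_{\Hd^2}M^2$ \emph{uniformly in $u$}, and takes the infimum as $u\downarrow\kappa t$. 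Your argument goes through once you make this same adjustment; otherwise you would need to justify the endpoint case separately.
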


%This allows us to estimate all Ruelle-Pollicott resonances of the system, including against various different weights. In the usual case when $g=1$ this gives the dynamic modes of the system, that is, the operators  \red{write down the Ruelle-Pollicott resonances in terms of decay of correlations, but don't replace the theorem }.
Because the spectrum and right (resp. left) eigenmodes of $\P_K \K$ (resp. $\K \P_K$) are the same as that of $\K_K$, this allows us to estimate all the eigenmodes and Ruelle-Pollicott resonances of the system in \eqref{eq:RuellePollicott}.

\rev{Define the distance between two closed vector spaces $E,F$ in a norm $\| \cdot \|$ as
\[ d(E,F) = \max\left\{ \sup_{e\in E, \|e\|=1} d(e,F), \inf_{f \in F,\|f\|=1} d(f,E) \right\} \]
where the distance from a vector to a vector subspace is defined as $d(e,F) = \inf_{f \in F} \| e- f\|$. Then we have the following theorem:}
\begin{corollary}\label{c:Spectrum}
	For all $\K$'s eigenvalues $\lambda$ of multiplicity $m$, the error in the $\K_K$ estimate as $K \to \infty$ is $\mathcal{O}(e^{-(1-\kappa)\zeta K/m})$ for the eigenvalues, and $\mathcal{O}(e^{-(1-\kappa) t K})$ in $\Hd^2_{-t}$ (resp. $\Hd^2_{\kappa t}$) for the right (resp. left) generalised eigenspaces. 
\end{corollary}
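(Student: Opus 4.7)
The plan is to apply classical spectral perturbation theory for compact operators (Kato--Osborn) to the operator-norm bounds in Theorem~\ref{t:Deterministic}. First I observe that, for any nonzero eigenvalue, $\K_K = \P_K\K\P_K$ shares its spectrum and right generalised eigenvectors with $\P_K\K$: any eigenvector of $\P_K\K$ with nonzero eigenvalue must lie in $\im\P_K$, hence is also an eigenvector of $\K_K$ at the same eigenvalue. Dually, the left generalised eigenvectors of $\K_K$ coincide with those of $\K\P_K$. Thus it suffices to treat $\P_K\K$ and $\K\P_K$ as compact perturbations of $\K$ on $\Hd^2_{-t}$ and $\Hd^2_{-\kappa t}$ respectively.

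For the right generalised eigenspace, fix $t \in (0,\zeta]$ and let $\lambda$ be an eigenvalue of $\K$ on $\Hd^2_{-t}$ of algebraic multiplicity $m$. Let $\Gamma$ be a small circle around $\lambda$ separating it from the rest of $\mathrm{spec}(\K)$, and $\Pi = \tfrac{1}{2\pi i}\oint_\Gamma (zI-\K)^{-1}\,\dd z$ the Riesz spectral projection onto its generalised eigenspace. Since $\|\P_K\K-\K\|_{\Hd^2_{-t}} \to 0$ by Theorem~\ref{t:Deterministic}, for $K$ large the resolvent of $\P_K\K$ exists and is uniformly bounded on $\Gamma$; the second resolvent identity then yields
\[ \|\Pi_K - \Pi\|_{\Hd^2_{-t}} \leq C_\Gamma \|\P_K\K-\K\|_{\Hd^2_{-t}} \leq C' e^{-(1-\kappa)tK}. \]
In particular $\Pi_K$ has rank $m$ for $K$ large, and the gap distance $d(\im\Pi_K,\im\Pi)$ is of the same order, giving the claimed $\mathcal{O}(e^{-(1-\kappa)tK})$ rate in $\Hd^2_{-t}$.

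For the eigenvalues, I would invoke Osborn's theorem: the perturbed eigenvalues $\lambda_{K,j}$ in $\mathrm{ran}\,\Pi_K$ satisfy
\[ |\lambda_{K,j} - \lambda| \leq C \, \|\Pi(\K-\P_K\K)\Pi\|^{1/m}, \]
the $1/m$ power reflecting that on a Jordan block of size up to $m$ a perturbation of size $\varepsilon$ may split eigenvalues by as much as $\varepsilon^{1/m}$. Taking $t = \zeta$ gives the stated $\mathcal{O}(e^{-(1-\kappa)\zeta K/m})$ eigenvalue rate. The left eigenspace claim follows by the same Riesz-projection argument applied to $\K\P_K$ on $\Hd^2_{-\kappa t}$: dualising the projection convergence via the Lebesgue pairing between $\Hd^2_{-\kappa t}$ and $\Hd^2_{\kappa t}$ (equivalently, applying the argument to the dual operator $\P_K^\dag\K^\dag$ on $\Hd^2_{\kappa t}$) gives $\mathcal{O}(e^{-(1-\kappa)tK})$ gap-metric convergence of the left generalised eigenspaces in $\Hd^2_{\kappa t}$.

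The only substantive obstacle is the $1/m$ power in the eigenvalue bound: this is not part of basic resolvent perturbation and requires either invoking Osborn's result for compact operators or carrying out an elementary Jordan-block computation on the finite-dimensional space $\im\Pi$. Everything else reduces to compactness of $\K$ (given by Theorem~\ref{t:Deterministic}) and the uniform operator-norm convergence of the perturbation on $\Gamma$.
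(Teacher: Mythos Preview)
Your proposal is correct and follows essentially the same route as the paper: both reduce the problem to $\P_K\K$ (for right eigendata) and $\K\P_K$ (for left eigendata) via the observation that generalised eigenvectors of $\K_K$ at nonzero eigenvalues lie in $\im\P_K$, and then invoke Osborn's compact-operator perturbation theorems together with the operator-norm bounds of Theorem~\ref{t:Deterministic}, taking $t=\zeta$ for the optimal eigenvalue rate. Your Riesz-projection sketch makes explicit what the paper leaves to the citation of \cite{Osborn75}, but the underlying argument is the same.
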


The proofs of these results are given in Section~\ref{s:Transfer}.
%To obtain this result, we show that, up to a constant, $\mu$-orthogonal projection onto trigonometric polynomials is as good as the standard Fourier truncation $\mathcal{D}_K$ onto modes $|k|<K$ :
%\begin{theorem}\label{t:GalerkinGood}
%	There exists a constant $C_{\P}$ depending on only $M,p,A$ such that for all $0 \leq s < t \leq \zeta$,
%	\[ \| I - \P_K \|_{\Hd^2_t \to \Hd^2_s}  \leq C_{\P} \|I - \D_K\|_{\Hd^2_t \to \Hd^2_s}  \leq 2C_{\P}e^{-K(t-s)}. \]
%\end{theorem}

%\red{Put this paragraph where?} It is an elementary fact that $\K: \Hd^2_{-t} \circlearrowleft$ is the dual of the Perron-Frobenius operator $\L_1: \Hd^2_{t} \circlearrowleft$, which has been heavily studied in the dynamical systems literature and in particular encodes various statistical properties of the map such as Birkhoff variances and power spectra (cite).

\section{Cholesky factorisation}\label{s:Cholesky}

To understand the $\mu$-orthogonal projection onto our dictionary of complex exponential basis functions $\{e_k\}_{|k|\leq K}$, it will be natural to understand the transformation between the complete, infinite basis $\{e_k\}_{k \in \Z}$ and an orthogonal polynomial basis of $L^2(\mu)$ (in which the $L^2(\mu)$ projection is therefore diagonal). In the correct ordering it is known that the basis change matrices are triangular and asymptotically banded \cite{Simon05}: we will prove some quantitative results on the rate of convergence to bandedness as the order of the polynomial grows.

Recall that the complex exponential family on $\C/2\pi\mathbb{Z} \supset \T$ 
\[ e_k(z) := e^{ikz},\, k \in \Z, \]
is a basis both of $L^2(\T)$ and our Beurling-weighted spaces $W^\sigma$. For the purposes of this section, let us order the basis' index set $\Z$ as
\[ 0, -1, 1, -2, 2, -3, 3, \ldots\]
and group the basis elements into ``blocks'' $\{e_0\}, \{e_{-1},e_1\}, \{e_{-2},e_2\},\ldots$.

%; we will call an operator $\mathcal{A}: L^2(\T) \circlearrowleft$ lower-triangular (resp. upper-triangular) if $e_j^\dag \mathcal{A} e_k = 0$ for all $j$ earlier (resp. later) in the order than $k$.

For any \rev{bounded} probability density $\md: \T \to \R_+$, define the Fourier space multiplication operator $\M: L^2(\T) \to L^2(\T)$:
\begin{equation} \M \varphi = \md \varphi. \label{eq:MDef} \end{equation}

%	In this case, $C_\md$ stands for covariance with respect to $\md$: for any two exponential functions $e_j, e_k$ we find that
%	\[ C_{jk} = \frac{1}{2\pi} \int_\T \overline{e_j} e_k \md\,\dd x.\]
%	From this it is clear that $C_\md$ is a Toeplitz operator in the standard ordering of the index set $\Z$, with
%	\[ (C_\md)_{jk} = \hm_{j-k} := (\F\md)(j-k).\]
%	In the ordering we choose, $C_\md$ is block-Toeplitz away from the first column and row, with
%	\[ C_{\{-j,j\},\{-k,k\}} = \left(\begin{array}{ll}
%		\hm_{k-j}& \hm_{-j-k} \\ \hm_{j+k} & \hm_{j-k}
%	\end{array} \right),\, j,k = 1,2,\ldots \]

The operator $\M$ is positive-definite and Hermitian in $L^2(\T)$. (In fact, in Fourier space it is near-block-Toeplitz with respect to our basis order.) It turns out we can make a Cholesky decomposition of $\M$ that is very nicely, uniformly bounded in our Hardy or Beurling-weighted Hilbert spaces, under our analyticity assumptions on $\md$:
\begin{proposition}\label{p:CholeskyExists}
 Supposing only that \rev{$\md$ is bounded above and away from zero}, there exists a unique lower triangular operator $\V$ acting on $L^2(\T)$ such that
			\[ \M = \V \V^\dag. \]
		Furthermore, $\V$ is invertible. If we let $\V^{-1} =: \U^\dag$, the operator $\U$ is upper-triangular with
			\[ \M^{-1} = \U^\dag \U. \]
\end{proposition}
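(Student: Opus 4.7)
The plan is to construct $\V$ by performing Gram--Schmidt orthonormalisation in the $L^2(\mu)$ inner product on the exponentials taken in the prescribed order $e_0,e_{-1},e_1,e_{-2},e_2,\ldots$, which I write as $\{e_{j_n}\}_{n\ge 0}$. Because $M^{-1}\le \md \le M$, the norms $\|\cdot\|_{L^2(\mu)}$ and $\|\cdot\|_{L^2(\T)}$ are equivalent, so each Gram--Schmidt residual is strictly positive and yields a well-defined orthonormal polynomial $\phi_n \in \spn\{e_{j_0},\ldots,e_{j_n}\}$ with positive leading coefficient $\langle \phi_n, e_{j_n}\rangle_\mu$. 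Since trigonometric polynomials are dense in $L^2(\mu)$, the family $\{\phi_n\}_{n\ge 0}$ is a complete orthonormal basis of $L^2(\mu)$.

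I will then define $\U$ on the algebraic span of the exponentials by $\U e_{j_n}:=\phi_n$. Because $\phi_n\in \spn\{e_{j_0},\ldots,e_{j_n}\}$, the matrix of $\U$ in the ordered basis has $[\U]_{mn}=0$ for $m>n$, so $\U$ is upper triangular. As $\U$ sends the $L^2(\T)$-orthonormal basis $\{e_{j_n}\}$ to the $L^2(\mu)$-orthonormal basis $\{\phi_n\}$, it is a linear isometry $L^2(\T)\to L^2(\mu)$, so by norm equivalence it extends to a bounded and boundedly invertible operator on $L^2(\T)$ with $M^{-1/2}\le \|\U^{\pm 1}\|_{L^2(\T)}\le M^{1/2}$. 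The isometry relation $\langle \U\varphi,\U\psi\rangle_\mu = \langle \varphi,\psi\rangle_{L^2}$ translates to $\U^\dag\M\U = I$, which rearranges to $\M = (\U^\dag)^{-1}\U^{-1}$. Setting $\V:=(\U^\dag)^{-1}$ gives a bounded operator, lower triangular because $\U^\dag$ is lower triangular with nonzero diagonal and its inverse can be assembled from back-substitution on the nested finite corners $\spn\{e_{j_0},\ldots,e_{j_n}\}$. By construction $\M=\V\V^\dag$ and $\V^{-1}=\U^\dag$, and inverting gives the stated factorisation of $\M^{-1}$ in terms of $\U$ and $\U^\dag$.

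For uniqueness I will run the standard Cholesky argument: given two lower triangular invertible factorisations $\V_1\V_1^\dag = \M = \V_2\V_2^\dag$, the operator $\V_2^{-1}\V_1 = \V_2^\dag(\V_1^\dag)^{-1}$ is simultaneously lower and upper triangular, hence diagonal; normalising the diagonal entries of each $\V_i$ to be positive (which the Gram--Schmidt construction naturally produces) forces this diagonal to be the identity.

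The main obstacle is purely the infinite-dimensional bookkeeping: ensuring that operators defined via Gram--Schmidt on the algebraic span of exponentials extend to bounded operators on all of $L^2(\T)$, and that inverses of triangular operators remain triangular. Both are dealt with by the equivalence of the $L^2(\mu)$ and $L^2(\T)$ norms---the sole use of the two-sided bounds on $\md$---together with the block-finite structure of the ordered basis, which reduces triangular inversion to a sequence of consistent finite-dimensional back-substitutions.
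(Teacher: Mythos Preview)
Your proof is correct. Both your approach and the paper's exploit the same underlying idea---Cholesky factorisation of $\M$---but reach it by different routes. The paper invokes an abstract infinite-matrix Cholesky lemma in the real trigonometric basis $1,\sin x,\cos x,\ldots$ and then performs a block-diagonal $LQ$ decomposition on the $\{e_k,e_{-k}\}$ blocks to pass to the complex exponential ordering; you instead construct $\U$ directly by Gram--Schmidt in $L^2(\mu)$ on the complex exponentials in the prescribed order, which is precisely the standard constructive proof of Cholesky existence and avoids both the external citation and the basis change. Your route is more self-contained and essentially gives Proposition~\ref{p:Polynomials} for free (your $\phi_n$ are by construction the orthonormal polynomials $p_k$), at the cost of having to verify explicitly that the bounded inverse $\U^{-1}$ is still upper triangular---which you handle correctly via the invariance of the nested finite spans $\spn\{e_{j_0},\ldots,e_{j_n}\}$ under $\U$. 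The uniqueness argument you supply is the standard one and is implicit in the lemma the paper cites.
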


			\begin{proof}[Proof of Proposition~\ref{p:CholeskyExists}]
				Let us consider $\M$ as an infinite matrix acting on the basis \[1, \sin z, \cos z, \sin 2z, \cos 2z, \ldots\] The matrix of $\M$ is real and positive-definite, bounded on $L^2(\T)$. By \cite[Lemma~3.1]{Chui82}, there exists a unique lower-triangular operator $\V_\R$ such that $\M= \V_\R \V_\R^\dag$. Now, for any function $\varphi \in L^2(\T)$,
				\begin{equation} \varphi^\dag \M \varphi = \varphi^\dag \V_\R \V_\R^\dag \varphi = \| \V_\R^\dag \varphi \|_{L^2}^2 \label{eq:CholeskyNormTrick1}\end{equation}
				so since $\M \varphi = \md \varphi$, 
				\begin{equation} M^{-1} \|\varphi\|_{L^2}^2 \leq \| \V_\R^\dag \varphi \|_{L^2}^2 \leq M \|\varphi\|_{L^2}. \label{eq:CholeskyNormTrick2}\end{equation}
				Since, $\V^\dag_\R$ is bounded and invertible in $L^2(\T)$, so too is its adjoint $\V_\R$. Now, $\V^\dag_\R$ is block-lower triangular in the complex exponential basis $e_k$, the blocks consisting of $\{e_k, e_{-k}\}$. We can perform a $LQ$ decomposition on each block to obtain a block-diagonal unitary matrix $\mathcal{Q}$ such that $\V = \V_\R \mathcal{Q}$  is lower-triangular in the complex exponential basis, and \rev{$\U = (\V^{-1})^\perp$} is upper-triangular in this basis with $\V \V^\dag = \M$ and $\U^\dag \U= \M^{-1}$.
			\end{proof}

Now, define the functions 
\[ p_k = \U e_k. \]
From the proposition above, it turns out that these are a complete family of (complex) trigonometric polynomials orthonormal with respect to $\md$:
\begin{proposition}\label{p:Polynomials}
	The $p_k$ are each trigonometric polynomials of order exactly $|k|$, and form a complete orthonormal basis in $L^2(\mu)$.
\end{proposition}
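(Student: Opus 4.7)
The plan is to derive all three assertions---that $p_k = \U e_k$ is a trigonometric polynomial of degree exactly $|k|$, that the $p_k$ are orthonormal in $L^2(\mu)$, and that they span a dense subspace---directly from the triangular Cholesky structure of Proposition~\ref{p:CholeskyExists} together with the fact that $\U$ is a bounded invertible operator on $L^2(\T)$.

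First I would use that $\U$ is upper-triangular in the chosen ordering $e_0, e_{-1}, e_1, e_{-2}, e_2, \ldots$: the image $p_k = \U e_k$ is a finite linear combination of the $e_j$ appearing no later than $e_k$. Concretely, $p_{\pm n}$ lies in the span of $\{e_j : |j| < n\} \cup \{e_{\pm n}\}$, hence is a trigonometric polynomial of degree at most $|k|=n$. To upgrade this to exact degree $|k|$, I would show inductively along the ordering that every diagonal entry $U_{kk}$ is nonzero: if some $U_{kk}$ vanished, $\U e_k$ would lie in the span of $\{\U e_j\}_{j \text{ earlier than } k}$, contradicting invertibility of $\U$.

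Second, for orthonormality the one nonroutine manoeuvre is to convert back to the Lebesgue inner product and exploit the Cholesky identities. From $\V^{-1} = \U^\dag$ one obtains $\V^\dag = \U^{-1}$, hence $\M = \V \V^\dag = (\U^{-1})^\dag \U^{-1}$, so
\[ \langle p_j, p_k\rangle_{L^2(\mu)} = \langle \U e_j, \M \U e_k\rangle_{L^2(\T)} = \langle \U e_j, (\U^{-1})^\dag e_k\rangle_{L^2(\T)} = \langle e_j, e_k\rangle_{L^2(\T)} = \delta_{jk}. \]
Completeness is then soft: the bounds $M^{-1} \leq \md \leq M$ make $L^2(\T)$ and $L^2(\mu)$ coincide as topological vector spaces, and $\U$ is bounded with bounded inverse $\V^\dag$ on $L^2(\T)$, so it sends the complete orthonormal family $\{e_k\}$ to a Schauder basis $\{p_k\}$ of $L^2(\T)$, which is therefore dense in $L^2(\mu)$ as well.

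I do not anticipate a serious obstacle: essentially all the work has been done in Proposition~\ref{p:CholeskyExists}, and the present statement is an unpacking of its content. The one point requiring mild care is deducing ``diagonal of $\U$ nonzero'' from ``$\U$ invertible'' in an infinite-dimensional triangular setting, but because the ordering groups only the pair $\{e_{-n},e_n\}$ together at each level, the induction is elementary, proceeding two steps at a time.
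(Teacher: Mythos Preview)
Your proposal is correct and follows essentially the same route as the paper: the orthonormality computation via $\M = (\U^{-1})^\dag \U^{-1}$ is exactly the paper's (written in adjoint notation), the degree bound comes from upper-triangularity, and completeness from the equivalence $L^2(\T) \cong L^2(\mu)$ together with invertibility of $\U$. The only minor difference is that the paper takes the nonvanishing of the diagonal entries of $\U$ as already given by the Cholesky construction in Proposition~\ref{p:CholeskyExists}, whereas you re-derive it from invertibility; your small imprecision that $p_{n}$ lies in $\spn(\{e_j:|j|<n\}\cup\{e_n\})$ (it may also involve $e_{-n}$, which precedes $e_n$ in the ordering) does not affect either the ``degree $\leq |k|$'' or the ``degree $=|k|$'' conclusion.
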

\begin{proof}%[Proof of Proposition~\ref{p:Polynomials}]
	Integrating two polynomials against each other,
	\begin{align*}
		\int_\T \overline{p_j} p_k \,\dd \mu &= (\U e_j)^\dag \md \U e_k\\
		&= e_j^\dag \U^\dag ((\U^\dag)^{-1} \U^{-1}) \U e_k\\
		&= e_j^\dag e_k = \delta_{jk},
	\end{align*}
	as required for orthonormality. Because $\U$ is upper-triangular in the complex exponential basis with non-zero diagonals, $\U e_k$ is a combination of complex exponentials of order no greater than $|k|$, so $p_k$ is a trigonometric polynomial of the right order. 
	
	On the other hand, the complex exponentials $\{e_k\}_{k\in\Z}$ form a complete basis of $L^2(\T,\mathrm{\Leb})$ and hence of $L^2(\T,\mu)$, as the spaces are equivalent. But the $p_k$ are preimages of $e_k$ under $\U^{-1}: L^2(\T, \mu) \circlearrowleft$, so $\{p_k\}_{k\in\Z}$ must therefore, like $\{e_k\}_{k\in\Z}$, be a complete basis of $L^2$.
	%Now, from Proposition~\ref{p:TransposeInverse}, $U_{\mu^{-1}}^\dag$ is equal to $V_{\mu}^{-1}$ so
	%\[ U_{\mu^{-1}}^\dag C_\mu U_{\mu^{-1}} = V_{\mu}^{-1} V_{\mu} V_{\mu}^\dag (V_{\mu}^{-1})^\dag = I, \]
	%giving orthonormality.
\end{proof}

The $p_k$ as described here are complex, but they can be transformed via a unitary map into a real orthonormal family (see the proof of Proposition~\ref{p:CholeskyExists}): we have only chosen a complex exponential basis for the simplicity of the following presentation.

%A corollary of our result is that as $k\to\pm\infty$, these approximate the original $e_k$, times a complex function whose absolute value is $\sqrt{\md}$: \cite{Simon05} % [Proposition~4.2.2]
%\begin{corollary}
%	There exists a function $\varphi: \T \to \R$ \red{with some bound in norm} and constant $C$ depending only on $\zeta, M, A$ such that for all $k \in \N$ (or $\Z$?), \red{figure out what the envelope is in the complex plane}
%	\[ \sup_{x \in \T} | p_k(x) - \sqrt{\md(x)} e^{i(kx -\varphi(x) \sign k)} | \leq C e^{-\zeta k} \]
%\end{corollary}
%\begin{proof}
%	We will show $p_k \approx \theta^{-\sign k} e_k$. We have on the real line that $|\theta^-| = |\theta^+| = \md/|\theta^-|$ so $\theta^-(x) = \sqrt{\md(x)} e^{i \varphi(x)}$ and the complex conjugate of this for $\theta^+$.
%	
%	Now (digging into the proof of Theorem~\ref{t:Cholesky}) we have $p_k = \U e_k = (I + \tilde\U)\bU e_k$. When $k > 0$: $p_k - \theta^- e_k = -(\P^- + \P^\circ) \theta^- e_k + \tilde\U \P^+ \theta^- e_k$.
%\end{proof}

The corollary of Proposition~\ref{p:CholeskyExists} is that $L^2(\mu)$-orthogonal projection is conjugate to the usual $L^2(\Leb)$ projection (i.e. the Dirichlet kernel, whose properties are very well known):
\begin{proposition}\label{p:OrthoProjectionDecomposition}
	Let $\P_K: L^2(\mu) \circlearrowleft$ be the orthogonal projection onto trigonometric polynomials of order $< K$. Then
	\[ \P_K = \U^{-1} \D_K \U = \V^\dag \D_K \U \] 
	where $\D_K = \F^{-1} \mathbb{1}_{(-K+1,\ldots,K-1)} \F$ is convolution by the Dirichlet kernel.
\end{proposition}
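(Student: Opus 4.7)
The plan is to exploit Proposition~\ref{p:Polynomials}, which has already done the substantive work by producing a complete $L^2(\mu)$-orthonormal basis $\{p_k = \U e_k\}_{k\in\Z}$ of trigonometric polynomials with $\deg p_k = |k|$. Once this basis is in hand, $\P_K$ is simply the partial-sum projector in the $p_k$-expansion, and re-expressing that projector in the exponential basis $\{e_k\}$ is a short algebraic computation.

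First I would observe that $\spn\{p_k : |k| < K\}$ coincides with the space of trigonometric polynomials of degree $< K$: it is contained there by Proposition~\ref{p:Polynomials}, and the $p_k$ are linearly independent as they are $L^2(\mu)$-orthonormal, so the two $(2K-1)$-dimensional spaces agree. Hence
\[ \P_K \varphi \;=\; \sum_{|k|<K} \langle p_k, \varphi\rangle_{L^2(\mu)}\, p_k. \]

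Next I would evaluate the coefficient. Writing
\[ \langle p_k, \varphi\rangle_{L^2(\mu)} \;=\; \int_\T \overline{p_k}\,\varphi\,\md\,\dd x \;=\; p_k^\dag \M \varphi \;=\; e_k^\dag(\U^\dag\M)\varphi, \]
the key step is the Cholesky identity: from Proposition~\ref{p:CholeskyExists}, $\U^\dag = \V^{-1}$ and $\M = \V\V^\dag$, so $\U^\dag \M = \V^{-1}\V\V^\dag = \V^\dag$, which yields $\langle p_k,\varphi\rangle_{L^2(\mu)} = (\V^\dag\varphi)_k$. Assembling, and recognising $\D_K = \sum_{|k|<K} e_k e_k^\dag$ as the Fourier truncation,
\[ \P_K \varphi \;=\; \sum_{|k|<K} (\V^\dag\varphi)_k\, \U e_k \;=\; \U\, \D_K\, \V^\dag\varphi, \]
which is the claimed factorisation; the equivalent form in the statement is recovered via $\U^{-1} = \V^\dag$, an immediate consequence of $\U^\dag = \V^{-1}$.

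There is no real conceptual obstacle; the heavy lifting was already carried out in Proposition~\ref{p:Polynomials}. What remains is algebraic bookkeeping: tracking daggers versus inverses in the Cholesky relations, and keeping straight the translation between the exponential basis $\{e_k\}$ (in which $\D_K$ is diagonal) and the $L^2(\mu)$-orthonormal polynomial basis $\{p_k\}$ (in which $\P_K$ is diagonal).
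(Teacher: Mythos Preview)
Your derivation is correct and follows essentially the same route as the paper's proof: both use that $\{p_k=\U e_k\}$ is a complete $L^2(\mu)$-orthonormal basis spanning the trigonometric polynomials of each degree, so that $\P_K$ is diagonal in this basis, and then change to the $\{e_k\}$ basis in which $\D_K$ is diagonal. Your explicit computation of the coefficient via $\U^\dag\M=\V^{-1}\V\V^\dag=\V^\dag$ is a mild variant of the paper's direct conjugation argument.

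There is one slip at the end. You obtain $\P_K=\U\,\D_K\,\V^\dag$ and then assert that ``the equivalent form in the statement is recovered via $\U^{-1}=\V^\dag$''. But that substitution turns your expression into $\U\D_K\U^{-1}$, whereas the displayed formula reads $\U^{-1}\D_K\U$; these are different operators unless $\U$ commutes with $\D_K$, which it does not for non-constant $\md$. In fact the paper's own proof reaches $\U^{-1}\P_K\U=\D_K$ and then makes the same inversion slip in its concluding line: from $\U^{-1}\P_K\U=\D_K$ one obtains $\P_K=\U\D_K\U^{-1}=\U\D_K\V^\dag$, which is exactly what you derived. So the displayed formula in the proposition is a typo, and you should flag the discrepancy rather than claim equivalence. (It is harmless downstream: in the proof of Theorem~\ref{t:GalerkinGood} one only needs $I-\P_K$ to factor as a bounded operator, $I-\D_K$, and another bounded operator, and Theorem~\ref{t:CholeskyBound} bounds all four of $\U,\V,\U^\dag,\V^\dag$, so the order of the outer factors is immaterial.)
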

\begin{proof}
	%% W^\sigma VERSION
%	Since $\{e_k\}_{k\in\Z}$ is a complete basis of $W^\sigma$, and the $p_k$ are obtained from the $e_k$ by the transformation $\U$, which from Theorem~\ref{t:CholeskyBound} we know is invertible $W^\sigma \to W^\sigma$, our polynomials $\{p_k\}_{k \in \Z}$ form a complete basis of $W^\sigma$ (and therefore also $L^2(\Leb) = L^2(\mu)$, setting $\sigma\equiv 1$).
Since $\{e_k\}_{k\in\Z}$ is a complete basis of $L^2(\T)$, and the $p_k$ are obtained from the $e_k$ by the transformation $\U$, which from Proposition~\ref{p:CholeskyExists} we know is invertible on $L^2(\T)$, our polynomials $\{p_k\}_{k \in \Z}$ form a complete basis of $L^2(\Leb) = L^2(\mu)$, setting $\sigma\equiv 1$. (Indeed, applying Theorem~\ref{t:CholeskyBound} below instead of Proposition~\ref{p:CholeskyExists}, we can say that this is true in the Beurling weighted spaces $W^\sigma$.)

The action of $\P_K$ on our basis is $\P_K p_k = \mathbb{1}_{|k| < K} p_k$. Using from Proposition~\ref{p:Polynomials} that $p_k = \U e_k$, we can conjugate by $\U$ to obtain
\[ \U^{-1} \P_K \U e_k = \mathbb{1}_{|k| \leq K} e_k. \]
Since the action of $\U^{-1} \P_K \U$ is precisely the action of $\D_K$, and $\U^{-1} = \V^\dag$, we obtain that  $\P_K = \U^{-1} \D_K \U = \V^\dag \D_K \U$ as required.
\end{proof}
%\[\| I - \D_K \|_{\Hd^2_t \to \Hd^2_s} = \frac{\cosh Ks}{\cosh Kt} = e^{-K(t-s)} + o(1).\]

Thus motivated to study Cholesky decompositions of multiplication operators, we embark on proving that such things exist and their constituents are bounded in $W^\sigma$.

\section{Proof of orthogonal polynomial results}\label{s:Ortho}

%The  $\|\V\|_{L^2(\T)}, \|\U\|_{L^2(\T)} \leq \sqrt{M}$.
We would like to understand our orthogonal projection $\P_K$ in the Beurling weighted spaces $W^\sigma$. Proposition~\ref{p:OrthoProjectionDecomposition} suggests that we should therefore study the norm of triangular operators $\U, \V$ in these weighted spaces. The following result, which we will spend this section proving, shows that these operators are uniformly bounded:
		\begin{theorem}\label{t:CholeskyBound}
	Under the assumptions on $\mu$ in Theorem~\ref{t:GalerkinGood}, there exists a constant $C_{\triangle}$ increasing in $q, M, A$ such that
	\[ \| \U \|_{W^\sigma}, \| \V \|_{W^\sigma}, \| \U^\dag \|_{W^\sigma}, \| \V^\dag \|_{W^\sigma} \leq C_{\triangle}. \]
\end{theorem}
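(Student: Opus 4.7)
The strategy is to factor $\md$ via its Szeg\H{o} outer function, reducing the bound to classical Banach-algebra estimates on $W^\sigma$. Since $M^{-1} \leq \md \leq M$, the outer function
\[ \phi(e^{ix}) := \exp\!\left(\tfrac{1}{2} \widehat{\log \md}(0) + \sum_{k>0} \widehat{\log \md}(k)\, e^{ikx}\right) \in H^\infty(\T) \]
satisfies $|\phi|^2 = \md$ on $\T$, so the Laurent multiplication operator $T_\phi: \psi \mapsto \phi\psi$ obeys $T_\phi^\dag T_\phi = \M$ and analogously $T_{\phi^{-1}}^\dag T_{\phi^{-1}} = \M^{-1}$. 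So $T_\phi^\dag$ is a Cholesky factor of $\M$, although not one triangular in the block ordering used in Proposition~\ref{p:CholeskyExists}.

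Next, I translate the hypothesis $\|\sigma\, \F[(\log \md)']\|_{\ell^q} \leq A$ into a bound on $\log \phi$ in the same weighted norm, using that $\log \phi$ is (up to a constant bounded by $\tfrac{1}{2}\log M$) the Riesz projection of $\tfrac{1}{2}\log \md$. The submultiplicativity $\sigma(j)\sigma(k) \leq \sigma(|j|+|k|)$ makes $W^\sigma$ a Banach algebra for pointwise multiplication, and a Wiener-type lemma then yields $\exp(W^\sigma) \subset W^\sigma$ with operator-norm control. Combined with the trivial $L^\infty$ bound $\|\phi\|_\infty^{\pm 1} \leq \sqrt{M}$, this gives $\|\phi\|_{W^\sigma}, \|\phi^{-1}\|_{W^\sigma} \leq C(q,M,A)$, so $T_{\phi^{\pm 1}}$ act boundedly on $W^\sigma$ with the same order of constant.

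Finally, to account for the triangularity, I perform a $QR$-type factorization of $T_\phi$ in the block ordering $0,-1,1,-2,2,\ldots$: write $T_\phi = Q R$ with $R$ upper triangular and $Q$ unitary on $L^2$. By uniqueness of the Cholesky decomposition (Proposition~\ref{p:CholeskyExists}), $\V = R^\dag = T_\phi^\dag Q$, and analogously $\U = Q' T_{\phi^{-1}}$ from the corresponding factorization of $T_{\phi^{-1}}$. The bounds on $\V,\U$ and their adjoints in $W^\sigma$ then reduce to combining the Banach-algebra bounds from the previous paragraph with uniform $W^\sigma$-boundedness of the unitary corrections $Q, Q'$.

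The principal obstacle is this last point: unitarity in $L^2(\T)$ does not automatically transfer to $W^\sigma$, and the CMV-like ordering mixes positive and negative frequencies in a way incompatible with the Hardy decomposition underlying the outer factorization. My plan is to control $Q$ quantitatively by invoking OPUC asymptotics from \cite{Simon05}: the Verblunsky coefficients decay in a $\sigma$-weighted sense controlled by $\|\log\md\|_{W^\sigma}$ (Baxter's theorem and its Beurling-weighted variants), which shows that $Q$ differs from a block-diagonal unitary by a correction whose $W^\sigma$-norm is controllable. If an explicit handle on $Q$ proves too involved, an alternative is to sidestep it entirely and estimate $\V$ by duality, expanding $\langle \V f, g\rangle$ in terms of $\langle T_\phi^\dag f, g\rangle$ projected to the lower-triangular part, and absorbing the off-diagonal Fourier sums against the Beurling-algebra norm of $\phi$.
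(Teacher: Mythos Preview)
Your outer factorization $\phi$ is the paper's $\eta^+$ (with $\theta^+ = 1/\phi$), and your Banach-algebra bounds on $\phi^{\pm 1}$ as $W^\sigma$-multipliers are essentially Proposition~\ref{p:EtaBounds}. So far the two approaches agree.

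The genuine gap is exactly the one you flag: bounding $Q$ on $W^\sigma$. Writing $T_\phi = QR$ with $R$ upper-triangular in the block ordering gives $\V = T_\phi^\dag Q$, but then $Q = T_{\phi^{-1}}^\dag \V$, so controlling $Q$ is equivalent to controlling $\V$ itself --- the argument is circular. Your proposed escape via Verblunsky decay is not obviously adequate: Baxter-type theorems give decay of the recursion coefficients under $\ell^1$-type weighted hypotheses, but turning that into a $W^\sigma$-operator bound on the full CMV unitary (which is what $Q$ essentially is) would require summing infinitely many Szeg\H{o} recursions with control, and you have not indicated how. The alternative duality sketch (``expand $\langle \V f,g\rangle$ and project to the lower-triangular part'') does not work either, because projection onto a triangular part is not bounded on $W^\sigma$ in general.

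The paper avoids this circularity by never introducing a bare unitary. Instead it builds explicit \emph{approximate} Cholesky factors $\bU = \bc\P^\circ + \P^+\theta^- + \P^-\theta^+$ and $\bV = \bc^{-1}\P^\circ + \eta^+\P^+ + \eta^-\P^-$, which treat positive and negative frequencies \emph{separately} (reflecting Szeg\H{o} asymptotics $p_k \sim \theta^- e_k$ for $k\to+\infty$ and $p_k \sim \theta^+ e_k$ for $k\to -\infty$). These are trivially $W^\sigma$-bounded since they are built from projections and multipliers. The correction $\tilde\V := \V\bV^{-1} - I$ is then controlled not by any unitary argument but by a direct diagonal computation: one shows $e_k^\dag(I+\tilde\V)(I+\tilde\V)^\dag e_k = 1 + s_k$ with $\sum_k \sigma(k)^2 |s_k| < \infty$ (Lemma~\ref{l:SDecay}), and a companion identity with $s_k'=0$ (Lemma~\ref{l:SDashAreZero}) pins down the diagonal of $\tilde\V$. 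This two-sided trick yields $\|\tilde\V\|_{L^2\to W^\sigma} \leq C$, which is strictly stronger than what a unitary bound would give and is the step your proposal is missing.
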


Our goal is to show that these operators $\U, \V$ obtained by Cholesky decomposition of a multiplication operator $\M$ approximate multiplication operators themselves, at least when acting on functions of high frequency.

To this end, let $\theta^+, \theta^-$ be real-analytic functions $\T \to \C$ such that $\theta^-(\bar z) = \overline{\theta^+(z)}$, $\theta^+ \theta^- = 1/\md$ and $\theta^+$ is holomorphic in the upper half-plane. We can specify them explicitly:
\begin{equation} (\F \log \theta^+)(k) = \left(\half \delta_{0k} + \mathbb{1}(k > 0)\right)\F(-\log \md)(k).\label{eq:ThetaDef}\end{equation}
and similarly for $\theta^-$ with $\mathbb{1}(k<0)$ replacing $\mathbb{1}(k>0)$. In the language of orthogonal polynomials on the unit circle (where one studies the variable $z = e^{ix}$), the function $\theta^+(\log z)$ is known as the {\it Szeg\H{o} function} of $\dd\mu(\log z)$ \cite{Simon05}. We will also notate their reciprocals $\eta^\pm = 1/\theta^\pm$. The following result states some of the basic properties of $\theta^\pm, \eta^\pm$ (c.f. \cite[Theorem~2.4.1]{Simon05}).

\begin{proposition}\label{p:EtaBounds}
	Under our assumptions on $\md$:
	\begin{enumerate}[a.]
	\item For all $x \in \T$ we have 
	\[ |\theta^+(x)| = |\theta^-(x)| = \sqrt{\md(x)}. \]
	 \item Considering functions as multiplication operators,
	\[ \| \theta^\pm\|_{W^\sigma\to W^\sigma}, \| \eta^\pm \|_{W^\sigma \to W^\sigma}\leq M^{1/2} e^{qA}. \]
	\item\label{res:ThetaSymbol} If $\hat \theta^+_l$ are the Fourier coefficients of $\theta^+$, then
	\[ \sum_{l=0}^\infty l \sigma(l)^2 |\hat \theta^+_l|^2 \leq q A^2 M e^{2qA}.  \]
	\end{enumerate}
\end{proposition}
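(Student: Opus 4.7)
The plan is to prove the three parts in sequence, exploiting throughout that $\log \theta^+$ has only non-negative Fourier coefficients (so that $\theta^+$ is holomorphic in the upper half of $\T_\zeta$) and the conjugation symmetry $\theta^-(\bar z) = \overline{\theta^+(z)}$. Part (a) follows immediately by restricting this symmetry to real $x \in \T$, where $\bar x = x$: then $\theta^-(x) = \overline{\theta^+(x)}$, so $|\theta^+(x)|^2 = \theta^+(x)\theta^-(x)$, and the algebraic product relation for $\theta^+\theta^-$ in terms of $\md$ yields $|\theta^\pm(x)| = \sqrt{\md(x)}$.

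For part (b), my main tool is that under the Beurling hypothesis the weighted space $\ell^1_\sigma := \{a : \sum_k \sigma(k)|a_k| < \infty\}$ is a Banach algebra under convolution, and multiplication by $\phi$ on $W^\sigma$ has operator norm at most $\|\F\phi\|_{\ell^1_\sigma}$. Writing $\theta^+ = \exp(u)$ with $u := \log \theta^+$, the algebra inequality gives $\|\F\theta^+\|_{\ell^1_\sigma} \leq \exp(\|\F u\|_{\ell^1_\sigma})$, so it suffices to bound $\|\F u\|_{\ell^1_\sigma}$. The $k=0$ term contributes at most $\half |\F(\log\md)_0| \leq \half\log M$ using $M^{-1} \leq \md \leq M$; for $k \geq 1$, I would rewrite
\[ \sigma(k)|\F(\log\md)_k| = \sigma(k)|\F[(\log\md)']_k| \cdot k^{-1} \]
and apply H\"older with dual exponents $q,q'$ to get $\sum_{k \geq 1}\sigma(k)|\F(\log\md)_k| \leq A \bigl(\sum_{k\geq 1}k^{-q'}\bigr)^{1/q'}$. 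Combining these and exponentiating, after checking that $(\sum_{k\geq 1}k^{-q'})^{1/q'} \leq q$ across $q \in (1,\infty)$ (using $\sum k^{-q'} \sim 1/(q'-1) = q-1$ as $q \to \infty$), yields the target bound $M^{1/2}e^{qA}$ for $\theta^\pm$; the identical argument with $-u$ in place of $u$ handles $\eta^\pm$.

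For part (c), I use the chain rule $(\theta^+)' = \theta^+ \cdot u'$. Pairing $\overline{\hat\theta^+_l}$ with $l\hat\theta^+_l = -i\F[(\theta^+)']_l$ in $W^\sigma$ and applying Cauchy--Schwarz gives
\[ \sum_{l\geq 0} l\,\sigma(l)^2 |\hat\theta^+_l|^2 \leq \|\theta^+\|_{W^\sigma}\,\|(\theta^+)'\|_{W^\sigma}. \]
The first factor is at most $M^{1/2}e^{qA}$ (by part (b) applied to the constant function $1$), and the second is at most $M^{1/2}e^{qA}\|u'\|_{W^\sigma}$ (by part (b) applied to the multiplier $\theta^+$ acting on $u'$). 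This leaves bounding $\|u'\|_{W^\sigma}^2 = \sum_{k\geq 1}\sigma(k)^2|\F[(\log\md)']_k|^2$ in terms of $A$ and $q$ using the $\ell^q$ hypothesis.

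The main technical obstacle is this last step: passing from the $\ell^q$ bound on $\sigma\,\F[(\log\md)']$ to the required $\ell^2$ bound. For $q \leq 2$ this is immediate from the embedding $\ell^q \subset \ell^2$; for $q > 2$, I would interpolate between the $\ell^q$ hypothesis and the trivial $\ell^\infty$ bound $\|\sigma\,\F[(\log\md)']\|_{\ell^\infty}\leq A$, yielding a factor polynomial in $q$ and hence producing the claimed $qA^2 M e^{2qA}$. The remaining work is careful book-keeping of the $q$-dependent constants through both (b) and (c) to extract the precise forms $M^{1/2}e^{qA}$ and $qA^2 M e^{2qA}$; the rest is a routine application of the symbolic calculus for Szeg\H{o}-type factorisations.
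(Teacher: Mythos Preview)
Your treatment of parts (a) and (b) is essentially the paper's argument: the conjugation symmetry on the real line for (a), and for (b) the Banach-algebra property of $\ell^1_\sigma$ under convolution (the paper's Beurling multiplication lemma) combined with the exponential series and H\"older against $k^{-1}$ to control $\|\log\theta^+\|_{\sigma;1}$. (A side remark: both you and the paper's \emph{statement} of (a) write $\sqrt{\md(x)}$, but since the defining relation is $\theta^+\theta^- = 1/\md$, the value is actually $\md(x)^{-1/2}$, as the paper's own proof in fact computes.)

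Part (c), however, has a genuine gap when $q>2$. Your Cauchy--Schwarz step reduces matters to bounding $\|u'\|_{W^\sigma} = \|\sigma\,\F[(\log\md)']\|_{\ell^2}$, but the hypothesis only gives $\|\sigma\,\F[(\log\md)']\|_{\ell^q}\leq A$. For $q>2$ there is no inclusion $\ell^q\hookrightarrow\ell^2$, and your proposed ``interpolation between the $\ell^q$ hypothesis and the trivial $\ell^\infty$ bound'' goes the wrong way: interpolating $\ell^q$ against $\ell^\infty$ only controls $\ell^r$ for $r\in[q,\infty]$, never $r=2$. So the argument as written cannot close.

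The paper avoids this by never passing through $W^\sigma$ for the derivative. It stays in the $\|\cdot\|_{\sigma;q}$ norm, bounding
\[
\|(\theta^+)'\|_{\sigma;q} \;\leq\; \|(\log\theta^+)'\|_{\sigma;q}\,\|\theta^+\|_{\sigma;1} \;\leq\; A\,M^{1/2}e^{qA},
\]
which uses the $\ell^q$ hypothesis directly. It then rewrites the target sum as
\[
\sum_{l\geq 1} l\,\sigma(l)^2|\hat\theta^+_l|^2 \;=\; \sum_{l\geq 1} l^{-1}\bigl(\sigma(l)\,|\F[(\theta^+)']_l|\bigr)^2
\]
and applies H\"older with exponents $\tfrac{q}{q-2}$ and $\tfrac{q}{2}$, pairing the summable weight $l^{-1}$ against $(\sigma\,|\F[(\theta^+)']|)^2\in\ell^{q/2}$. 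It is precisely this extra factor of $l^{-1}$ that buys the passage from $\ell^q$ control to a finite sum; your Cauchy--Schwarz step discards it too early.
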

%Note that when $\md$ is in fact twice differentiable, $a$ is small (of order $\zeta$): since $\md$ is real-analytic, $\Im \tfrac{\dd}{\dd z} \log \md$ is zero on the real line (i.e. no more than $\zeta$ away from $z$).

We will briefly find it useful to notate some $\l^p$-type norms associated with the Beurling weights. For $q \in [1,\infty]$ let $\|\varphi\|_{\sigma;q} = \|\sigma \F\varphi\|_{\ell^q}$.

\begin{lemma}\label{l:BeurlingMultiplication}
	Suppose $\sigma$ is a Beurling weight. Then for any $q \in [1,\infty]$,
	$$ \| \varphi \psi \|_{\sigma;q} \leq  \| \varphi \|_{\sigma;q}  \| \psi \|_{\sigma;1} $$
	whenever $\|\varphi\|_{\sigma;r}, \|\psi\|_{\sigma;1} < \infty$.
\end{lemma}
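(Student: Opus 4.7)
The plan is to reduce everything to Young's convolution inequality on $\ell^p(\Z)$ by passing to Fourier coefficients and exploiting the submultiplicativity of $\sigma$. Write $a = \F\varphi$ and $b = \F\psi$, so that $\F(\varphi\psi) = a*b$, with $(a*b)(n) = \sum_k a(k)\,b(n-k)$. The quantity to bound is then $\|\sigma\cdot(a*b)\|_{\ell^q}$, compared against $\|\sigma a\|_{\ell^q}\|\sigma b\|_{\ell^1}$.

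The key observation is pointwise: for every $n,k \in \Z$, since $|n| \leq |k| + |n-k|$ and $\sigma$ is even, nondecreasing on $\N$, and satisfies the Beurling inequality $\sigma(j)\sigma(k)\leq \sigma(|j|+|k|)$, we have
\[ \sigma(n) = \sigma(|n|) \leq \sigma(|k|+|n-k|) \leq \sigma(k)\sigma(n-k). \]
Plugging this into the convolution formula and taking absolute values inside yields
\[ |\sigma(n)(a*b)(n)| \leq \sum_{k\in\Z} \sigma(k)|a(k)|\cdot \sigma(n-k)|b(n-k)| = \bigl(|\sigma a| * |\sigma b|\bigr)(n). \]

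From here the classical Young convolution inequality on $\ell^q(\Z) * \ell^1(\Z) \hookrightarrow \ell^q(\Z)$, namely $\|f*g\|_{\ell^q}\leq \|f\|_{\ell^q}\|g\|_{\ell^1}$, applied to $f = |\sigma a|$ and $g = |\sigma b|$, gives
\[ \|\sigma\cdot(a*b)\|_{\ell^q} \leq \bigl\||\sigma a|*|\sigma b|\bigr\|_{\ell^q} \leq \|\sigma a\|_{\ell^q}\|\sigma b\|_{\ell^1} = \|\varphi\|_{\sigma;q}\|\psi\|_{\sigma;1}, \]
which is the desired bound. The finiteness assumption on the right-hand side ensures all sums converge absolutely so that Fubini is applicable and no subtle measurability issues arise.

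There is no real obstacle here; the only point that needs care is verifying the pointwise Beurling inequality for all sign combinations of $n,k,n-k$, which follows from evenness combined with the monotonicity on $\N$ and submultiplicativity in the form stated. This lemma will be used in the sequel essentially as a replacement for the algebra-property of the Wiener space under the weight $\sigma$, which is the $q=1$ special case.
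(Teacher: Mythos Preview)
Your proof is correct and follows exactly the paper's route: the pointwise bound $\sigma(n)\leq\sigma(k)\sigma(n-k)$ from submultiplicativity, followed by Young's inequality $\ell^q*\ell^1\to\ell^q$. One small wording issue: you quote the Beurling inequality as $\sigma(j)\sigma(k)\leq\sigma(|j|+|k|)$ but then (correctly) use it in the reverse, submultiplicative direction---the paper's stated definition carries the same sign slip, and it is the submultiplicative form that is actually needed here.
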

\begin{proof}
	All we need to prove is that
	$ \| \sigma (\hat\varphi \ast \hat\psi) \|_{\ell^q} \leq \| \sigma \hat\varphi \|_{\ell^q}  \| \sigma \hat\psi \|_{\ell^q}.$
	
	By the definition of the Beurling weight, $\sigma(k) \leq \sigma(|j|+|k-j|) \leq \sigma(j) \sigma(k-j)$, so
	\begin{align*} |\sigma(k) (\hat\varphi \ast \hat\psi)(k)| &= \sum_{j\in\Z} \sigma(k) |\hat\varphi(j)||\hat\psi(k-j)|\\
		 &\leq \sum_{j\in\Z} |\sigma(j)\hat\varphi(j)| |\sigma(k-j)\hat\psi(k-j)| \\
		 &\leq (|\sigma \hat\varphi| \ast |\sigma \hat\psi|)(k)
		 \end{align*}
	 As a result,
	\[ \| \sigma (\hat\varphi \ast \hat\psi) \|_{\ell^q} \leq \|  |\sigma\hat\varphi| \ast |\sigma\hat\psi| \|_{\ell^q} \leq \| \sigma \hat\varphi \|_{\ell^q}  \| \sigma \hat\psi \|_{\ell^1}\]
	as required.
\end{proof}

\begin{proof}[Proof of Proposition~\ref{p:EtaBounds}]
	We only need to prove the results for $\theta^+$, as $\log \theta^+(\bar z) = \overline{\log \theta^-(z)}$.
	
	The first part directly uses this fact: $|\theta^+(x)|^2 = \theta^-(x) \theta^+(x) = \md(x)^{-1}$.
	
	To prove the second part, we separate $k=0$ and apply H\"older's inequality to get %Lemma~\ref{l:BeurlingMultiplication} gives us that
	\[ \| \log \md \|_{\sigma;1} \leq \sigma(0) \left| \int \log \md\,\dd x \right| + \| \sigma(k) k \mathcal{F}[\log \md](k) \|_{\ell^q} \| \mathbb{1}_{k\neq 0} k^{-1}\|_{\ell^p} \]
	where $1/p + 1/q = 1$. Noting that by submultiplicativity $\sigma(0) \leq 1$, this gives us
	\[ \|\log \md \|_{\sigma;1} \leq \log M + \| (\log \md)'\|_{\sigma; q} (2\zeta(p))^{1/p} \leq \log M + A (2q)^{1/p} \leq \log M + 2qA. \]
	Noting that $\log \md$ is real on $\T$, \eqref{eq:ThetaDef} gives us that 
	\[ \| \log\theta^+ \|_{\sigma; 1} = \tfrac{1}{2}\| \log \md \|_{\sigma; 1} = \log M^{1/2} + qA. \]
	Now, $\theta^+$ and $\eta^+=\tfrac{1}{\theta^+}$ are respectively the $t=1,-1$ solutions of
	\[ \frac{\partial}{\partial t} \mathcal{E}_t(x) = \log\theta^+\, \mathcal{E}_t(x),\,  \mathcal{E}_0(x)  = 1\]
	so Gronwall's Lemma combined with Lemma~\ref{l:BeurlingMultiplication} gives
	\[ \|\theta^+ \|_{\sigma; 1},  \| \eta^+ \|_{\sigma; 1} \leq e^{\| \log\theta^+ \|_{\sigma;1}} \| 1 \|_{\sigma;1} \leq e^{qA} M^{1/2}. \]
	The bounds on $\theta^+,\eta^+$ considered as multipliers on $W^\sigma$ (which has the $\|\cdot\|_{\sigma;2}$ norm) follows from Lemma~\ref{l:BeurlingMultiplication}.
	
	For the third part we proceed in the same vein as the second part. From \eqref{eq:ThetaDef} we have $\F(\log \theta_+)' = \mathbb{1}(k > 0)\F(-\log \md)(k)$, so $\|\sigma \F[(\log \theta_+)'] \|_{\ell^q} \leq A$. Then,
	 \[\| (\theta^+)'\|_{\sigma;q} =\| (\log \theta^+)'\|_{\sigma;q} \| \theta^+\|_{\sigma;1} \leq A M^{1/2} e^{qA}.\]
	 Applying H\"older's inequality gives that
	 \begin{align*} \|k^{1/2} \sigma \mathcal{F}[\theta^+]\|_{\ell_2}^2 &= \| ( \mathbb{1}(k\neq 0) k^{-1/2}) \sigma \mathcal{F}[(\theta^+)']\|_{\ell^2}^2\\
	 	& \leq \| \mathbb{1}(k\neq 0) k^{-1} \|_{\ell^{q/(q-2)}} \| (\theta^+)' \|_{\sigma; q}^2 \\
	 	& \leq q A^2 M e^{2qA},
 	\end{align*}
	which is what needed to be proven.
\end{proof}

With these properties in hand we can now characterise the asymptotically banded structure of the Cholesky decomposition.

Define the projections
\begin{align*} \P^+ &= \F^{-1} \mathbb{1}_{\Z^+} \F\\
\P^\circ &= \F^{-1} \mathbb{1}_{\{0\}} \F = e_0 e_0^\dag\\
\P^- &=  \F^{-1} \mathbb{1}_{\Z^-} \F.
\end{align*}

The limiting Cholesky factors, which describe the action of $\U,\V$ on $e_k$ with $|k|$ large, are
\[ \bU = \bc \P^\circ + \P^+ \theta^- + \P^- \theta^+. \]
\[ \bV = \bc^{-1} \P^\circ + \eta^+ \P^+ + \eta^- \P^-  \]
where $\bc$ is an arbitrary constant: we set $\bc =\sqrt{\tfrac{1}{2\pi}\int \md^{-1}\,\dd x}$.

These obey the same relation as their equivalents $\V, \U$ do:

\begin{lemma}\label{l:BVBURelation} $\bV^{-1} = \bU^\dag$. \end{lemma}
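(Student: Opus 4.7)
The plan is to verify $\bU^\dag \bV = I = \bV \bU^\dag$ by direct expansion of these nine-term products, exploiting the one-sided Fourier support of the Szeg\H{o}-type functions $\theta^\pm, \eta^\pm$ to collapse most of the terms. First I would compute $\bU^\dag$: since $\P^\pm$, $\P^\circ$, and the real scalar $\bc$ are self-adjoint on $L^2(\T)$, and multiplication by $\theta^\pm$ has $L^2$-adjoint multiplication by $\overline{\theta^\pm} = \theta^\mp$ (from the symmetry $\theta^-(\bar z) = \overline{\theta^+(z)}$ applied on $\T$), this yields
\[ \bU^\dag = \bc \P^\circ + \theta^+ \P^+ + \theta^- \P^-. \]

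Next I would record the key Fourier-support properties implied by \eqref{eq:ThetaDef}: $\log \theta^+$ has Fourier coefficients supported on $\{k \geq 0\}$, hence so does $\theta^+ = \exp(\log \theta^+)$; since $\theta^+$ extends holomorphically to the upper half plane and is bounded below on $\T$ (Proposition~\ref{p:EtaBounds}), the reciprocal $\eta^+ = 1/\theta^+$ enjoys the same support property. The mirrored statements hold for $\theta^-, \eta^-$ with $\{k \leq 0\}$. Because $\P^+$ projects onto strictly positive modes $\{k \geq 1\}$, a short convolution check then gives $\theta^+ \P^+ = \P^+ \theta^+ \P^+$ and $\eta^+ \P^+ = \P^+ \eta^+ \P^+$, and symmetrically for $\theta^-, \eta^-$ with $\P^-$; consequently $\P^\circ$ and $\P^\mp$ annihilate $\theta^\pm \P^\pm$ and $\eta^\pm \P^\pm$.

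With these identities in hand, expanding the nine terms of $\bU^\dag \bV$ makes six cross terms vanish, either by orthogonality of distinct projections or by the support observation just made (e.g.\ $\theta^+ \P^+ \eta^- \P^- = 0$ because $\eta^- \P^-$ lands in $\im \P^-$ and is annihilated by $\P^+$). The three surviving diagonal terms collapse via $\theta^\pm \eta^\pm = 1$ and $\bc \cdot \bc^{-1} = 1$ to $\P^\circ + \P^+ + \P^- = I$. A symmetric expansion of $\bV \bU^\dag$ gives the same identity, confirming that $\bV$ is invertible with $\bV^{-1} = \bU^\dag$.

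I do not expect any substantial obstacle: the computation is short and the lemma is essentially a bookkeeping statement about one-sided Wiener--Hopf-type multiplication operators. The only care needed is to track that $\P^\pm$ projects onto \emph{strictly} signed Fourier modes, while $\theta^\pm, \eta^\pm$ have Fourier support on $\pm\Z_{\geq 0}$, so that the $k = 0$ mode is correctly absorbed into the $\bc \cdot \bc^{-1} = 1$ contribution from $\P^\circ$ rather than spilling into the $\P^+$ or $\P^-$ blocks.
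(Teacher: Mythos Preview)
Your proposal is correct and follows essentially the same approach as the paper: compute $\bU^\dag = \bc \P^\circ + \theta^+ \P^+ + \theta^- \P^-$, use the one-sided Fourier support of $\theta^\pm,\eta^\pm$ to get $\psi^\pm \P^\pm = \P^\pm \psi^\pm \P^\pm$, and expand the nine-term products $\bU^\dag \bV$ and $\bV \bU^\dag$. The paper's own proof is simply a two-line sketch of exactly this computation, so you have merely filled in the details it leaves implicit.
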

\begin{proof}
	If $\psi^+$ contains only non-negative Fourier modes, then $\psi^+ \P^+ = \P^+ \psi^+\P^+$, with the corresponding result for functions with non-positive modes. 
	
	We have that $\bU^\dag = \bc \P^\circ + \theta^+ \P^+  + \theta^- \P^-$, and the result follows by expanding out $\bU^\dag \bV$ and $\bV \bU^\dag$.
\end{proof}

These operators are uniformly bounded:
\begin{lemma}\label{l:GraveOperators}
	For all $t \in [0,\zeta]$,
	\[ \|\bU\|_{W^\sigma}, \|\bU^\dag\|_{W^\sigma}, \|\bV\|_{W^\sigma}, \|\bV^\dag\|_{W^\sigma} \leq (1 + 2e^{qA}) M^{1/2}. \]
\end{lemma}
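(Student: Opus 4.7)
\textbf{Proof plan for Lemma~\ref{l:GraveOperators}.} The strategy is a straightforward triangle inequality using the block structure of $\bU$ and $\bV$ together with the multiplier bounds on $\theta^\pm, \eta^\pm$ from Proposition~\ref{p:EtaBounds}(b). The only subtlety is to verify that the ``projection'' pieces really are contractive on $W^\sigma$ and to set up the adjoints correctly.

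\textbf{Step 1: Fourier projections are contractive on $W^\sigma$.} The operators $\P^+, \P^-, \P^\circ$ are characteristic-function multipliers in Fourier space, so for any $\varphi \in W^\sigma$ and any $* \in \{+,-,\circ\}$,
\[ \| \P^* \varphi \|_{W^\sigma}^2 = \sum_{k \in \Z^*} \sigma(k)^2 |(\F\varphi)(k)|^2 \leq \| \varphi \|_{W^\sigma}^2, \]
so $\|\P^*\|_{W^\sigma \to W^\sigma} \leq 1$.

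\textbf{Step 2: Constant term.} The bound $M^{-1} \leq \md(x) \leq M$ gives $M^{-1/2} \leq \bc \leq M^{1/2}$ directly from $\bc^2 = \tfrac{1}{2\pi}\int \md^{-1}\,\dd x$; in particular both $\bc$ and $\bc^{-1}$ are bounded by $M^{1/2}$.

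\textbf{Step 3: Triangle inequality for $\bU$ and $\bV$.} Combining Steps 1--2 with Proposition~\ref{p:EtaBounds}(b), which gives $\|\theta^\pm\|_{W^\sigma}, \|\eta^\pm\|_{W^\sigma} \leq M^{1/2} e^{qA}$:
\[ \| \bU \|_{W^\sigma} \leq \bc \|\P^\circ\| + \|\P^+\| \|\theta^-\| + \|\P^-\| \|\theta^+\| \leq M^{1/2} + 2 M^{1/2} e^{qA}, \]
and identically for $\|\bV\|_{W^\sigma} \leq M^{1/2} + 2M^{1/2} e^{qA}$ using $\bc^{-1}$ and $\eta^\pm$ instead.

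\textbf{Step 4: Adjoints.} Since $\overline{\theta^+(x)} = \theta^-(x)$ on $\T$ (by the reality symmetry $\theta^-(\bar z) = \overline{\theta^+(z)}$), the $L^2(\T, \Leb/2\pi)$-adjoint of multiplication by $\theta^+$ is multiplication by $\theta^-$, and analogously for $\eta^\pm$. Combined with $(\P^*)^\dag = \P^*$ and $\bc \in \R$, a short calculation (as in the proof of Lemma~\ref{l:BVBURelation}) gives
\[ \bU^\dag = \bc \P^\circ + \theta^+ \P^+ + \theta^- \P^-, \qquad \bV^\dag = \bc^{-1} \P^\circ + \P^+ \eta^- + \P^- \eta^+. \]
Applying Steps 1--3 to these expressions yields the same bound $(1 + 2e^{qA}) M^{1/2}$.

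There is no real obstacle here --- the estimate is just a triangle inequality --- but the one thing to be careful about is not to invoke any multiplier bound on $\theta^\pm$ or $\eta^\pm$ beyond what Proposition~\ref{p:EtaBounds}(b) delivers, since these are the operators on which the whole Cholesky story rests. Note that the parameter $t$ in the statement does not enter the argument; the bound is uniform in $\sigma$ subject only to the Beurling weight hypothesis.
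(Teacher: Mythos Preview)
Your proof is correct and follows essentially the same route as the paper's own argument: the paper's proof is a one-line triangle inequality using $\|\P^+\|,\|\P^-\|,\|\P^\circ\|\leq 1$ in $W^\sigma$ together with Proposition~\ref{p:EtaBounds}(b), and then says ``the same argument goes for the other operators.'' You have simply made explicit the bound $\bc,\bc^{-1}\leq M^{1/2}$ and the form of the adjoints (which the paper records in the proof of Lemma~\ref{l:BVBURelation}), but there is no substantive difference.
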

\begin{proof}
	Since the projections $\P^+, \P^-, \P^\circ$ have norm $1$ in $W^\sigma$,
	\[ \|\bU\|_{W^\sigma} \leq \bc + \|\theta^+\|_{W^\sigma\to W^\sigma} + \|\theta^-\|_{W^\sigma\to W^\sigma} \leq (1 + 2e^{qA}) M^{1/2}. \]
	The same argument goes for the other operators.
%	Because $\P^+,\P^\circ,\P^-$ separate into Fourier components, we have
%	\[ \|\varphi\|^2_{\Hd^2_t} = \| \P^+\varphi\|_{\Hd^2_t}^2 + \|\P^\circ \varphi\|_{\Hd^2_t}^2 + \| \P^-\varphi\|_{\Hd^2_t}^2. \]
%	Because $\bU,\bV$ separate as direct sums of operators on the images of these three operators, we have
%	\begin{align*} \|\bV\varphi\|^2_{\Hd^2_t} &= \| \eta^+\P^+\varphi\|_{\Hd^2_t}^2 + \|\bc\P^\circ \varphi\|_{\Hd^2_t}^2 + \| \eta^-\P^-\varphi\|_{\Hd^2_t}^2 \\
%	&\leq \max\{ \|\eta^+\|_{\Hd^\infty_t}^2, \bc^2 \} \|\varphi\|_{\Hd^2_t}^2\\
%	&\leq 4^a M. \end{align*}
%	The same bound holds for $\bU^\dag$ as it has the same form, with the $\eta^\pm$ replaced by $\theta^\pm$.
\end{proof}

%We will then find it useful to define the following norm:
%\[ \| \mathcal{A} \|_s = \| \mathcal{A} \|_{\Hd^2_{-\zeta} \to \Hd^2_\zeta}. \]
%This is a very strong norm: it means that, when viewed as a tensor, $\psi \mathcal{A} \varphi$ is bounded for $\varphi, \psi$ in the adjoint norm $\Hd^2_{-\zeta}$ (in which very unpleasant hyperdistributions may be bounded).
%
%\begin{lemma}\label{l:PPlusPMinus}
%	\[\| (\P^- + \P^\circ) w \P^+ \|_s \leq \| w \|_{\Hd^\infty_\zeta}, \]
%	and similarly swapping $\P^-$ and $\P^+$. 
%\end{lemma}

\begin{lemma}\label{l:SwappingPAndFunction}
	If $\P^+ w = 0$, then $\P^+ w - w \P^+ = (\P^- + \P^\circ) w \P^+$.
%	\[ \| \P^+ w - w \P^+ \|_s \leq \| w \|_{\Hd^\infty_\zeta} \]
	The same statement holds swapping $\P^+$ and $\P^-$.
\end{lemma}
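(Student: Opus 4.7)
The plan is to reduce the stated identity to bookkeeping of Fourier supports. The hypothesis $\P^+ w = 0$ says that $w$, viewed as a function on $\T$, has its Fourier coefficients supported in $\Z^- \cup \{0\}$; when $w$ is then used as a multiplication operator, convolution by $\hat w$ cannot shift the Fourier support of an input upward into $\Z^+$ unless the input already has Fourier support meeting $\Z^+$.

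Concretely, I would insert the resolution of the identity $I = \P^+ + \P^- + \P^\circ$ to the right of the first $w$ in the commutator and compute $\P^+ w \P^\alpha$ for each $\alpha \in \{+,-,\circ\}$. The key observations are $\P^+ w \P^- = 0$ — because for any $\varphi$ the function $w\P^-\varphi$ has Fourier support in $(\Z^- \cup \{0\}) + \Z^- \subseteq \Z^-$ — and $\P^+ w \P^\circ = 0$, because $w \P^\circ \varphi$ is a scalar multiple of $w$ and thus has Fourier support in $\Z^- \cup \{0\}$. These collapse the expansion to $\P^+ w = \P^+ w \P^+$. Writing likewise $w \P^+ = (\P^+ + \P^- + \P^\circ) w \P^+$ and subtracting yields the claimed identity term by term. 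The symmetric statement with $\P^+$ and $\P^-$ interchanged is obtained by the identical argument after reflecting all Fourier indices $k \mapsto -k$, equivalently by replacing $w(z)$ with $\overline{w(\bar z)}$.

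There is no real analytic or algebraic obstacle: both sides are bounded operators on $L^2(\T)$ whenever $w \in L^\infty(\T)$, and the derivation is a purely symbolic manipulation. The only thing to verify with care is the additivity-of-Fourier-supports step in the two vanishing assertions above; everything else follows from the three projections forming a partition of unity.
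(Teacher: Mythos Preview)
Your approach is correct and essentially identical to the paper's: both arguments reduce to the single observation $\P^+ w = \P^+ w\P^+$ (via Fourier-support bookkeeping) and then subtract $w\P^+ = (\P^+ + \P^- + \P^\circ)w\P^+$. One remark worth making: carrying out that subtraction actually gives $\P^+ w - w\P^+ = -(\P^- + \P^\circ)w\P^+$, with a minus sign; the paper's own proof has the same slip, and it is this minus-sign version that is used when the lemma is applied in the proof of Lemma~\ref{l:SDecay}.
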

\begin{proof}
	For $k \leq 0$ the operator $\P^+ e^{-ik\cdot} \P^+$ reduces to $\P^+ e^{ik\cdot}$. Since we have that $w(z) = \sum_{k=0}^\infty \hat w_{-k} e^{-ikz}$ so $\P^+ w \P^+ = \P^+ w$. Then, 
	\[ \P^+ w - w \P^+ = (I - \P^+) w \P^+ = \P^\circ w \P^+ + \P^- w \P^+. \]
\end{proof}

%The following two results, which we don't need and shouldn't keep, show that $\bV$ is very close to being a Cholesky factor of $\M$:
%\begin{proposition} 
%	There exists a constant $C$ depending only on $M,a,??\zeta$ such that
%	\[ \|\bV^\dag \M^{-1} \bV - I\|_s \leq C. \]
%\end{proposition}
%
%\begin{proposition}
%	There exists a constant $C$ depending only on $M,a,??\zeta$ such that
%	\[ \|\bU^\dag \M \bU - I\|_s \leq C. \]
%\end{proposition}

We will want to show that in some sense $\bV$ is a good approximation of $\V$, the Cholesky factor of $\M$, and similarly $\bU$ is a good approximation of $\U$. We can understand this by attempting to ``strip'' $\M$ down to the identity, by considering $\bV^\dag \M^{-1} \bV$ and $\bU^\dag \M \bU$. (It is useful to study both at the same time: in Proposition~\ref{p:TildeOperators} we will use one-sided bounds on each one to complete the bounds on the other.) 

It will turn out in Proposition~\ref{p:TildeOperators} that all we need to know are the diagonal entries of these operators. In the following two propositions, we will show that these entries converge to those of the identity (c.f. \cite[Proposition~2.4.7, Theorem~7.2.1]{Simon05}).

%\red{See also \cite{Simon05} (2.4.34), (7.2.5)}
\begin{lemma}\label{l:SDashAreZero}
	For all $k \in \Z$,
	\[ s'_k = e^\dag_k \bV^\dag \M^{-1} \bV e_k - 1 = 0. \]
\end{lemma}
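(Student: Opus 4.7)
The plan is to unfold $s'_k + 1 = e_k^\dag \bV^\dag \M^{-1} \bV e_k$ as the inner product $\langle \bV e_k,\, \M^{-1}\bV e_k\rangle$ in the pairing making $\{e_j\}_{j\in\Z}$ orthonormal, and to exploit the fact that, applied to a single basis vector $e_k$, exactly one of the three summands in $\bV = \bc^{-1}\P^\circ + \eta^+\P^+ + \eta^-\P^-$ survives. After this reduction, $\M^{-1}$ acts simply as multiplication by $1/\md = \theta^+\theta^-$, and the whole quantity becomes an explicit integral over $\T$.

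For $k>0$ I would write $\bV e_k = \eta^+ e_k$ and, using $\theta^+\eta^+ = 1$, $\M^{-1}\bV e_k = \theta^- e_k$. Since $|e_k|^2 \equiv 1$ on $\T$, the inner product collapses to $\tfrac{1}{2\pi}\int_\T \overline{\eta^+}\,\theta^-\,\dd x$. The defining symmetry $\theta^-(\bar z)=\overline{\theta^+(z)}$ gives $\overline{\eta^+} = \eta^-$ on the real circle, and $\eta^-\theta^- \equiv 1$, so the integral equals $1$ and $s'_k = 0$. The $k<0$ case is identical after swapping $+$ and $-$ superscripts throughout.

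For $k=0$ only the $\bc^{-1}\P^\circ$ piece survives, giving $\bV e_0 = \bc^{-1}$ and $\M^{-1}\bV e_0 = \bc^{-1}\md^{-1}$, so $s'_0 + 1 = \bc^{-2} \cdot \tfrac{1}{2\pi}\int_\T \md^{-1}\,\dd x$. This equals $\bc^{-2}\cdot \bc^2 = 1$ exactly because $\bc$ was defined as $\sqrt{\tfrac{1}{2\pi}\int \md^{-1}\,\dd x}$.

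There is no genuine obstacle in this lemma: it is a direct verification that $\bV$ and the normalising constant $\bc$ have been calibrated so that the diagonal of $\bV^\dag \M^{-1}\bV$ is identically $1$. The real work, controlling off-diagonal entries and thereby showing that $\bV$ genuinely approximates the true Cholesky factor $\V$, is what the subsequent Proposition~\ref{p:TildeOperators} will handle; here everything reduces to book-keeping over the positive, negative, and zero Fourier modes.
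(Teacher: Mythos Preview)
Your proof is correct and follows essentially the same approach as the paper's: both reduce $\bV e_k$ to a single term via the projections, then use $\M^{-1}=\theta^+\theta^-$ together with $\eta^\pm\theta^\pm=1$ and $\overline{\eta^+}=\eta^-$ on $\T$ to collapse the sandwich to $e_k^\dag e_k=1$, with the $k=0$ case handled by the choice of $\bc$. The only difference is cosmetic---you first compute $\M^{-1}\bV e_k=\theta^\mp e_k$ and then pair, whereas the paper expands the full product in one line.
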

\begin{proof}
	We have
	\[ 1 + s'_k = (\bV e_k)^\dag \M^{-1} (\bV e_k). \]
	When $k=0$, 
	\[ \bV e_0 = \bc^{-1} e_0 = (e_0^\dag\md^{-1})^{-1/2} e_0 \]
	so 
	\[ 1 + s'_0 = (e_0^\dag \md^{-1})^{-1} e_0^\dag \md^{-1} e_0 = 1. \]
	On the other hand, when $k>0$ (the $k<0$ case follows by the same argument), $ \bV e_k = \eta^+ \P^+ e_k = \eta^+ e_k$ 
	so
	\[ 1 + s'_k = e_k^\dag \eta_- \eta_- \eta_+ \eta^+ e_k = e_k^\dag e_k = 1, \]
	as required.
\end{proof}

\begin{lemma}\label{l:SDecay}
	Let 
\[ s_k = e^\dag_k \bU^\dag \M \bU e_k - 1. \]
Then $s_k > -1$ and there exists a constant $C$ increasing in $M,A,q$ such that
	\begin{equation} \sum_{k\in \Z} \sigma(k)^2 |s_k| \leq C. \label{eq:DiagonalValueSum}\end{equation}
\end{lemma}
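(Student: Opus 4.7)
The plan is to compute $s_k = \langle \md \bU e_k, \bU e_k\rangle - 1$ in closed form for each $k$ separately, reduce it to a positive quantity controlled by the Fourier tail of $\theta^+$, and then sum using Proposition~\ref{p:EtaBounds}\ref{res:ThetaSymbol}. Nonnegativity will be a byproduct, so the bound $s_k > -1$ falls out except at $k=0$, which I will handle directly.

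First fix $k > 0$. Since $\theta^+ e_k$ is supported on Fourier modes $\geq k > 0$, $\P^- \theta^+ e_k = 0$, and the definition of $\bU$ gives $\bU e_k = \P^+ u$ with $u := \theta^- e_k$. Writing $\rho := (I - \P^+) u$ (the modes $\leq 0$ part of $u$), I would expand
\[ \langle \md u, u\rangle = \langle \md \P^+ u, \P^+ u\rangle + \langle \md \rho, \rho\rangle + 2\Re \langle \md \P^+ u, \rho\rangle. \]
The left side equals $\tfrac{1}{2\pi}\int \md |\theta^-|^2\,\dd x = \tfrac{1}{2\pi}\int 1\,\dd x = 1$ by the factorisation $\md\theta^+\theta^- = 1$. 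The key identity is that $\langle \md u, \rho\rangle = 0$: indeed $\md u = \md \theta^- e_k = \eta^+ e_k$ is supported on modes $\geq k > 0$, while $\rho$ is supported on modes $\leq 0$, so the integrand $\overline{\eta^+ e_k}\rho$ has strictly negative mean. Writing $\md u = \md\P^+ u + \md \rho$ and pairing against $\rho$ then yields $\langle \md \P^+ u, \rho\rangle = -\langle \md\rho,\rho\rangle$; substituting back gives the clean identity $s_k = \langle \md \rho,\rho\rangle \geq 0$. The case $k < 0$ is symmetric under swapping $\P^+ \leftrightarrow \P^-$ and $\theta^+ \leftrightarrow \theta^-$. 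For $k=0$, $\bU e_0 = \bc$ gives $1 + s_0 = \bc^2 \cdot \tfrac{1}{2\pi}\int \md \,\dd x$, which lies in $[1, M^2]$ by Cauchy--Schwarz and the two-sided bound on $\md$.

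To close, expand $\rho = \sum_{l \geq k} \hat\theta^-_{-l}\, e_{k-l}$; using $|\hat\theta^-_{-l}| = |\hat\theta^+_l|$ gives $\|\rho\|_{L^2}^2 = \sum_{l \geq k}|\hat\theta^+_l|^2$, hence $s_k \leq M \sum_{l\geq k}|\hat\theta^+_l|^2$. Swapping the order of summation and using that $\sigma$ is non-decreasing on $\N$,
\[ \sum_{k=1}^\infty \sigma(k)^2 s_k \leq M \sum_{l=1}^\infty |\hat\theta^+_l|^2 \sum_{k=1}^l \sigma(k)^2 \leq M \sum_{l=1}^\infty l\, \sigma(l)^2 |\hat\theta^+_l|^2, \]
which is bounded by $qA^2 M^2 e^{2qA}$ by Proposition~\ref{p:EtaBounds}\ref{res:ThetaSymbol}. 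Combining with the mirror bound for $k < 0$ and the $k=0$ term gives the constant $C$.

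The main obstacle is establishing the collapse $s_k = \langle \md \rho, \rho\rangle$: this requires exploiting \emph{both} algebraic roles of the Szeg\H{o} functions at once, namely that $\md = (\theta^+\theta^-)^{-1}$ on $\T$ (so the ``diagonal'' mass equals $1$) and that $\md\theta^- = \eta^+$ has the correct one-sided Fourier support needed to annihilate the cross term against $\rho$. Once that identity is in hand, the weighted summation is a routine swap of sums plus the already-proven Szeg\H{o} estimate.
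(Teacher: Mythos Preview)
Your proof is correct and follows essentially the same route as the paper: identify $\bU e_k = \P^+\theta^- e_k$, show $s_k = \langle \md\rho,\rho\rangle \ge 0$ with $\rho$ the nonpositive-mode tail of $\theta^- e_k$, bound by $M\sum_{l\ge k}|\hat\theta^+_l|^2$, and sum via Proposition~\ref{p:EtaBounds}\ref{res:ThetaSymbol}. Your way of killing the cross term---observing directly that $\md\theta^- e_k = \eta^+ e_k$ has Fourier support on modes $\ge k$ and is therefore $L^2$-orthogonal to $\rho$---is a slightly cleaner variant of the paper's computation via Lemma~\ref{l:SwappingPAndFunction}; one wording nit is that ``has strictly negative mean'' should read ``is supported on strictly negative Fourier modes.''
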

\begin{proof}
	In this case we have 
	\[ 1 + s_k = (\bU e_k)^\dag \M (\bU e_k), \]
	but the situation is more complicated than in Lemma~\ref{l:SDashAreZero} because, when compared with the previous proposition, the order of projection and multiplication in the definition of $\bU$ are now reversed compared with $\bV$. We can use Lemma~\ref{l:SwappingPAndFunction} to swap them back, but this means we pay the price of a small error, which we now bound.
	
	For $k=0$ we have that 
	\[ \bU e_0 = \P^+ \theta_- e_0 + \P^- \theta_+ e_0 + \bc \P^\circ e_0 = \bc e_0  \]
	so 
	\[ s_0 = 1 - \bc^2 e_0^\dag \md e_0 = 1 - \frac{1}{2\pi}\int_T \md^{-1}\,\dd x\, \frac{1}{2\pi} \int_\T \md\,\dd x \]
	from which one may extract that $|s_0| \leq (M-1)^2$. 
	
	For $k>0$ we have that
	\[ \bU e_k = \P^+ \theta_- e_k + \P^- \theta_+ e_k + \bc \P^\circ e_k = \P^+ \theta_- e_k, \] 
	and since $\M = \eta^+ \eta^- = \overline{\eta^-} \eta^-$,
	\begin{align*} (\bU e_k)^\dag \M (\bU e_k) &= (\eta^- \P^+ \theta^- e_k)^\dag \eta^- \P^+ \theta^- e_k.
	\end{align*}
	Now, we have that 
	\[ \eta^- \P^+ \theta^- e_k - e_k = \eta^- (\P^+ \theta^- - \theta^- \P^+) e_k = - \eta^- (\P^\circ + \P^-) \theta^- \P^+ e_k \]
	with the last equality by Lemma~\ref{l:SwappingPAndFunction}. Now,
	\[ (\eta^- \P^+ \theta^- e_k - e_k)^\dag e_k = - (\theta^- e_k)^\dag (\P^\circ + \P^-) \eta^+ \P^+ e_k = 0, \]
	so
	\begin{align*}	 (\bU e_k)^\dag \M (\bU e_k) &= e_k^\dag e_k + (\eta^- \P^+ \theta^- e_k - e_k)^\dag (\eta^- \P^+ \theta^- e_k - e_k)\\
	&= 1 + \|  -\eta^- (\P^\circ + \P^-) \theta^- \P^+ e_k \|^2_{L^2(\T)}\\
	&\leq 1 + \|\eta^-\|_{L^\infty(\T)}^2 \|(\P^\circ + \P^-) \theta^- e_k \|^2_{L^2(\T)}.
	\end{align*}
	Now,
	\[ \|(\P^\circ + \P^-) \theta^- e_k \|^2_{L^2(\T)} = \sum_{j = 0}^\infty |\F(\theta^- e_k)(-j)^2| = \sum_{j=0}^\infty |\hat \theta^-_{-j-k}|^2 = \sum_{l=k}^\infty |\hat \theta^+_{l}|^2. \]
	Consequently, and using the first part of Proposition~\ref{p:EtaBounds} to bound $\eta^-=\overline{1/\theta^+}$, we have 
	\[ s_k = |(\bU e_k)^\dag \M (\bU e_k) - 1| \leq M \sum_{l=k}^\infty |\hat \theta^+_{l}|^2. \]
	Combining the cases $k=0$, $k<0$, $k>0$ we find that for all $k \in \Z\backslash \{0\}$,
	\[ s_k \leq M \sum_{l=|k|}^\infty |\hat \theta^+_{l}|^2, \]
	which we can use to bound \eqref{eq:DiagonalValueSum}. In particular,
	\begin{align*}
	 \sum_{k\in \Z} \sigma(k)^2 |s_k| &\leq (M-1)^2 + 2 M \sum_{k=1}^\infty \sigma(k)^2 \sum_{l=k}^\infty |\hat \theta^+_{l}|^2 \\
	 &= (M-1)^2 + 2M \sum_{l=1}^\infty \left(\sum_{k=1}^l \sigma(k)^2\right) | \hat \theta^+_{l}|^2\\
	 &\leq (M-1)^2 + 2M \sum_{l=1}^\infty l \sigma(l)^2 | \hat \theta^+_{l}|^2 \leq M^2 (1 + qA^2 e^{2qA}), \\
	\end{align*}
	 using the last part of Proposition~\ref{p:EtaBounds} in the last part.
%	Using geometric sum identities we have  
%	\[ \sum_{k=1}^l \sigma_\zeta(k) = \cosh(1+l)\zeta \frac{\sinh l\zeta}{\sinh \zeta} \leq \frac{e\sigma_\zeta(l)}{2\sinh \zeta} \]
%	so
%	\[ \sum_{k\in \Z} \sigma_\zeta(k) |s_k| \leq (M-1)^2 + \frac{eM}{\sinh \zeta} \|\theta^+\|_{\Hd^2_\zeta} \leq (M-1)^2 + \frac{e^{C_p A} eM^2}{\sinh \zeta}\]
%	using Proposition~\ref{p:EtaBounds} in the last inequality.
%	Using Lemma~\ref{l:PPlusPMinus} in the second inequality, this gives us that 
%	\begin{align*} s_k &= | (\bU e_k)^\dag \M (\bU e_k) - 1|\\
%	&\leq \|\eta^-\|_{L^\infty}^2 \|(\P^\circ + \P^-) \theta^- \P^+ e_k \|_{L^2}^2\\
%	&\leq \|\eta^-\|_{L^\infty}^2 \|\theta^- \|_{\Hd^\infty_\zeta}^2 \|e_k\|_{\Hd^2{-\zeta}}^2\\
%	&\leq M\, 4^a M\, \sigma_\zeta(k)^{-1}, \end{align*}
%	where in the last line we used Proposition~\ref{p:EtaBounds}. Then,
%	\[ \sum_{k=0}^\infty |s_k| \sigma_\zeta(k) \leq 4^a M^2 \sum_{k=0}^\infty \sigma_\zeta(k) \sigma_\zeta(k)^{-1} = ???\]
\end{proof}

Now we can use these bounds to get strong bounds on $\tilde V, \tilde V^\perp, \tilde U, \tilde U^\perp$.

\begin{proposition}\label{p:TildeOperators}
	Let $\V \bV^{-1} = I + \tilde\V$ and $\U \bU^{-1} = I + \tilde \U$. Then there exists $C$ increasing in $M,A,q$ such that
	\[ \|\tilde\V\|, \|\tilde\V^\dag\|, \|\tilde\U\|, \|\tilde\U^\dag\| \leq C \]
	in the $L^2(\T) \to W^\sigma$ operator norm.
\end{proposition}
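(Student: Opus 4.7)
The plan is to bound all four operators via a unified Hilbert--Schmidt argument that leverages their block-triangular structure and Lemma~\ref{l:SDecay}. First, using $\bU^\dag = \bV^{-1}$ and $\V^{-1} = \U^\dag$ one checks directly that $(I + \tilde\V)(I + \tilde\U^\dag) = \V\bV^{-1}\cdot\bV\V^{-1} = I$ and $(I + \tilde\V^\dag)(I + \tilde\U) = \bU\V^\dag\cdot\U\bV^\dag = \bU\bV^\dag = I$. Hence $\tilde\V = -\tilde\U^\dag\,(\V\bU^\dag)$ and $\tilde\V^\dag = -\tilde\U\,(\bU\V^\dag)$; since $\V\bU^\dag$ and $\bU\V^\dag$ are bounded on $L^2$ (from $\|\V\|^2_{L^2\to L^2} \leq \|\M\|_\infty \leq M$ and Lemma~\ref{l:GraveOperators} at $\sigma \equiv 1$), it suffices to bound $\tilde\U$ and $\tilde\U^\dag$ from $L^2$ to $W^\sigma$.

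Both of these operator norms are dominated by the single quantity $Q := \sum_k \sigma(k)^2 \|\tilde\U e_k\|_{L^2}^2$. For $\tilde\U^\dag$, the $L^2$-adjoint formula gives $\|\tilde\U^\dag\|_{\mathrm{HS}(L^2,W^\sigma)}^2 = \sum_{j,k}\sigma(j)^2|\tilde\U_{kj}|^2 = Q$ after swapping the order of summation. For $\tilde\U$ itself, block upper-triangularity in the complex-exponential ordering means $\tilde\U e_k$ is supported on $\{e_j : |j| \leq |k|\}$, where $\sigma(j) \leq \sigma(k)$ by evenness and monotonicity of the Beurling weight, giving $\|\tilde\U e_k\|_{W^\sigma}^2 \leq \sigma(k)^2\|\tilde\U e_k\|_{L^2}^2$ and therefore $\|\tilde\U\|_{\mathrm{HS}(L^2,W^\sigma)}^2 \leq Q$.

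To bound $Q$, write $\tilde\U e_k = \U\bV^\dag e_k - e_k$. Using $\U^\dag\U = \M^{-1}$ and $|\theta^\pm|^2 = 1/\md$ on $\T$ gives $\|\U\bV^\dag e_k\|_{L^2}^2 = \|\theta^-\bV^\dag e_k\|_{L^2}^2$. For $k > 0$, $\bV^\dag e_k = \P^+\eta^- e_k$ and $\theta^-\eta^- = 1$ yield $\theta^-\bV^\dag e_k = e_k - \theta^-(\P^\circ + \P^-)\eta^- e_k$; the second term has only non-positive Fourier modes, so it is $L^2$-orthogonal to $e_k$, and $\|\U\bV^\dag e_k\|_{L^2}^2 = 1 + r_k$, where $r_k \leq M\sum_{l \geq |k|}|\hat\eta^+_l|^2$ is summable against $\sigma(k)^2$ by applying the third part of Proposition~\ref{p:EtaBounds} to $\eta^+$ in place of $\theta^+$ (the same proof goes through). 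The main technical obstacle is the cross term $\operatorname{Re}\langle e_k, \U\bV^\dag e_k\rangle$: expanding $\bV^\dag e_k$ in the $e_j$ basis and using that $p_k := \U e_k$ is an $L^2(\mu)$-orthonormal polynomial of degree $|k|$, this reduces by mode-support considerations to $\hat\eta^-_0$ times the leading Fourier coefficient of $p_k$, and one must show its distance from the OPUC limit value $1/\hat\eta^-_0$ is controlled such that $\sigma(k)^2$ times this squared distance is summable. This is extracted from Lemma~\ref{l:SDecay} by comparing the identity $\|\V^\dag\bU e_k\|^2 = 1 + s_k$ with the exact $\|\V^\dag\U e_k\|^2 = 1$ (since $\V^\dag\U = I$) and deducing the required quadratic deviation estimate. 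Combining yields $Q \leq C(M,A,q)$, completing the proof.
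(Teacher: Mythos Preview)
Your overall architecture is sound and parallels the paper's: both arguments ultimately rest on sandwiching the diagonal entries of $\tilde\U$ (equivalently $\tilde\V$) between quantities controlled by Lemma~\ref{l:SDecay} and a companion identity. Your Hilbert--Schmidt reduction via block-triangularity, and the reduction of $\tilde\V, \tilde\V^\dag$ to $\tilde\U, \tilde\U^\dag$ via the inverse relations $(I+\tilde\V)(I+\tilde\U^\dag)=I$, are both clean and correct.

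However, the crucial step---bounding the cross term---is not actually carried out. You correctly identify that $\langle e_k, \U\bV^\dag e_k\rangle = \hat\eta^-_0\, c_k$ with $c_k$ the leading coefficient of $p_k$, and correctly point to the identity $\|\V^\dag\bU e_k\|^2 = 1+s_k$. But ``deducing the required quadratic deviation estimate'' is a placeholder, not an argument. Moreover, your claim that you need $\sigma(k)^2$ times the \emph{squared} distance $|c_k - 1/\hat\eta^-_0|^2$ to be summable is wrong: since $\|\tilde\U e_k\|^2 = r_k - 2\tilde\alpha_k$ with $\tilde\alpha_k = \hat\eta^-_0 c_k - 1$, what you actually need is $\sum_k \sigma(k)^2\max(0,-\tilde\alpha_k) < \infty$, i.e.\ the \emph{linear} deviation summable against $\sigma(k)^2$, which is strictly stronger than what you state.

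The missing argument is this: $\V^\dag\bU = (I+\tilde\U)^{-1}$ is upper-triangular with diagonal entries $(1+\tilde\alpha_k)^{-1}$; expanding $\|(I+\tilde\U)^{-1}e_k\|^2 = 1+s_k$ then gives $-2\tilde\alpha_k/(1+\tilde\alpha_k) \leq s_k$, hence $\tilde\alpha_k \geq -s_k/(2+s_k)$, and therefore $\|\tilde\U e_k\|^2 \leq r_k + s_k$. This is exactly the paper's two-sided sandwich. The paper runs it on \emph{rows} $\tilde\V^\dag e_k, \tilde\U^\dag e_k$ instead of your columns, and thereby benefits from Lemma~\ref{l:SDashAreZero} giving $s'_k=0$ exactly, so only Lemma~\ref{l:SDecay} is needed; your column version requires both $s_k$ and your nonzero $r_k$, the latter demanding the $\eta^+$-analogue of Proposition~\ref{p:EtaBounds}(c) that you invoke. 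Once this sandwich is written out, your proof is complete and essentially equivalent to the paper's.
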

\begin{proof}[Proof of Proposition~\ref{p:TildeOperators}]
%	$\M$ is positive definite and $\bU$ is invertible, so $\bU^\dag \M \bU$ is positive definite. Suppose we make the Cholesky factorisation (OK need a general result saying when this is OK)
%\[ \bU^\dag \M \bU = (I+\tilde\V)(I+\tilde\V)^\dag. \]
Firstly, this is well-posed since by Lemma~\ref{l:BVBURelation}, $\bV^{-1} = \bU^\dag$ and $\U^{-1} = \bV^\dag$.

Let's notate the diagonal entries of $\tilde\V$ as $\alpha_k = e_k^\dag \tilde\V e_k$. Because $\V$ and $\bV$ have positive diagonal entries, so does $I + \tilde\V$, and hence the $\alpha_k > -1$.

Then for all $k \in \Z$,
\[ e_k^\dag (I+\tilde\V)(I+\tilde\V)^\dag e_k = \| \tilde \V^\dag e_k \|^2_{L^2} + 1 + 2 \alpha_k. \]
But we also know that 
\[ e_k^\dag (I+\tilde\V)(I+\tilde\V)^\dag e_k = e_k^\dag \bU^\dag \M \bU e_k = 1 + s_k \]
so
\begin{equation} 2 \alpha_k \leq \| \tilde \V^\dag e_k \|^2_{L^2} + 2 \alpha_k = s_k. \label{eq:V_alpha_s} \end{equation}

On the other hand let 
\[ \bV^\dag \M^{-1} \bV = (I+\tilde\U)(I+\tilde\U)^\dag.\]
Because $(\bU^\dag \M \bU)^{-1} = \bV \M^{-1} \bV$, we have that $I + \tilde\U = (I + \tilde\V)^{-1}$ and so the diagonal elements of $\tilde\U$ are $(1+\alpha_k)^{-1} -1 = -\tfrac{\alpha_k}{1+\alpha_k}$. Hence
\[ 1 + s'_k = e_k^\dag \bV^\dag \M^{-1} \bV e_k = e_k^\dag (I+\tilde\U)(I+\tilde\U)^\dag e_k = \| \tilde \U^\dag e_k \|^2_{L^2} + 1 - 2\tfrac{\alpha_k}{1+\alpha_k}, \]
so
\begin{equation}
 2\tfrac{\alpha_k}{1+\alpha_k} = \| \tilde \U^\dag e_k \|^2_{L^2} - s_k' \geq -s_k'. \label{eq:U_alpha_s} \end{equation}
As a result,
\[ 2\alpha_k \geq -\frac{2s_k'}{2+s_k'} \geq -s_k'. \]
Applying this to \eqref{eq:V_alpha_s} means we can extract a bound on $\bV^\dag e_k$:
\[ \| \tilde \V^\dag e_k \|^2_{L^2} \leq s_k + s_k'. \]
Similarly, applying the inequality in \eqref{eq:V_alpha_s} to \eqref{eq:U_alpha_s} gives us that 
\[  \| \tilde \U^\dag e_k \|^2_{L^2} = s_k' + 2\tfrac{\alpha_k}{1+\alpha_k} \leq s_k' + \alpha_k \leq s_k' + s_k. \]

This is nice for us because the $s_k, s_k'$ decay very quickly (from earlier propositions). Applying $\bV^\dag$ to some function $\varphi \in L^2$ gives that 
\begin{align*} \| \tilde \V^\dag \varphi\|_{L^2} &\leq \sum_{k \in \Z} \| \tilde \V^\dag e_k \|_{L^2} \F\varphi(k) \\
&\leq \left( \sum_{k\in\Z} \sigma(k)^2 \| \tilde \V^\dag e_k \|_{L^2}^2 \right)^{1/2} \left( \sum_{k\in\Z} \sigma(k)^{-2} |\F\varphi(k)|^2  \right)^{1/2}\\
&\leq \left( \sum_{k\in\Z} \sigma(k)^2 (s'_k + s_k)\right)^{1/2} \|\varphi\|_{W^{\sigma^{-1}}}\\
&\leq C \|\varphi\|_{W^{\sigma^{-1}}},
\end{align*}
with the last line coming from Lemmas~\ref{l:SDashAreZero} and~\ref{l:SDecay}.

Similarly,
\[ \| \tilde \U^\dag \varphi\|_{L^2} \leq  C \|\varphi\|_{W^{\sigma^{-1}}}. \]

By the duality of the $W^{\sigma^{-1}}$ and $W^\sigma$ norms,
\[ \| \tilde \U \psi\|_{W^\sigma}, \| \tilde \V \psi\|_{W^\sigma} \leq C \|\psi\|_{L^2} \]
for all $\psi \in L^2$, as required. 
On the other hand, 
\[\tilde \V^\dag = (I + \tilde \V^\dag)^{-1} - I = - \tilde \V^\dag (I + \tilde \V^\dag)^{-1} = -  (I + \tilde\V^\dag) \]
which gives
\[ \|\tilde \V^\dag \psi\|_{W^\sigma} \leq  \| I + \tilde\V^\dag \|_{L^2} \| \psi \|_{L^2} \leq \|\bU^\dag \M \bU\|_{L^2}^{1/2} C \| \psi \|_{L^2} \leq C \| \psi \|_{L^2}. \]
and similarly for $\tilde \U^\dag$.
\end{proof}

\begin{proof}[Proof of Theorem~\ref{t:CholeskyBound}]
For all four operators this result is a simple application of Lemma~\ref{l:GraveOperators} and Proposition~\ref{p:TildeOperators}. For example, let us consider 
\[ \U = (I + \tilde\U) \bU. \]
We have that 
\begin{align*}
\| \U \|_{W^\sigma} &\leq (1 + \|\tilde\U\|_{W^\sigma}) \| \bU\|_{W^\sigma}\\
&\leq (1 + \|\tilde\U\|_{L^2 \to W^\sigma}) \| \bU\|_{W^\sigma}\\
&\leq \left(1 + C\right) (1 + 2e^{qA}) M^{1/2},
\end{align*}
as required.
\end{proof}

The following result arises from the diagonal structure of the Dirichlet kernel in the (orthogonal) Fourier basis:
\begin{proposition}\label{p:DirichletError}
	For all $K \in \N$ and $\tau/\sigma$ decreasing, the Dirichlet kernel approximates the identity as
	\[ \|I - \D_K \|_{W^\sigma\to W^\tau} = \sup_{|k|\leq K} \frac{\tau}{\sigma} \leq \frac{\tau(K)}{\sigma(K)}. \]
\end{proposition}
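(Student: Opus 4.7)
The plan is to diagonalise everything in the Fourier basis. The Dirichlet kernel $\D_K = \F^{-1}\mathbb{1}_{|k|<K}\F$ is, by construction, the Fourier-space truncation to modes $|k|<K$; hence $I-\D_K = \F^{-1}\mathbb{1}_{|k|\geq K}\F$ is the Fourier-space projection onto the high-frequency tail. Since the complex exponentials $\{e_k\}_{k\in\Z}$ form an orthogonal basis of both $W^\sigma$ and $W^\tau$ with $\|e_k\|_{W^\sigma} = \sigma(k)$ and $\|e_k\|_{W^\tau} = \tau(k)$, the operator $I - \D_K$ is diagonal in this basis: it acts as the identity on $e_k$ for $|k|\geq K$ and as zero otherwise.

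From this diagonal action, I would compute the operator norm directly. For any $\varphi = \sum_k \hat\varphi_k\, e_k$,
\[ \|(I - \D_K)\varphi\|_{W^\tau}^2 = \sum_{|k|\geq K}\tau(k)^2|\hat\varphi_k|^2 = \sum_{|k|\geq K}\Bigl(\tfrac{\tau(k)}{\sigma(k)}\Bigr)^2 \sigma(k)^2 |\hat\varphi_k|^2 \leq \Bigl(\sup_{|k|\geq K}\tfrac{\tau(k)}{\sigma(k)}\Bigr)^2 \|\varphi\|_{W^\sigma}^2, \]
with equality attained by taking $\varphi$ concentrated on a single mode $k$ with $|k|\geq K$ realising the supremum. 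This gives $\|I-\D_K\|_{W^\sigma\to W^\tau} = \sup_{|k|\geq K}\tau(k)/\sigma(k)$.

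Finally, since $\sigma$ and $\tau$ are even Beurling weights and $\tau/\sigma$ is (by hypothesis) decreasing on $\N$, the supremum of $\tau(k)/\sigma(k)$ over $|k|\geq K$ is attained at $|k|=K$, giving the bound $\tau(K)/\sigma(K)$. There is really no obstacle here; the proposition is essentially a restatement of the defining spectral action of $\D_K$ together with monotonicity of $\tau/\sigma$.
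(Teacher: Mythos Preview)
Your proof is correct and is exactly the argument the paper has in mind: the paper does not give a detailed proof here, only the one-line remark that the result ``arises from the diagonal structure of the Dirichlet kernel in the (orthogonal) Fourier basis,'' which is precisely what you unpack. Note also that you have silently corrected a typo in the displayed statement: the supremum should be over $|k|\geq K$, not $|k|\leq K$, as your computation shows.
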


We can now prove the main theorem on polynomial approximation:
\begin{proof}[Proof of Theorem~\ref{t:GalerkinGood}]
	From Proposition~\ref{p:OrthoProjectionDecomposition} we have that 
	\[I - \P_K = \U^{-1} (I - \D_K) \U = \V^\dag (I-\D_K) \U\]
	 and so using Theorem~\ref{t:CholeskyBound},
	\[ \| I - \P_K \|_{W^\sigma \to W^\tau} \leq \|\V^\dag\|_{W^\tau} \|I - \D_K\|_{W^\sigma \to W^\tau} \|U\|_{W^\tau} \leq C_\P \| I - \D_K \|_{W^\sigma \to W^\tau}, \]
	where $C_{\P} = C_{\triangle}^2$. Combining this with Proposition~\ref{p:DirichletError} gives the required result.
	as required.
 \end{proof}

 \section{Transfer \rev{and Koopman} operator results}\label{s:Transfer}
 
% For all $\zeta >0$ let $\T_{\zeta;w} = \R + i(-\zeta,\zeta))/2\pi w\Z$. Let $v = 
\rev{Recall from Section~\ref{s:Results} that we can lift $f$ onto $\R$ by $\hat f$, and this has an inverse $v = \hat f^{-1}$.} We know $f$ is $w$-to-one for some $w\geq 2$, so we expect $v$ to be $2\pi w$-periodic.
% \begin{proposition}
% 	Suppose $f: \T \to \T$ is monotone and $w$-to-one with $|f'(x)| \geq \kappa^{-1}$ for all $x \in \T$. Suppose its inverse's lift $v: \R/2\pi w \Z \to \T$ extends holomorphically into the complex plane. Then:
% 	\begin{itemize}
% 		\item There exists $\zeta >0$ such that $v$ is analytic with $|\Re v'(z)| \geq 1$ for all $z \in \T_{\zeta;w}$.
% 		\item For all $t \in (0,\zeta]$, there exists $\kappa t < t$ such that $v(\T_{t;w}) \subseteq \T_{s}$.
% 		\item $\lim_{t\to 0} \kappa t/t \leq \kappa$. \red{OR} $s^n(t) \leq C\kappa^{n} t$??
% 	\end{itemize}  
% \end{proposition}
 
Define the operator $\L_\mu: \Hd^2_\zeta \circlearrowleft$ as follows:
 \begin{equation} (\L_\mu \varphi)(z) =  \sum_{j=0}^{w-1} J_\mu(z+2\pi j) \varphi(v(z+2\pi j)), \label{eq:TransferOperator}\end{equation}
 where 
 \begin{equation}J_\mu(z) = \sign f'(0) \frac{v'(z) \md(v(z))}{\md(z)}.\label{eq:JMuDef}\end{equation}
  Note that all $\L_\mu$ are conjugate to $\L_1$ \rev{(the transfer operator with respect to Lebesgue measure)} as $\L_\mu \varphi = \md^{-1} \L_1 (\md \varphi)$. %We will prove a uniform bound on $\L_\mu$ in $\Hd^2$ spaces:
 
\begin{theorem}\label{t:GoodL2Bound}
	Suppose $|\log f'|_{C^\alpha(\T_\zeta)} \leq D$. Then there exists $C_{\Hd^2}$ depending only on $D,\zeta,\alpha$ such that for $t \in [0,\zeta]$ and $u > \kappa t$,
 	\[ \| \L_\mu \|_{\Hd^2_u \to \Hd^2_t} \leq C_{\Hd^2} M^2. \]
\end{theorem}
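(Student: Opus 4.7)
The plan has two main reductions followed by a concrete contour estimate.

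First, I conjugate to reduce to the Lebesgue-density case. Since $\L_\mu\varphi = \md^{-1}\L_1(\md\,\varphi)$, where $\L_1$ is the transfer operator associated to $\md\equiv 1$, and both $\md$ and $\md^{-1}$ are holomorphic on $\T_\zeta$ with modulus at most $M$, the maximum principle gives that multiplication by $\md$ or $\md^{-1}$ is bounded by $M$ on each $\Hd^2_t$ for $t\leq\zeta$. This absorbs the factor $M^2$, leaving the task of bounding $\|\L_1\|_{\Hd^2_u\to\Hd^2_t}$ by a constant depending only on $D,\zeta,\alpha$. Moreover, $\L_1\varphi$ is automatically holomorphic on $\T_t$: for $z\in\T_t$ the contraction $|v'|\leq\kappa$ puts $v(z+2\pi j)$ inside $\T_{\kappa t}\subset\T_u$, so each summand of
\[ (\L_1\varphi)(z) = \sum_{j=0}^{w-1}\pm\, v'(z+2\pi j)\,\varphi(v(z+2\pi j)) \]
makes sense and is holomorphic.

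Second, I estimate boundary $L^2$ norms. For $0\leq s<t$, Cauchy--Schwarz in $j$ and the $2\pi w$-periodicity of $v'\cdot(\varphi\circ v)$ give
\[ \int_\T |(\L_1\varphi)(\theta\pm is)|^2\,d\theta \leq w\int_0^{2\pi w}|v'(\theta+is)|^2\,|\varphi(v(\theta+is))|^2\,d\theta. \]
The change of variable $y = v(\theta+is)$ rewrites the right-hand side as a contour integral $w\int_{\Gamma_s}|v'(v^{-1}(y))|\,|\varphi(y)|^2\,|dy|$, where $\Gamma_s\subset\overline{\T_{\kappa s}}$ is a single loop around $\T$. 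Since $|\varphi|^2$ is subharmonic on $\T_u$, I then invoke the Poisson representation $|\varphi(y)|^2\leq\int_{\partial\T_u}P_{\T_u}(y,w)|\varphi(w)|^2\,|dw|$ and Fubini to reduce matters to the uniform bound
\[ \sup_{w\in\partial\T_u}\int_{\Gamma_s}|v'(v^{-1}(y))|\,P_{\T_u}(y,w)\,|dy| \leq C(D,\zeta,\alpha). \]

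The main obstacle is precisely this last estimate, especially its uniformity as $u\downarrow\kappa t$. Written in $\theta$-coordinates the integrand above equals $|v'(\theta+is)|^2\,P_{\T_u}(v(\theta+is),w)$, and summing over inverse branches is exactly the action of $\L_1$ on the Poisson kernel $P_{\T_u}(\cdot,w)$. Here the Hölder distortion bound $\|\log v'\|_{C^\alpha(\T_\zeta)}\leq D$ plays its role: it supplies the standard bounded-distortion estimate needed to compare $\L_1 P_{\T_u}(\cdot,w)$ with the normalised Poisson integral $\tfrac{1}{2\pi}\int_\T P_{\T_u}(\cdot,w)\,d\theta = \tfrac{1}{2\pi}$, the error being absorbed in a constant depending only on $D,\zeta,\alpha$ and independent of the choice of $s,t,u$.
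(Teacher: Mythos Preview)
Your reduction is sound and genuinely different from the paper's route. The paper does not attack $\Hd^2$ directly: it first proves uniform bounds in $\Hd^\infty$ and in $\Hd^1$ (Propositions~\ref{p:GoodLInfBound} and~\ref{p:GoodL1Bound}, on larger spaces of harmonic functions), then squeezes the $\Hd^2$ norm between them via a spectral-radius trick on the self-adjoint operator $\tilde\L_{t,\mu}(\tilde\L_{t,\mu})^*$. Your subharmonicity-plus-Poisson argument bypasses both the adjoint machinery and the $\Hd^\infty$ bound, which is a cleaner path.

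However, both approaches land on the same hard estimate, and you have not actually proved it. After absorbing one factor $|v'|\leq\kappa$, the integral you need,
\[ \sup_{w\in\partial\T_u}\ \int_{\Gamma_s}P_{\T_u}(y,w)\,|dy|\ \leq\ C(D,\zeta,\alpha), \]
is exactly the content of the paper's Proposition~\ref{p:GoodL1Bound}. Calling this ``standard bounded distortion'' is the gap. Bounded distortion controls the \emph{magnitude} of $|v'|$ along the curve; the difficulty here is the \emph{geometry} of $\Gamma_s$ relative to the Poisson singularity at $w$. As $u\downarrow\kappa t$ the curve approaches $\partial\T_u$, the kernel blows up like $(\mathrm{dist})^{-1}$, and its integral along a generic curve through that region need not stay bounded (think of a curve meeting the boundary non-tangentially). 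What rescues the estimate is that $|\arg v'|\leq D\zeta^\alpha\leq\pi/3$ forces $\Gamma_s$ to be a graph over the real axis with slope at most $\sqrt3$, and near the singular point the slope varies only $\alpha$-H\"older in arc-length. The paper then splits into regions near and far from $w$ and carries out a careful kernel estimate on each; this is the substantive work of the theorem and does not follow from distortion alone.

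A minor slip: in $\theta$-coordinates the weight in your integrand is $|v'|^2$, not $|v'|$, so the identification with ``the action of $\L_1$ on the Poisson kernel'' is off by one factor. This is harmless for the bound but worth correcting.
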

This improves the bounds in the one-dimensional case of \cite[Lemma~5.3]{Bandtlow08} to be independent of $u - \kappa t$.

As part of proving this, we will prove a standard uniform bound in $\Hd^\infty$ spaces \`a la \cite{Slipantschuk20}, and, what is new, a uniform bound in $\Hd^1$ spaces:
\begin{proposition}\label{p:GoodLInfBound}
	There exists $C$ depending only on $D,\zeta,\alpha$ such that for $t \in [0,\zeta]$ and $u \geq \kappa t$,
	\[ \| \L_\mu \|_{\Hd^\infty_u \to \Hd^\infty_t} \leq C M^2. \]
\end{proposition}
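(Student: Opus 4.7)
The plan is a direct, branch-by-branch pointwise bound on $\L_\mu \varphi$ over $\T_t$, using contraction of $v$ to stay in the domain, the $\md$-bounds to control $J_\mu$, and the $D$-bound to control the number of branches.

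First, I would check that the preimages stay in the target strip. Since $v:\R\to\R$ is real-valued on the real axis with $|v'(z)|\leq \kappa$ on $\R_\zeta$, integrating $v'$ along a vertical segment gives, for any $z \in \T_t$ and any $j \in \{0,\ldots,w-1\}$,
\[ |\Im v(z+2\pi j)| \leq \kappa |\Im z| \leq \kappa t \leq u. \]
So $v(z+2\pi j) \in \overline{\T_u}$ and thus $|\varphi(v(z+2\pi j))| \leq \|\varphi\|_{\Hd^\infty_u}$. Next, because $v(z+2\pi j) \in \T_\zeta$, the assumption $M^{-1}\leq |\md|\leq M$ on $\T_\zeta$ combined with $|v'|\leq\kappa$ gives the pointwise weight bound
\[ |J_\mu(z+2\pi j)| = |v'(z+2\pi j)|\,\frac{|\md(v(z+2\pi j))|}{|\md(z)|} \leq \kappa M^2. \]

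The only non-routine step, and hence the main (mild) obstacle, is to bound the degree $w$ of $f$ in terms of $D$. The hypothesis $\|\log v'\|_{C^\alpha(\T_\zeta)}\leq D$ gives in particular $v'(x) \geq e^{-D}$ for real $x$; combined with the periodicity relation $v(x+2\pi w) = v(x)+2\pi$ (which comes from $f$ being $w$-to-one), integration yields
\[ 2\pi = \int_0^{2\pi w} v'(x)\,\dd x \geq 2\pi w e^{-D}, \qquad \text{so} \qquad w \leq e^D. \]

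Summing the $w$ branches then gives, for every $z \in \T_t$,
\[ |\L_\mu\varphi(z)| \leq \sum_{j=0}^{w-1} |J_\mu(z+2\pi j)|\,\|\varphi\|_{\Hd^\infty_u} \leq w\kappa M^2\,\|\varphi\|_{\Hd^\infty_u} \leq e^D M^2\,\|\varphi\|_{\Hd^\infty_u}, \]
proving the proposition with $C = e^D$, which depends only on $D$ (and trivially on $\zeta,\alpha$).
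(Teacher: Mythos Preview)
Your proof is correct. The overall structure matches the paper's---show $v(z+2\pi j)\in\T_{\kappa t}\subset\T_u$, bound $|J_\mu|\leq M^2|v'|$, then control $\sum_j|v'(z+2\pi j)|$---but you handle the last step differently. The paper uses the H\"older distortion hypothesis to compare $|v'(z+2\pi j)|$ to the average of $|v'|$ over a real interval of length $2\pi$, so that the sum telescopes to (a multiple of) the total image length $2\pi$, yielding $\sum_j|v'(z+2\pi j)|\leq e^{D(\pi^2+\zeta^2)^{\alpha/2}}$ independently of $w$. You instead bound each term by $\kappa$ and bound the number of branches via $|v'|\geq e^{-D}$ on the real axis, giving $\sum_j|v'|\leq w\kappa\leq \kappa e^D$.

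Your route is more elementary and in fact produces a slightly smaller constant (since $(\pi^2+\zeta^2)^{\alpha/2}>1$). The paper's distortion argument, on the other hand, is the standard transfer-operator mechanism (morally $\L_1 1=1$) and is what one would reach for when $|v'|\leq\kappa$ is not available or when iterating: it gives the same bound for $\L_\mu^n$ without picking up a factor $w^n$. The paper also phrases its proof on the larger harmonic spaces $\Ad^\infty$, which it needs later for the $\Hd^1$ and $\Hd^2$ bounds; your argument would carry over there unchanged via the maximum principle for harmonic functions. One small remark: your line ``$v'(x)\geq e^{-D}$'' tacitly assumes $f$ is orientation-preserving; for orientation-reversing $f$ the same inequality holds for $|v'(x)|$, which is all you use.
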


\begin{proposition}\label{p:GoodL1Bound}
	There exists $C$ depending only on $D,\zeta,\alpha$ such that for $t \in [0,\zeta]$ and $u > \kappa t$,
	\[ \| \L_\mu \|_{\Hd^1_u \to \Hd^1_t} \leq C M^2 \]
\end{proposition}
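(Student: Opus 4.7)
The plan is to first reduce to the Lebesgue transfer operator $\L_1$ via the intertwining $\L_\mu \varphi = \md^{-1}\L_1(\md\varphi)$: since $\md^{\pm 1}$ are holomorphic on $\T_\zeta$ with modulus at most $M$, multiplication by them is bounded on $\Hd^1_u$ with norm at most $M$, so $\|\L_\mu\|_{\Hd^1_u \to \Hd^1_t} \leq M^2 \|\L_1\|_{\Hd^1_u \to \Hd^1_t}$. It thus suffices to produce a constant depending only on $D, \zeta, \alpha$ for $\L_1$.

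For $s \in [0,t)$, the triangle inequality in the defining sum of $\L_1$ combined with the change of variable $\eta = v(\theta + is)$ gives
\[
\int_\T |\L_1\varphi(\theta + is)|\,d\theta
\;\leq\; \int_0^{2\pi w} |v'(\theta + is)|\,|\varphi(v(\theta + is))|\,d\theta
\;=\; \int_{\gamma_s^+} |\varphi(\eta)|\,|d\eta|,
\]
where $\gamma_s^+ := v(\R + is)$ is a closed contour winding once around $\T$. The analyticity of $v$ together with $|v'| \leq \kappa$ guarantees that $\gamma_s^+ \subset \T_{\kappa s} \subsetneq \T_u$, has length at most $2\pi w\kappa$, and has slope bounded by $\kappa$; a symmetric estimate holds for $\gamma_s^- := v(\R - is)$.

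The technical heart of the proof is then the \emph{uniform} contour estimate
\[
\int_{\gamma_s^+} |\varphi|\,|d\eta| \;\leq\; C\,\|\varphi\|_{\Hd^1_u},
\]
with $C$ independent of $s < t$ and $u > \kappa t$. I would establish this by invoking an $\Hd^1_u = \Hd^2_u \cdot \Hd^2_u$ factorization on the cylinder (the annulus-Hardy-space analogue of the classical inner--outer factorization of F. Riesz, due to Sarason): write $\varphi = \varphi_1 \varphi_2$ with $\|\varphi_i\|_{\Hd^2_u}^2 \leq C\|\varphi\|_{\Hd^1_u}$. Cauchy--Schwarz then reduces matters to the quadratic contour bound $\int_{\gamma_s^+} |\psi|^2\,|d\eta| \leq C'\|\psi\|_{\Hd^2_u}^2$, which is precisely the Carleson embedding $\Hd^2_u \hookrightarrow L^2(|d\eta|_{\gamma_s^+})$: because the slope of $\gamma_s^+$ is bounded by $\kappa$, its length inside any Carleson box of side $h$ is $O(h)$, so the measure $|d\eta|_{\gamma_s^+}$ is Carleson with constant depending only on $\kappa$.

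Combining these gives the uniform contour bound, and summing over the $\gamma_s^\pm$ contributions and taking $\sup_{s<t}$ yields $\|\L_1\|_{\Hd^1_u \to \Hd^1_t} \leq C$, hence the result. The main obstacle is verifying the Carleson embedding on the cylinder $\T_u$ with constants uniform in $u > \kappa t$: because $\T_u$ is doubly connected and its geometry scales with $u$, one cannot quote the classical simply-connected Carleson theorem directly, and must either develop a direct Fourier-series argument on the cylinder or carefully transfer bounds from the annulus theory.
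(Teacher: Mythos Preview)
Your reduction to $\L_1$ via conjugation by $\md$ is clean and correct. The approach after that --- bounding the contour integral $\int_{\gamma_s^\pm}|\varphi|\,|d\eta|$ uniformly by $\|\varphi\|_{\Hd^1_u}$ via an $\Hd^1 = \Hd^2 \cdot \Hd^2$ factorization followed by a Carleson embedding --- is genuinely different from the paper's direct Poisson-kernel estimate, and is in principle a viable route. There are, however, two real difficulties, only one of which you flag.

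First, a small correction: the slope of $\gamma_s^+$ is \emph{not} bounded by $\kappa$. The inequality $|v'|\leq\kappa$ controls the speed of the parametrization, not the argument of the tangent. The slope is instead controlled by $|\arg v'|\leq |\Im\log v'|\leq D\zeta^\alpha\leq\pi/3$ via the H\"older assumption on $\log v'$, giving slope at most $\sqrt{3}$. This is harmless for the Carleson-box argument but should be fixed.

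Second, and more seriously: the strong factorization $\Hd^1_u=\Hd^2_u\cdot\Hd^2_u$ on the cylinder is not as clean as on the disc. Sarason's annulus theory gives an inner--outer decomposition, but with a modular singular factor that obstructs taking holomorphic square roots; in general one only has a \emph{weak} factorization $\varphi=\sum_j g_j h_j$ with $\sum_j\|g_j\|_{\Hd^2_u}\|h_j\|_{\Hd^2_u}\lesssim\|\varphi\|_{\Hd^1_u}$. That still suffices for your Cauchy--Schwarz step, but the implicit constant depends a priori on the conformal modulus of the annulus, hence on $u$, and you need it uniform as $u\downarrow\kappa t$. You correctly identify this uniformity issue for the Carleson embedding, but the factorization step carries exactly the same risk and you do not address it.

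The paper sidesteps both issues by working in the larger space $\Ad^1_u$ of harmonic functions and using subharmonicity of $|\varphi|$: one has $|\varphi|\leq\mathcal{I}_u|\varphi|$ pointwise, where $\mathcal{I}_u$ is the explicit Poisson-type kernel for the strip. The contour bound then reduces to the pure kernel estimate
\[
\int_{v(w\T+it)} k_u(\theta\pm iu - z)\,|dz|\;\leq\; C,
\]
which the paper proves by hand, splitting the curve into a piece close to $\partial\T_u$ (where the H\"older distortion bound forces the curve to be nearly horizontal, keeping the near-singular kernel integrable) and a piece bounded away from it. This is in effect an $L^1$-Carleson estimate established directly for the specific curve, with no appeal to factorization or abstract embedding theorems; the required uniformity in $u$ comes out of the explicit computation. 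Your route is conceptually appealing but leaves the uniformity of the factorization constant as a genuine gap.
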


The proofs of these three results are given in the Appendix.

The Perron-Frobenius operator $\L_\mu$, {\rev which due to its simple formula \eqref{eq:TransferOperator} extends naturally to $\L^2(\mu)$}, is in fact just the adjoint of the Koopman operator:
\begin{proposition}\label{p:DualL2}
	For all $\varphi,\psi \in L^2(\mu)$,
	\[ \int_\T \overline{\K \psi}\, \varphi\,\dd \mu = \int_{\T} \overline{\psi}\,\L_\mu\varphi\,\dd\mu \]
\end{proposition}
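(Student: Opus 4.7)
The plan is to compute the left-hand side via a single change of variables on the lift, then exploit periodicity to reassemble the defining sum of $\L_\mu$. Since $f$ is uniformly expanding, $f'$ has constant sign on $\T$, so the lift $\hat f$ is strictly monotone and maps $[0,2\pi]$ bijectively onto an interval $I$ of length $2\pi w$, with inverse $v: I \to [0,2\pi]$.

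First I would unfold the integral: writing $\dd\mu = \md\,\dd x$ and using that $\psi\circ f = \psi\circ \hat f$ on $[0,2\pi]$, substitute $y = \hat f(x)$, so $x = v(y)$ and $\dd x = \sign f'(0)\, v'(y)\,\dd y$. This gives
\[ \int_\T \overline{\K\psi}\,\varphi\,\dd\mu = \sign f'(0)\int_{I} \overline{\psi(y)}\,\varphi(v(y))\,\md(v(y))\,v'(y)\,\dd y, \]
where we have used that $\psi$, being $2\pi$-periodic, is well-defined as a function of $y \in \R$.

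Next I would partition $I$ into $w$ contiguous intervals of length $2\pi$ and translate each to a common reference interval (say $[c,c+2\pi]$ for $c$ any endpoint). On the $j$-th piece we shift the variable by $2\pi j$, so the summand becomes
\[ \overline{\psi(y+2\pi j)}\,\varphi(v(y+2\pi j))\,\md(v(y+2\pi j))\,v'(y+2\pi j). \]
Now $\psi$ is $2\pi$-periodic, so $\overline{\psi(y+2\pi j)} = \overline{\psi(y)}$, and $\md$ is $2\pi$-periodic, so $\md(y+2\pi j) = \md(y)$. Factoring $\md(y)$ out of the sum and using the identity $v'(z)\md(v(z)) = \sign f'(0)\, J_\mu(z)\md(z)$ from \eqref{eq:JMuDef}, the integrand collapses to $\overline{\psi(y)}\,\md(y)\sum_{j=0}^{w-1} J_\mu(y+2\pi j)\varphi(v(y+2\pi j))$, where the two $\sign f'(0)$ factors cancel. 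This sum is precisely $(\L_\mu\varphi)(y)$ by \eqref{eq:TransferOperator}.

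Finally, since the resulting integrand is $2\pi$-periodic, integrating over $[c,c+2\pi]$ is the same as integrating over $\T$, yielding $\int_\T \overline{\psi}\,\L_\mu\varphi\,\md\,\dd x = \int_\T \overline{\psi}\,\L_\mu\varphi\,\dd\mu$ as required. The only subtlety worth tracking is the sign arising from a possibly orientation-reversing $f$; this is exactly why $J_\mu$ carries the factor $\sign f'(0)$, which absorbs the $\sign$ of $v'$ so that $v'(y)\,\dd y$ (with its original sign) matches $J_\mu/\md$ correctly. There is no genuine analytic obstacle: the argument is bookkeeping built on one substitution and periodicity, and it extends to general $\varphi,\psi\in L^2(\mu)$ by continuity of both sides once it is established for, say, trigonometric polynomials (for which all manipulations are manifestly justified).
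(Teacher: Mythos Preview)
Your argument is correct and is essentially the same as the paper's: both amount to the change of variables $y=f(x)$ for the $w$-to-one map, with the paper doing this directly on $\T$ while you unfold to the lift and then repartition into $w$ copies, which is the same computation. Your handling of the sign via $\sign f'(0)$ and the definition of $J_\mu$ is exactly the point the paper's (terser) proof gestures at with ``since $\sign v' = \pm f$''.
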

\begin{proof}
	Suppose $\psi,\varphi \in L^2$. Then
	\[ \int_\T \overline{\K \psi}\, \varphi\,\dd\mu = \int_{\T} \overline{\psi}\circ f\, \varphi\,\dd\mu. \]
	With a $w$-to-one change of variables $f(x) = y$ we find
	\[ \int_{\T} \overline{\psi}\circ f\, \varphi\,\dd\mu = \int_{\T} \sum_{j=0}^{w-1} \overline{\psi(y)} \varphi(v(y+2\pi j)) \mu(v(y+2\pi j)) |v'(y+2\pi j)|\,\dd y\]
	from which, since $\sign v' = \pm f$, the required identity follows.
\end{proof}

\begin{proposition}\label{p:DualGeneral}
	For all $t \in (0,\zeta]$ and $u \in (\kappa t,\zeta]$, $\K$ extends to a bounded operator on $\Hd^2_{-t} \to \Hd^2_{-u}$, and for all $\varphi \in \Hd^2_u, \psi \in \Hd^2_{-t}$,
	\[ \int_\T \overline{\K \psi}\, \varphi\,\dd\mu = \int_{\T} \overline{\psi}\,\L_\mu\varphi\,\dd\mu \]
\end{proposition}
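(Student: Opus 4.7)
The strategy is pure duality. Since $\Hd^2_{-s}$ is by definition the dual of $\Hd^2_s$ under the Lebesgue pairing $\phi^\dag\psi = \int_\T \bar\phi\,\psi\,\dd x$, it suffices to display $\K$ as the Banach-space adjoint of a bounded operator on $\Hd^2_u$. The natural candidate is $\L_1$, the unweighted transfer operator, i.e.\ the operator \eqref{eq:TransferOperator} specialised to $\md\equiv 1$ (so $M=1$ in Theorem~\ref{t:GoodL2Bound}). Theorem~\ref{t:GoodL2Bound} then delivers $\|\L_1\|_{\Hd^2_u\to\Hd^2_t}\leq C_{\Hd^2}$ whenever $u>\kappa t$, which is exactly the range of $t$ and $u$ claimed in the proposition.

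First I would define $\K\psi\in\Hd^2_{-u}$ for arbitrary $\psi\in\Hd^2_{-t}$ by duality: set
\[ (\K\psi)^\dag\phi := \psi^\dag(\L_1\phi), \qquad \phi\in\Hd^2_u. \]
Since $\L_1\phi\in\Hd^2_t$ by the above, the right-hand side is a well-defined bounded conjugate-linear functional of $\phi$, yielding $\|\K\psi\|_{\Hd^2_{-u}}\leq C_{\Hd^2}\|\psi\|_{\Hd^2_{-t}}$. For $\psi\in L^2$ (which embeds continuously in every $\Hd^2_{-t}$) this agrees with the pointwise definition $\K\psi=\psi\circ f$: indeed Proposition~\ref{p:DualL2} specialised to $\mu=\Leb$ gives exactly $(\psi\circ f)^\dag\phi=\psi^\dag(\L_1\phi)$, so the extension is consistent with the earlier action of $\K$.

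The integral identity then follows from the algebraic relation $\L_1(\md\varphi)=\md\,\L_\mu\varphi$, which is just a rearrangement of the identity $\L_\mu=\md^{-1}\L_1\md$ recorded after \eqref{eq:JMuDef}. Since $\md$ extends analytically to $\T_\zeta\supset \T_u$ with $|\md|\leq M$, multiplication by $\md$ is a bounded endomorphism of $\Hd^2_u$ (directly from the integral definition of the Hardy norm), so $\md\varphi\in\Hd^2_u$ whenever $\varphi\in\Hd^2_u$. Chaining definitions,
\[ \int_\T\bar\psi\,\L_\mu\varphi\,\dd\mu = \psi^\dag(\md\,\L_\mu\varphi) = \psi^\dag(\L_1(\md\varphi)) = (\K\psi)^\dag(\md\varphi) = \int_\T\overline{\K\psi}\,\varphi\,\dd\mu, \]
each pairing being finite. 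No step is especially hard: all the substantive estimates sit inside Theorem~\ref{t:GoodL2Bound}, and the only easy-to-overlook point is the $\Hd^2_u$-multiplier property of $\md$, which is immediate from its analyticity and boundedness on $\T_\zeta$.
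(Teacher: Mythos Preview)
Your proof is correct and follows essentially the same route as the paper: both define the extension of $\K$ as the Banach-space adjoint of $\L_1:\Hd^2_u\to\Hd^2_t$ (invoking Theorem~\ref{t:GoodL2Bound} with $\md\equiv1$), check consistency on $L^2$ via Proposition~\ref{p:DualL2}, and then pass to the $\mu$-pairing. You are a little more explicit than the paper in isolating the identity $\L_1(\md\varphi)=\md\,\L_\mu\varphi$ and the $\Hd^2_u$-multiplier property of $\md$ to convert between the Lebesgue and $\mu$ pairings; the paper bundles this into the phrase ``the adjoint relation is preserved'' by density, but the content is the same.
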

\begin{proof}
	Suppose $\psi,\varphi \in L^2(\mu) = L^2(\Leb,\T)$. Then the same adjoint relationship is obeyed as in Proposition~\ref{p:DualL2}, and 
	\begin{align*} \|\K\psi\|_{\Hd^2_{-u}} &= \sup_{\|\chi\|_{\Hd^2_u} = 1} \left|\int_\T \K \psi\, \chi\,\dd x\right|\\
	&= \sup_{\|\chi\|_{\Hd^2_u} = 1}  \left|\int_{\T} \psi\, \L_1\chi\,\dd x\right| \\
	&\leq  \|\L_1\|_{\Hd^2_t \to \Hd^2_u} \|\psi\|_{\Hd^2_{-t}} \leq C_{\Hd^2} \|\psi\|_{\Hd^2_{-t}}.\end{align*}
	applying Theorem~\ref{t:GoodL2Bound} to $\L_1$. By completion, $\K: \Hd^2_{-t} \to \Hd^2_{-u}$ is bounded in norm. Since by considering the Fourier duality, $L^2(\mu) = L^2(\Leb/2\pi)$ is dense in $\Hd^2_{-t}$, the adjoint relation is preserved. 
\end{proof}

%\begin{proposition}\label{p:ProjectionDual}
%	For all $t \in (0,\zeta]$, $u \in [0,t)$, $K \in \N$, $\P_K$ extends to a bounded operator $\Hd^2_{-u} \to \Hd^2_{-t}$, and for all $\varphi \in \Hd^2_u, \psi \in \Hd^2_{-t}$,
%	\[ \int_\T \overline{\P_K \psi}\, \varphi\,\dd\mu = \int_{\T} \overline{\psi}\,\P_K\varphi\,\dd\mu. \]
%\end{proposition}
%\begin{proof}
%	Suppose $\psi,\varphi \in L^2(\mu) = L^2(Leb,\T)$. Then the relation obtains because $\P_K$ is self-adjoint in $L^2(\mu)$. On the other hand, $\
%\end{proof}

%\begin{proposition}
%	For all $t \in (0,\zeta]$, $\L: \Hd^2_t \circlearrowleft$ and $\K: \Hd^2_{-t} \circlearrowleft$ are compact.
%\end{proposition}
%\begin{proof}
%	For all $u \in [0,t)$, inclusion $I: \Hd^2_u \to \Hd^2_t$ is compact since from Proposition~\ref{p:DirichletError}, $\{\D_K\}_{K\in\N}$ are a family of finite-rank operators $\Hd^2_u \to \Hd^2_t$ with $\D_K \to I$ in operator norm. Then $\L: \Hd^2_t \to \Hd^2_t$ can be seen as the composition of this inclusion with the bounded operator $\L: \Hd^2_t \to \Hd^2_u$, with some $u \in (\kappa t,t)$. By duality the same argument holds for the Koopman operator.
%\end{proof}
\begin{proof}[Proof of Theorem~\ref{t:Deterministic}]
	By the last proposition we have that $\K$ extends to an endomorphism of $\Hd^2_t$.
	
	We first show that $\K: \Hd^2_{-t} \circlearrowleft$ is compact. For all $u \in [0,t)$, inclusion $I: \Hd^2_t \to \Hd^2_u$ is compact since from Proposition~\ref{p:DirichletError}, $\{\D_K\}_{K\in\N}$ are a family of finite-rank operators $\Hd^2_t \to \Hd^2_u$ with $\D_K \to I$ in operator norm. By duality, inclusion $I: \Hd^2_{-u} \to \Hd^2_{-t}$ is also compact. Then for any choice of $u \in (\kappa t,t)$, $\K: \Hd^2_{-t} \to \Hd^2_{-t}$ is the composition of the bounded operator $\K: \Hd^2_{-u} \to \Hd^2_{-t}$ with this compact inclusion, giving compactness of $\K$ in $\Hd^2_{-t}$.
	
	Let us consider $\K_K = \P_K \K$. From Theorem~\ref{t:GalerkinGood} and the fact that $\sigma_u(K) / \sigma_t(K) \leq 2 e^{-K(t-u)}$, we have that
	\[ \| \P_K - I \|_{\Hd^2_t \to \Hd^2_u} \leq 2C_{\P}e^{-K(t-u)}, \]
	but we need to convert this into the dual norm. Because $\P_K$ is self-adjoint in $L^2(\mu)$, we have for $\psi,\chi \in L^2(\Leb,\T)$ (i.e. in $L^2(\mu)$) that
	\[ \int \P_K \psi\, \chi\,\dd x = \int \psi\, \md\P_K(\md^{-1}\chi)\,\dd x \]
	and so, by the same argument as in Proposition~\ref{p:DualGeneral}, $P_K$ and $I - \P_K$ extend to bounded operators $\Hd^2_{-u} \to \Hd^2_{-t}$, with
	\[ \| I - \P_K \|_{\Hd^2_{-u} \to \Hd^2_{-t}} \leq \|\md\|_{\Hd^\infty_t} \|I - \P_K\|_{\Hd^2_t \to \Hd^2_u} \|\md\|_{\Hd^\infty_u} \leq M^2 \cdot 2C_{\P}e^{-K(t-u)} \]
	
	Consequently, using Proposition~\ref{p:DualGeneral} and Theorem~\ref{t:GoodL2Bound} we obtain that for all $u > \kappa t$,
	\begin{align}\label{eq:KProjDistance}
	\| \K - \P_K \K \|_{\Hd^2_{-t}} &\leq \| I - \P_K \|_{\Hd^2_{-u} \to \Hd^2_{-t}} \| \K \|_{\Hd^2_{-t} \to \Hd^2_{-u}} \\ &\leq 2M^2 C_{\Hd^2} C_{\P} e^{-K(t-u)}.
	\end{align}
	Taking the infimum over $u$ we get the required result for the first term being bounded. 
	
	Similarly, for all $v < t$, we have
		\begin{align}\label{eq:KProjDistance2}
	\| \K - \K\P_K \|_{\Hd^2_{-\kappa t}} &\leq \| \K \|_{\Hd^2_{-v} \to \Hd^2_{-\kappa t}}  \| I - \P_K \|_{\Hd^2_{-\kappa t} \to \Hd^2_{-v}} \\ &\leq 2M^2 C_{\Hd^2} C_{\P} e^{-K(v - \kappa t)}
	\end{align}
	and can take an infimum over $v$ for the second term.
\end{proof}

\begin{proof}[Proof of Corollary~\ref{c:Spectrum}]
%	Multiplication by $g \in \Hd^\infty_t$ as an operator on $\Hd^2_{-t}$ is bounded in norm by $\|g\|_{\Hd^\infty_t}$, so $$\|\K_{g,K} - \K_g\|_{\Hd^2_{-t}} = \|g\|_{\Hd^\infty_{t}} \|\K_K - \K\|_{\Hd^2_{-t}} \leq  C \|g\|_{\Hd^\infty_{t}} e^{-(1-\kappa) t K}.$$
%	
%	Since the $\K_K$ are collectively compact in $\Hd^2_{-t}$, so too must be the $\K_{g,K}$. Furthermore, the dual of $\K_{g}$ must be $\L_{t,\mu} g$, whose spectrum is the Ruelle-Pollicott resonances of $f$ with weight $\log (g|f'|^{-1})$ if $g>0$. \red{check and cite} 	
	
The dual of $\K: \Hd^2_{-t} \circlearrowleft$ is $\L_\mu: \Hd^2_t \circlearrowleft$, {\rev from Proposition~\ref{p:DualGeneral}}, \rev{which therefore inherits $\K$'s compactness from Theorem~\ref{t:Deterministic}}. This gives us that
$$ \int \varphi\, \psi\circ f^n\,\dd\mu = \int \L_\mu^n \varphi\, \psi\,\dd \mu = \sum_{j=1}^J \int_\T \L_\mu^n \Pi_j \varphi\, \psi\,\dd\mu + o(\lambda_J^n)$$
as $n\to \infty$, where $\Pi_j$ are the spectral projections onto the $\lambda_j$-generalised eigenspace, \rev{i.e., $\ker (\L_\mu - \lambda_j I)^{M_j}$ where $M_j$ is the algebraic multiplicity of $\lambda_j$}. This means that $\int_\T \L_\mu^n \Pi_j \varphi\, \psi\,\dd\mu = \mathcal{O}(n^{M_j} \lambda_j^n)$. Note that since $\sigma(\L_\mu) = \sigma(\L_1)$, these are the same eigenvalues we would get if we were considering correlations against Lebesgue measure or the physical measure of $f$, so they are the Ruelle-Pollicott resonances in the usual sense.

	\rev{We now prove the convergence result. We know that the right (generalised) eigenfunctions of  $\K_K = \P_K \K\P_K$ and $\P_K\K$ must both lie in the image of $\P_K$. Hence, since the action of $\P_K$ on its image is the identity and this image is finite-dimensional, we know $\K_K$ and $\P_K \K$ must therefore have the same generalised right eigenspaces, and the same spectrum.  (Their left eigenfunctions are actually also matched in as much as $a$ is a left eigenfunction of $\P_K\K$ if and only if $\P_K a$ is a left eigenfunction of $\K_K$.) Similarly, $\K_K$ and $\K\P_K$ agree in spectrum and left eigenfunctions.}
	
	\rev{We can then prove convergence of right eigenfunctions of $\K_K$ to $\K$ by applying \cite[Theorem~1]{Osborn75} to $\P_K \K$ converging to $\K = (\L_\mu)^*$. Similarly, we can get convergence of left eigenfunctions by applying it to $(\K \P_K)^*$ converging to to $\K^* = \L_\mu$. Convergence of spectrum we get from \cite[Theorem 6]{Osborn75}, noting that it doesn't matter which value of $t$ we use to estimate the eigenvalues, and setting $t=\zeta$ therefore gives the optimal bound.}
\end{proof}

\begin{proof}[Proof of Theorem~\ref{t:SimpleSpectral}]
	The projected operators $\{\P_{L}\K\}_{L \in \N}$ are rank $2L-1$ approximations of the full operator $\K$, and using \eqref{eq:KProjDistance},
		$$ \| \P_L \K - \K \|_{\Hd^2_{-t}} \leq 2M^2 C_{\Hd^2} C_{\P} e^{-L(t-u)}. $$
		Similarly, $\{\P_{L} \K\}_{L \leq K}$ are rank-$2L-1$ approximations of another projected operator $\K_K$, and by comparing both against the full operator, we get that
	$$ \| \P_L \K - \K_K \|_{\Hd^2_{-t}} \leq \| \P_L \K - \K \|_{\Hd^2_{-t}} + \| \K_K - \K \|_{\Hd^2_{-t}} \leq 4M^2 C_{\Hd^2} C_{\P} e^{-L(t-u)} $$ 
	for $u \in [\kappa t,t]$,
	so taking infima,
	$$ \| \P_L \K - \K_K \|_{\Hd^2_{-t}},\| \P_L \K - \K \|_{\Hd^2_{-t}} \leq 4M^2 C_{\Hd^2} C_{\P} e^{-L(1-\kappa)t}.$$
	This means the $2L-1$th and $2L$th approximation numbers (and therefore corresponding singular values) of $\P_K \K, \K$ are bounded by the above constant. \rev{\cite[Corollary~5.3]{Bandtlow15} provides uniform bounds on the Hausdorff distance of the spectra of two operators given their approximation numbers, and gives us that} we have that the Hausdorff distance between $\sigma(\P_K\K), \sigma(\K)$ are bounded by $Ce^{-c\sqrt{K}}$ for some $C,c>0$.
	
	To prove the rest of the theorem, we apply Corollary~\ref{c:Spectrum} when the $\Pi_j$ are rank-one. That is, recalling that $\beta_j(\psi) := \int \psi b_j\,\dd x$, we can write $\Pi_j = \rev{a}_j \beta_j$ with $\beta_j \rev{a}_j = 1$, and therefore 
	\begin{align*}\L_\mu^n \rev{a}_j &= \lambda_j^n \rev{a}_j\\
	\rev{\beta_j \L_\mu^n} &= \rev{\lambda_j^n \beta_j}, \end{align*}
	\rev{which by duality gives that $a_j$ and $\beta_j$ are respectively left and right eigenfunctions of $\K$.} Note that the constant $h = 1 - \kappa$.
\end{proof}

\appendix

\section{Proofs of transfer operator bounds}

In this section, we prove various tighter bounds on the transfer operator, specifically in $\Hd^p_u \to \Hd^p_t$ for $p \in \{1,2,\infty\}$. We will actually prove results for an extension of the transfer operator in these spaces to more general spaces of harmonic functions. For $t>0$ and $p \in [1,\infty]$ us define the spaces
\begin{equation*}
	\Ad^p_t = \{ \varphi: \overline{\T_\zeta} \to \C : (\tfrac{\partial^2}{\partial x^2} + \tfrac{\partial^2}{\partial y^2}) \varphi(x+iy) = 0; \| \varphi|_{\partial \T_\zeta}\|_{L^p} < \infty \}
\end{equation*}
with the norm
\begin{equation*}
	\| \varphi \|_{\Ad^p_t} =  \| \varphi|_{\partial \T_\zeta}\|_{L^p} = \begin{cases} \frac{1}{4\pi} \int_\T |\varphi(x+it)|^p + | \varphi(x-it)|^p\, \dd x, & p < \infty \\ \sup_{z\in\partial\T_\zeta} |\varphi(z)|. \end{cases}
\end{equation*}
Harmonicity means that a function in $\Ad^p_t$ is uniquely determined by its values on the boundary: in particular from any function on $\overline{\T_\zeta}$ with an $L^p$ restriction to $\partial \T_\zeta$, we can construct a harmonic function $\mathcal{I}_t\varphi \in \Ad^p_t$ that matches $\varphi$ on the boundary using a kernel operator
\begin{equation}\label{eq:IKernelDef}
\mathcal{I}_t\varphi(x+iy) = \int_\T (k_t(\theta + it - z) \varphi(\theta + it) + k_t(\theta + it - z) \varphi(\theta - it))\,\dd \theta
\end{equation}
where
\begin{equation} \label{eq:HarmonicKernelDef}
	k_t(x+iy) = \sum_{n\in\Z} \frac{\pi}{4t} \frac{2 \cot \tfrac{\pi y}{4t} \sech^2 \tfrac{\pi (x+2n)}{4t}}{ \cot^2 \tfrac{\pi y}{4t} + \tanh^2 \tfrac{\pi (x+2n)}{4t}} \geq 0.
\end{equation}
Note that functions in $\Ad^p_t$ are mapped to themselves under $\mathcal{I}_t$.

Since all holomorphic functions are harmonic, the Hardy space $\Hd^p_t$ is a closed subset of $\Ad^p_t$ with the same norm. We can extend the domain of $\L_\mu$ to $\Ad^p_t$ in the obvious way:
\[ (\L_\mu \varphi)(z) = \sum_{j=0}^{w-1} J_\mu(z+2\pi j) \varphi(v(z+2\pi j)) \] 
but the output functions are not necessarily harmonic, i.e. in $\Ad^p_t$. Instead, we will study the following extension of $\L_\mu$, which is closed under harmonic functions:
\begin{equation*}
	\tilde{\L}_{t,\mu} \varphi = \mathcal{I}_t \L_\mu \varphi = \mathcal{I}_t \left[  \right],
\end{equation*}
and it is this operator we will prove is bounded on $\Ad^p_t \to \Ad^p_u$ for the different $p$. Note that for any $\varphi \in \Ad^p_t$,
	\begin{equation} \| \tilde{\L}_{t,\mu} \varphi \|_{\Ad^p_t} = \|(\mathcal{I}_t\L_\mu \varphi)|_{\partial \T_\zeta}\|_{L^p}  = \|(\L_\mu \varphi)|_{\partial \T_\zeta}\|_{L^p},  \label{eq:BoundaryPrinciple}\end{equation}
even though $\L\mu_\varphi$ may not even be harmonic, simply because $\mathcal{I}_t$ matches functions on the boundary.

\begin{proof}[Proof of Proposition~\ref{p:GoodLInfBound}]
	We in fact prove this bound for $\tilde\L_{t,\mu}: \Ad^\infty_u \to \Ad^\infty_t$.
	
	From \eqref{eq:BoundaryPrinciple} we have for $\varphi \in \Ad^p_u$ that 
		\begin{align*} \| \tilde{\L}_{t,\mu} \varphi \|_{\Ad^\infty_t} &= \sup_{z \in \T_t} |(\L_\mu \varphi)(z)|\\
	&= \sup_{z \in \T_t} \sum_{j=0}^{w-1} J_\mu(z+2\pi j) \varphi(v(z+2\pi j).  \end{align*}
	Now, from its definition in \eqref{eq:JMuDef}, $|J_\mu(z + 2\pi j)| \leq M^2 | v'(z+2\pi j)| $. Standard application of our H\"older distortion assumption on $v$ implies that 
	\begin{align*} |v'(z+2\pi j)| %&= \frac{1}{2\pi} \int_{\Re z-\pi}^{\Re z + \pi} |v'(z + 2\pi j)|\,\dd x \\
		%&\leq  \frac{1}{2\pi}\int_{\Re z-\pi}^{\Re z + \pi} e^{D |z - x|^{\alpha}} |v'(x+2\pi j)| \,\dd x\\
		& \leq e^{D(\pi^2+\zeta^2)^{\alpha/2}} \tfrac{\Leb v([\Re z - \pi + 2\pi j, \Re z + \pi + 2\pi j])}{2\pi}\end{align*}
	so 
	$$\sum_{j=0}^{w-1} |J_\mu(z+2\pi j)| \leq M^2 e^{D(\pi^2+\zeta^2)^{\alpha/2}}. $$
	
	 Furthermore, $\Im v(z + 2\pi j) \leq |v(z + 2\pi j) - v(\Re z + 2\pi j)| \leq \kappa^{-1} \Im z \leq \kappa^{-1} t$, so $v(z + 2\pi j) \in \T_{\kappa t} \subset \T_u$, so by the maximum principle
	\[ \| \tilde{\L}_{t,\mu} \varphi \|_{\Ad^\infty_t} \leq e^{D(\pi+\zeta)^{\alpha}} 2\pi M^2 \sup_{z \in \T_u} |\varphi(z)| \leq e^{D(\pi^2+\zeta^2)^{\alpha/2}} 2\pi M^2 \|\varphi\|_{\Ad^\infty_u} \]
	as required.
\end{proof}

\begin{proof}[Proof of Proposition~\ref{p:GoodL1Bound}]
		We in fact prove this bound for $\tilde\L_{t,\mu}: \Ad^1_u \to \Ad^1_t$.
	Note too that we only need consider $u$ smaller than $\zeta$: larger $u$ follows by inclusion.

	Using \eqref{eq:BoundaryPrinciple}, and the fact that the harmonic functions in $\Ad^1_u$ obey $\varphi = \mathcal{I}_u\varphi$, we have that
	\[ \| \tilde{\L}_{t,\mu} \varphi \|_{\Ad^1_u}  \leq \sum_{\pm_a, \pm_b} \sum_{j=0}^{w-1} \int_\T \int_\T |J_\mu'(x+2\pi j \pm_b it)| k_u(\theta \pm_a iu - v(x\pm_b +2\pi j + it)) |\varphi(\theta + iu)|\,\dd \theta\,\dd\omega. \]
	This bound of $\tilde \L_{t,\mu}$ by a kernel implies that
	\begin{align} \| \tilde \L_{t,\mu} \|_{\Ad^1_t \to \Ad^1_u} &\leq \sup_{\theta \in \T, \pm_a} \sum_{\pm_b} \sum_{j=0}^{w-1} \int_\T |J_\mu'(x+2\pi j \pm_b it)| k_u(\theta \pm_a iu - v(x\pm_b +2\pi j + it))\,\dd\omega \notag \\
	&\leq M^2  \sup_{\theta \in \T} \sum_{j=0}^{w-1} \sum_{\pm} \int_\T |v'(x + 2\pi j + it)| k_u(\theta \pm iu - v(x +2\pi j + it))\,\dd\omega \notag \\
	&\leq M^2 \sup_{\theta \in \T} \sum_{\pm} \int_{v(w\T + it)} k_u(\theta \pm iu - z)\, |\dd z| \label{eq:LineIntegral}
	\end{align}
	where we used the symmetry of $v$ under conjugation and bounds on $\md$, followed by a change of variables.
	
	On $\T_u$ we can bound
	\begin{equation} k_u(x + iy) \leq C_u + \frac{2 y}{x^2 + y^2}. \label{eq:KernelBound} \end{equation}
	for some $C_u$ increasing in $u$. Note that $k_u$ blows up near $0$ and is not integrable along certain curves (e.g. $y = |x|$, which maximises $k_u$ for fixed $x$), but has constant integral along lines of constant $y$. We will show that when our curve $v(w\T + it)$ is close enough to $x+iy = 0$, it must be locally close to a line of constant $y$.
	
	If $v'>0$ on $\T$, then $|\arg v'(x+it)| = \Im \log v'(x+it) \leq D\zeta^\alpha \leq \tfrac{\pi}{3}$, so $|v'(x+it)| \leq 2|\Re v'(x+it)|$, we have that $v(w\T + it)$ on the complex plane identified with $\R^2$ can be written as a graph of a function $V: \T \to [-\kappa t,\kappa t]$ with $|V'| \leq \sqrt{3}$. A similar thing holds when $v' < 0$ on $\T$. This means that
	\begin{equation}\label{eq:KernelGraphIntegral}
		 \int_{v(w\T + it)} k_u(\theta \pm iu - z) = \int_{\T} k_u(\theta \pm iu - (x + iV(x))) \sqrt{1 + V(x)^2}\,\dd x \leq 2\int_{\T} k_u(\theta \pm iu - (x + iV(x)))\,\dd x 
	\end{equation}
	
	Let us consider separately two parts of our curve: the set $X = \{x \in [\theta-2u,\theta+2u] : |V(x)| > u/2\}$, and the remainder $\T \backslash X$.  Using \eqref{eq:KernelBound} on $\T \backslash X$, we can bound
	\[ k_u(\theta \pm iu - (x+iV(x))) \leq C_u + \begin{cases} \frac{u}{(x-\theta)^2 + (u/2)^2},& |x-\theta| < u/2 \\ \frac{1}{x-\theta}, & |x-\theta| \in [u/2,2u]\\ \frac{4u}{(x-\theta)^2+4u^2}, & |x-\theta| > 2u. \end{cases}\]
	 This means that the $\T \backslash X$ contribution to \eqref{eq:KernelGraphIntegral} gives
	\begin{equation*}\label{eq:TNotXPart}
		 \int_{\T\backslash X} k_u(\theta \pm iu - (x + iV(x)))\,\dd x \leq |\T\backslash X| C_u + \pi + \log 4. 
	\end{equation*}

	On the other hand for $x \in X$,
	\[ u \leq \Im v(x+it) \leq \int_0^t |v'(x+i\tau)|\,\dd\tau \leq t e^{D \zeta^\alpha} |v'(x+it)| \]
	by H\"older distortion on $\T_\zeta$, so for $x \in X$,
	\[ |v'(x+it)|^{-1} \leq 2t e^{D \zeta^\alpha}/u \leq 2\zeta e^{D\zeta^\alpha}/u.  \]
	As a result, the direction of the curve $v(wX + it)$ varies as an $\alpha$-H\"older function of arc-length with constant $D (2\kappa^{-1} e^{D \zeta^\alpha})^\alpha$, and so $V' = \tan \arg v'$ has an $\alpha$-H\"older constant $4 D (2\kappa^{-1} e^{D \zeta^\alpha}/u)^\alpha =: K^\alpha u^{-\alpha}$. 
	
	Now let us consider the curve $x+iV(x)$ for $x$ close to $\theta$. Since $V$ must lie in $[-u,u]$, it must reach a maximum (resp. minimum) in $[-u,u]$, and so
	\begin{equation}
	|V'(\theta)| \leq K' \left( \tfrac{u-V(\theta)}{u}\right)^{\alpha/(\alpha+1)} \label{eq:CurveGradientBound}
	\end{equation} 
	with $K' = K \tfrac{\alpha}{\alpha+1}$. Taylor approximation gives us that 
	\begin{equation} | \tfrac{V(x)- V(\theta)}{u} - V'(\theta) \tfrac{x - \theta}{u} | \leq K''  |\tfrac{x - \theta}{u}|^{1+\alpha}. \label{eq:HolderDirectionBound} \end{equation}
	with $K'' = \frac{1}{1+\alpha} K^\alpha$. Combining the previous two equations we get that
	\begin{equation}
			K' \left(\tfrac{u-V(\theta)}{u}\right)^{\tfrac{\alpha}{\alpha+1}} (x-\theta) - K'' |\tfrac{x - \theta}{u}|^{1+\alpha} \leq \tfrac{V(x) - V(\theta)}{u} \leq K' \left(\tfrac{u-V(\theta)}{u}\right)^{\tfrac{\alpha}{\alpha+1}} (x-\theta) + K'' |\tfrac{x - \theta}{u}|^{1+\alpha}
	\end{equation} 
Noting the scaling of this equation with $u$, the fact that the length of $X$ must be less than $2u$, and the bound for $k_u$ in \eqref{eq:KernelBound}, we find there exists a uniform bound 
\[ \int_{\T\backslash X} k_u(\theta \pm iu - (x + iV(x)))\,\dd x \leq |X| C_u + C'  \]
for $C'$ independent of $u$. Substituting this and \eqref{eq:TNotXPart} into \eqref{eq:KernelGraphIntegral} we get
\[   \int_{v(w\T + it)} k_u(\theta \pm iu - z)  \leq 2\pi C_u + \pi + \log 4 + C' \leq 2\pi C_{\zeta} + \pi + \log 4 = C''. \]
Substituting this into \eqref{eq:LineIntegral} gives the required uniform bound.
%	
%	When $|\theta \pm iu - z|$ is greater than $K''$ we can bound the integrand in \ref{eq:LineIntegral} by $ C_u  + 2/K''$ using \eqref{eq:KernelBound}. On the other hand, when it isn't, we can apply \eqref{eq:HolderDirectionBound} with \eqref{eq:CurveGradientBound} to restrict the possible region that $y(x)$ can be in, and obtain that 
%	\[ \int_{v(w\T + it)} \mathbb{1}_{|\theta \pm iu - z| \leq K''} k_u(\theta \pm iu - z)\, |\dd z| \leq C''' \]
%	for some $C'''$ depending on $K'', K^\alpha D, C_u$. This yields the necessary result.
\end{proof}

We will need the following result to relate different $\Ad^p$ spaces with each other, allowing us to prove Theorem~\ref{t:GoodL2Bound}:
\begin{lemma}
	For $t \in [0,\zeta]$, $u > \kappa t$, and $p,p' \in [1,\infty]$, $\tilde{\L}_{t,\mu}: \Ad^p_u \to \Ad^{p'}_t$ is bounded.
\end{lemma}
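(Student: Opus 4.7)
The plan is to reduce to the $\Ad^\infty_{u'} \to \Ad^\infty_t$ bound from Proposition~\ref{p:GoodLInfBound} by sandwiching the operator between two standard embeddings. Since $u > \kappa t$, I would pick any $u' \in (\kappa t, u)$ and factor the operator as
\[ \Ad^p_u \hookrightarrow \Ad^\infty_{u'} \xrightarrow{\tilde\L_{t,\mu}} \Ad^\infty_t \hookrightarrow \Ad^{p'}_t. \]
The middle map is bounded by Proposition~\ref{p:GoodLInfBound} since $u' > \kappa t$. The right inclusion is immediate: because $\partial \T_t$ has total Lebesgue measure $4\pi$ and the $\Ad^{p'}_t$ norm carries a factor of $1/(4\pi)$, we have $\|\varphi\|_{\Ad^{p'}_t} \leq \|\varphi\|_{\Ad^\infty_t}$ for every $p'\in[1,\infty]$.

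The only step requiring real attention is the first inclusion $\Ad^p_u \hookrightarrow \Ad^\infty_{u'}$, which is the usual interior regularity of harmonic functions. Since every $\varphi \in \Ad^p_u$ agrees with its Poisson extension $\mathcal{I}_u\varphi$ on $\T_u$, H\"older's inequality applied to the kernel representation \eqref{eq:IKernelDef} yields, for any $z \in \overline{\T_{u'}}$,
\[ |\varphi(z)| \leq \bigl(\|k_u(\cdot + iu - z)\|_{L^q(\T)} + \|k_u(\cdot - iu - z)\|_{L^q(\T)}\bigr)\,\|\varphi\|_{\Ad^p_u}, \]
with $q$ conjugate to $p$. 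From the explicit formula \eqref{eq:HarmonicKernelDef} the $\sech^2$ factor gives exponential decay in $n$, making the series uniformly convergent on compact subsets of the open strip; and for $z \in \overline{\T_{u'}}$ the arguments $\theta \pm iu - z$ have imaginary parts of modulus at least $u - u' > 0$, keeping us bounded away from the singularity. Hence $\|k_u(\cdot\pm iu - z)\|_{L^q(\T)}$ is bounded uniformly in $z \in \overline{\T_{u'}}$, delivering the required embedding.

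The main (minor) obstacle is therefore just verifying this uniform kernel bound, which reads straight off \eqref{eq:HarmonicKernelDef}. No interpolation machinery is needed, and in particular Proposition~\ref{p:GoodL1Bound} plays no role in this lemma; it enters instead through Theorem~\ref{t:GoodL2Bound} elsewhere.
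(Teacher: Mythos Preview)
Your proof is correct and follows essentially the same route as the paper's own argument: factor $\tilde\L_{t,\mu}$ through an $\Ad^\infty$ space using the Poisson-kernel interior regularity embedding on the left, Proposition~\ref{p:GoodLInfBound} in the middle, and the trivial $L^\infty \hookrightarrow L^{p'}$ boundary inclusion on the right. The only cosmetic difference is that the paper takes the intermediate strip half-width to be $\kappa t$ itself (Proposition~\ref{p:GoodLInfBound} allows the non-strict endpoint $u \geq \kappa t$), whereas you introduce an auxiliary $u' \in (\kappa t, u)$; both choices work since the hypothesis $u > \kappa t$ is strict.
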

\begin{proof}
	Proposition~\ref{p:GoodLInfBound} tells us that $\tilde\L_{t,\mu}$ is bounded $\Ad^\infty_{\kappa t} \to \Ad^\infty_t$. We then need to resolve the integrability parameters.
	
	The boundedness of the kernel $k_u(x+iy)$ for $y > 0$ via \eqref{eq:KernelBound} means that, by the definition of  \eqref{eq:IKernelDef}, $\mathcal{I}_u$ is bounded $\Ad^p_u \to \Ad^{\infty}_{\kappa t}$, and therefore so is function inclusion. On the other hand, $L^p$ inclusions on the boundary gives a bounded inclusion from $\Ad^{p'}_t \to \Ad^\infty_t$.
\end{proof}

\begin{proof}[Proof of Theorem~\ref{t:GoodL2Bound}]
	The spaces $\Ad^2_t$ are Hilbert spaces with the $L^2(\partial \T_\zeta)$ inner product.
%	with inner product given by
%	\begin{equation} \langle \psi, \varphi \rangle_{\zeta} = \lim_{\beta \to \zeta^-} \frac{1}{4\pi} \int_\T (\bar\psi \varphi)(\theta+i \beta) + (\bar\psi\varphi)(\theta - i\beta)\,\dd\theta. \label{eq:HardyInnerProduct}\end{equation}
	As a result, for any $t \in [0,\zeta]$, $u > \kappa t$ there exists an operator $\mathcal{J}_{\mu}^{u,t}: \Ad^2_t \to \Ad^2_u$ adjoint to $\tilde{\L}_{t,\mu}: \Ad^2_u \to \Ad^2_t$. We can then say that 
	\[ \| \tilde{\L}_{t,\mu} \|_{\Ad^2_u \to \Ad^2_t}^2 = \| \tilde{\L}_{t,\mu} \mathcal{J}^{u,t}_\mu \|_{\Ad^2_u}, \]
	and in fact by self-adjointness of $\tilde{\L}_{t,\mu} \mathcal{J}^{u,t}_\mu$ that
	\[ \| \tilde{\L}_{t,\mu} \|_{\Ad^2_u \to \Ad^2_t}^2 = \sqrt[n]{\| (\tilde{\L}_{t,\mu} \mathcal{J}^{u,t}_\mu)^n \|_{\Ad^2_u}}. \]
	Now,
	\[ \| (\tilde{\L}_{t,\mu} \mathcal{J}^{u,t}_\mu)^n \|_{\Ad^2_u} \leq \|\tilde{\L}_{t,\mu}\|_{\Ad^1_t \to \Ad^2_u} \| (\mathcal{J}^{u,t}_\mu \tilde{\L}_{t,\mu}) \|_{\Ad^1_t}^{n-1} \| \mathcal{J}^{u,t}_\mu \|_{\Ad^2_u \to \Ad^1_t}. \]
	Since $\Ad^2_t$ has a bounded inclusion into $\Ad^1_t$, $ \mathcal{J}^{u,t}_\mu$ is bounded $\Ad^2_u \to \Ad^1_t$; by the previous lemma, $\tilde{\L}_{t,\mu}$ is bounded $\Ad^1_t \to \Ad^2_u$. All the operators in the above expression are therefore bounded, and in particular for some $C$,
	\begin{equation}
		\| \tilde{\L}_{t,\mu} \|_{\Ad^2_u \to \Ad^2_t}^2 \leq \inf_{n\geq 1} \sqrt[n]{C \| ( \mathcal{J}^{u,t}_\mu \tilde{\L}_{t,\mu}) \|_{\Ad^1_t}^{n-1}} =  \| \mathcal{J}^{u,t}_\mu \tilde{\L}_{t,\mu} \|_{\Ad^1_t}. \label{eq:L2IntoL1}
	\end{equation}
	
	Suppose then that $\varphi \in \Ad^2_t$, and so $\mathcal{J}^{u,t}_\mu \tilde{\L}_{t,\mu} \varphi$ is as well. Defining $\omega = \mathcal{I}_t\left[\tfrac{\mathcal{J}^{u,t}_\mu \tilde{\L}_{t,\mu} \varphi}{|\mathcal{J}^{u,t}_\mu \tilde{\L}_{t,\mu} \varphi|}\right] \in \Ad^\infty_\mu$ we have that almost everywhere on the boundary of $\T_t$,
	\[ \bar{\omega} \mathcal{J}^{u,t}_\mu \tilde{\L}_{t,\mu} \varphi = |\mathcal{J}^{u,t}_\mu \tilde{\L}_{t,\mu} \varphi|. \]
	
	We then have
	\begin{align*} \| \mathcal{J}^{u,t}_\mu \tilde{\L}_{t,\mu} \varphi \|_{\Ad^1_t} &= \frac{1}{4\pi} \int_\T (\bar\omega \mathcal{J}^{u,t}_\mu \tilde{\L}_{t,\mu} \varphi)(\theta + i\zeta) + (\bar\omega \mathcal{J}^{u,t}_\mu \tilde{\L}_{t,\mu} \varphi)(\theta - i\zeta) \,\dd\theta\\
	& = \langle \omega, \mathcal{J}^{u,t}_\mu \tilde{\L}_{t,\mu} \varphi \rangle_{\Ad^2_t} = \langle \tilde{\L}_{t,\mu} \omega, \tilde{\L}_{t,\mu} \varphi \rangle_{\Ad^2_t} \\ 
	& =\frac{1}{4\pi} \int_\T (\mathcal{L}_\mu \omega\, \tilde{\L}_{t,\mu} \varphi)(\theta + i\zeta) + (\mathcal{L}_\mu \omega\, \tilde{\L}_{t,\mu} \varphi)(\theta - i\zeta) \,\dd\theta\\
	& \leq \| \mathcal{L}_\mu \|_{\Ad^\infty_t} \frac{1}{4\pi} \int_\T (\tilde{\L}_{t,\mu} \varphi)(\theta + i\zeta) + (\tilde{\L}_{t,\mu} \varphi)(\theta - i\zeta) \,\dd\theta\\
	& \leq \| \mathcal{L}_\mu \|_{\Ad^\infty_t} \| \mathcal{L}_\mu \|_{\Ad^1_t} \| \varphi\|_{\Ad^1_t} 
	\end{align*}
	Applying Propositions~\ref{p:GoodLInfBound} and~\ref{p:GoodL1Bound} for $\Ad$ spaces, and substituting into \eqref{eq:L2IntoL1}, gives us what we want when $\varphi \in \Ad^2_t$. Since $\Ad^2_t$ is dense in $\Ad^1_t$, we obtain the full result by interpolation. 
	
	We can of course then go back and restrict to looking at $\L_\mu$ on $\Hd^2_u \to \Hd^2_t$.
\end{proof}

% \begin{proposition}\label{p:BadLBound}
% 	There exists a constant $C$ depending on $f$ such that for $t \leq \zeta$ and $u > \kappa t$,
% 	\[ \| \L_1 \|_{\Hd^2_u \to \Hd^2_t} \leq C w \kappa (1 + (u - \kappa t)^{-1/2}). \]
% \end{proposition}
%\begin{proof}
%	First let us note that for any $z \in \R+i[-\zeta,\zeta]$,
%	\[ |\Im v(z)| \leq \int_0^{\Im z} |\Re v'(\Re z + y)|\, |\dd y| \leq \kappa |\Im z|.\]
%	
%	\begin{align*} \|\L_1 \varphi\|_{\Hd^2_t} &\leq \|\L_1 \varphi\|_{\Hd^\infty_t}  \\
%	&\leq \sup_{z \in \T_t} \sum_{j=0}^{w-1} |v'(z+2\pi j)| |\varphi(v(z+2\pi j))|\\
%	&\leq \sup_{z \in \T_t} \sum_{j=0}^{w-1} |v'(z+2\pi j)|\, \sup_{z \in v(\R+i[-t,t])} |\varphi(z)| \\
%	&\leq \frac{w}{\kappa^{-1}} \|\varphi\|_{\Hd^\infty_{\kappa t}}
%	\end{align*}
%	since $v(\R+i[-t,t]) \subseteq \R+i[-\kappa t,\kappa t]$. Then, Cauchy's formula gives that there exists a constant $C$ such that $\|\varphi\|_{\Hd^\infty_{\kappa t}} \leq C(1 + (\kappa t-u)^{-1}) \|\varphi\|_{\Hd^2_u}$, leading to the required bound.
%\end{proof}
% 

\subsection*{Acknowledgements}

The author thanks Matthew Colbrook for inspiring discussion and his comments on the manuscript.

This research has been supported by the European Research Council (ERC) under the European Union's Horizon 2020 research and innovation programme (grant agreement No 787304).

\bibliographystyle{plain}
\bibliography{edmd}

\end{document}